\newcommand{\E}{\mathbb{E}}
\renewcommand{\P}{\mathbb{P}}
\newcommand{\cf}{\mathcal{F}}
\newcommand{\cd}{\mathcal{D}}
\newcommand{\ce}{\mathcal{E}}
\newcommand{\cg}{\mathcal{G}}
\newcommand{\cj}{\mathcal{J}}
\newcommand{\cm}{\mathcal{M}}
\newcommand{\ct}{\mathcal{T}}
\newcommand{\cs}{\mathcal{S}}
\newcommand{\Z}{\mathbb{Z}}
\newcommand{\N}{\mathbb{N}}
\newcommand{\R}{\mathbb{R}}
\newcommand*{\wih}{\widehat}
\newcommand{\cl}{\mathcal{L}}
\newcommand*{\bo}{\boldsymbol}
\newcommand*{\wt}{\widetilde}
\newcommand*{\un}{\underline}
\newcommand*{\ov}{\overline}
\newcommand{\ca}{\mathcal{A}}
\newcommand{\cb}{\mathcal{B}}
\newcommand{\cc}{\mathcal{C}}
\newcommand{\ci}{\mathcal{I}}
\newcommand{\ck}{\mathcal{K}}
\newcommand{\cn}{\mathcal{N}}
\newcommand{\red}{\textcolor{red}}
\newcommand{\vep}{\varepsilon}
\newcommand*{\msf}{\mathsf}
\newcommand{\green}{\textcolor{green}}
\newcommand*{\m}{\mathbbm}
\def\l{\left}
\def\r{\right}
\def\<{\langle}
\def\>{\rangle}
\DeclareMathOperator{\sech}{sech}
\newtheorem{theorem}{Theorem}[section]
\newtheorem{corollary}[theorem]{Corollary}
\newtheorem{lemma}[theorem]{Lemma}
\newtheorem{proposition}[theorem]{Proposition}
\newtheorem{claim}[theorem]{Claim}
\theoremstyle{remark}
\newtheorem{remark}{\bf Remark}
\numberwithin{equation}{section}
\newcommand{\ptoline}[1]{
    \begin{tikzpicture}[baseline=-0.6ex,thick,scale=0.2]
        \draw (0,0) -- (1,0) -- (0,1) -- cycle; 
            \end{tikzpicture}^{#1} 
}
\newcommand{\linetop}[1]{
    \begin{tikzpicture}[baseline=-0.6ex,thick,scale=0.2]
        \draw (1,1) -- (1,0) -- (0,1) -- cycle; 
            \end{tikzpicture}^{#1} 
}
\newcommand{\ptohalfspace}[1]{
    \begin{tikzpicture}[baseline=-0.6ex,thick,scale=0.2]
        \draw (1,1) -- (0,1) -- (0,0) -- cycle; 
            \end{tikzpicture}^{#1} 
}
\newcommand{\ptop}[1]{
    \begin{tikzpicture}[baseline=-0.6ex,thick,scale=0.2]
        \draw (1,1) -- (0,1) -- (0,0)-- (1,0) -- cycle; 
            \end{tikzpicture}^{#1} 
}
\title[Tail estimates in $\beta$ ensembles and last passage percolation]{Optimal tail estimates in $\beta$-ensembles and applications to last passage percolation}
\author[Baslingker, Basu, Bhattacharjee, Krishnapur]{Jnaneshwar Baslingker, Riddhipratim Basu, \\ Sudeshna Bhattacharjee, Manjunath Krishnapur}
\begin{document}
\begin{abstract}
    Hermite and Laguerre $\beta$-ensembles are important and well studied models in random matrix theory with special cases $\beta=1,2,4$ corresponding to eigenvalues of classical random matrix ensembles. It is well known that the largest eigenvalues in these, under appropriate scaling, converge weakly  to the Tracy-Widom $\beta$ distribution whose distribution function $F_{\beta}$ has asymptotics given by $1-F_{\beta}(x)=\exp\l(-\frac{2\beta}{3}(1+o(1))x^{3/2}\r)$ as $x\to \infty$ and $F_{\beta}(x)=\exp\l(-\frac{\beta}{24}(1+o(1))|x|^3\r)$ as $x\to -\infty$. Although tail estimates for the largest eigenvalues with correct exponents have been proved \cite{LR10, BGHK21} for the pre-limiting models, estimates with matching constants had not so far been established for general $\beta$; even in the exactly solvable cases, some of the bounds were missing. In this paper, we prove upper and lower moderate deviation estimates for both tails with matching constants.
    
    We illustrate the usefulness of these estimates by considering certain questions in planar exponential last passage percolation (LPP), a well-studied model in the KPZ universality class in which certain statistics have same distributions as largest eigenvalues in Laguerre $\beta$-ensembles (for $\beta=1,2,4$). Using our estimates in conjunction with a combination of old and new results on the LPP geometry, we obtain three laws of iterated logarithm including one which settles a conjecture from \cite{L18}. We expect that the sharp moderate deviation estimates will find many further applications in LPP problems and beyond. 
    
\end{abstract}
\address{Jnaneshwar Baslingker,  Department of Mathematics, Indian Institute of Science, Bangalore, India}
\email{jnaneshwarb@iisc.ac.in}
\address{Riddhipratim Basu, International Centre for Theoretical Sciences, Tata Institute of Fundamental Research, Bangalore, India} 
\email{rbasu@icts.res.in}
\address{Sudeshna Bhattacharjee, Department of Mathematics, Indian Institute of Science, Bangalore, India}
\email{sudeshnab@iisc.ac.in }
\address{Manjunath Krishnapur, Department of Mathematics, Indian Institute of Science, Bangalore, India}
\email{manju@iisc.ac.in}
\maketitle

\tableofcontents

\section{Introduction and statements of main results} 
The study of random matrices with i.i.d.\ (real or complex) Gaussian entries go back to almost 100 years. The point processes formed by the eigenvalues of such matrices (GUE/GOE) and their counterparts for the corresponding Wishart matrices (LUE/LOE) are important and extensively studied. It is known that these eigenvalue ensembles are special cases for a more general point process called the $\beta$-ensembles (for a real parameter $\beta>0$), which, starting from the work of Dyson in the 60s, have become an important topic of study over the last decades \cite{RRV, ES07, DE, KJ, BF97, KN04} (see also \cite{AGZ09} for an overview of $\beta$-ensembles). Hermite, Laguerre and Jacobi $\beta$-ensembles are the most well-studied; in this paper we shall only consider the Hermite and Laguerre $\beta$-ensembles. The special cases $\beta=1,2,4$ are exactly solvable and correspond to the eigenvalues of some matrix models (G(O/U/S)E respectively for the Hermite case and L(O/U/S)E respectively for the Laguerre case) and in general are more well studied (see \cite{Dy63, TW94}). Following the influential work of Dumitriu and Edelman \cite{DE} where tridiagonal matrix models for Hermite and Laguerre $\beta$-ensembles were introduced, these sparse matrix models have proved to be extremely useful for the study of these ensembles. 


In this paper, our topic of interest is the largest eigenvalue of Hermite and Laguerre $\beta$-ensembles. It was shown in a seminal paper of Ram\'irez, Rider and Vir\'ag \cite{RRV} (following heuristic approach outlined in \cite{ES07}) that the largest eigenvalue of both the Hermite and Laguerre $\beta$-ensembles (under appropriate normalisation) converge weakly to a distribution known as the Tracy-Widom $\beta$ distribution, which contains the classical (GUE) Tracy-Widom distribution as the $\beta=2$ special case. The tail behaviour of the Tracy-Widom distribution was also determined in \cite{RRV} where they showed that its distribution function $F_{\beta}$ satisfies $1-F_{\beta}(x)=\exp(-\frac{2\beta}{3}(1+o(1))x^{3/2})$ as $x\to \infty$ and $F_{\beta}(x)=\exp(-\frac{\beta}{24}(1+o(1))|x|^3)$ as $x\to -\infty$. It is natural to expect, and important in many applications, that the pre-limiting (centred and scaled) largest eigenvalues also show similar tail behaviour. Although such estimates have been shown with the correct exponents $x^{3/2}$ for the upper tail and $x^{3}$ for the lower tail in \cite{LR10} (see also \cite{BGHK21} for the lower bound for lower tail in the Laguerre case), tail estimates with the optimal constants have been missing so far, although there have been some upper tail estimates for the cases $\beta=1,2$ (see e.g.\ \cite{PZ17,L07,BX23, BBF22} and the references therein). In this paper (Theorems \ref{t: right tail upper bound for beta geq 2}, \ref{t: rtub beta geq 1}, \ref{t: rtlb}, \ref{t: ltub}, \ref{t: ltlb}) we establish the optimal moderate deviation estimates for general Hermite and Laguerre $\beta$-ensembles for different parameter ranges.

Tracy-Widom distributions are extremely important even beyond random matrix theory; in the theory of the Kardar-Parisi-Zhang (KPZ) universality class, 
the classical Tracy-Widom laws play a role akin to what the Gaussian distribution plays in theory of sums of independent random variables. They arise as universal scaling limit in (1+1)-dimensional KPZ models. In fact, there are also pre-limiting correspondences leading to distributional equalities between finite size statistics in certain KPZ models and largest eigenvalues in $\beta$-ensembles. It is therefore expected that our optimal tail estimates for the latter will find numerous applications in the study of the KPZ class. We illustrate this with an application: using the correspondence between the largest eigenvalues of the Laguerre $\beta$-ensembles (for $\beta=1,2,4$) and various passage times in exponential last passage percolation (LPP) on $\Z^2$, a canonical exactly solvable model in the KPZ class, together with the optimal tail estimates we prove for the former, we prove (see Theorem \ref{t: LILs}) three laws of iterated logarithm (LIL) for the passage times in exponential LPP. This includes a law of iterated logarithm for point-to-point passage times in LPP which was conjectured in \cite{L18, BGHK21} where, using the weaker estimates from \cite{LR10,BGHK21} a law of iterated logarithm was established without explicitly evaluating the limiting constants. Our result provides a complete resolution to the problem. We point out here that these results (especially the $\liminf$ bounds in the laws of iterated logarithm) are not immediate corollaries of the sharp moderate deviation estimates (as was already noted in \cite{L18,BGHK21}) and require geometric inputs from the last passage percolation. Our proofs use a combination of a few known results about the geometry of geodesics in exponential LPP models as well as a couple of new results which might be of independent interest. We shall also mention a number of other applications of our moderate deviation works which will be covered in upcoming works. 

We now move towards formulating precise statements of our results.


\subsection{Moderate deviations in Hermite and Laguerre $\beta$-ensembles}

For any real $\beta>0$, consider the probability density function of $\lambda_1\geq \lambda_2\geq\dots\geq\lambda_n\in \mathbb{R}$ given by 
\begin{align}\label{Hermite density}
\mathbb{P}_{n}^{\beta}\l(\lambda_1,\lambda_2,\dots,\lambda_n\r)=\frac{1}{Z_n^{\beta}}\prod_{k=1}^{n}e^{-\frac{\beta}{4}\lambda_k^2}\times\prod\limits_{j<k}\left|\lambda_j-\lambda_k\right| ^{\beta}. 
\end{align}

For $\beta=1,2$ or $4$, this distribution corresponds to the joint density of eigenvalues of $n\times n$ Gaussian orthogonal, unitary or symplectic matrices, G(O/U/S)E of (for general $\beta$ this density is also known as Hermite $\beta$-ensemble) random matrix theory.

For $m\in\mathbb{R}$ with $m>n-1$, consider the following joint density of $\lambda_1\geq\lambda_2\geq \dots\geq\lambda_n\geq 0 $,
\begin{align}\label{Laguerre density}
\mathbb{P}_{n,m}^{\beta}\l(\lambda_1,\lambda_2,\dots,\lambda_n\r)=\frac{1}{Z_{n,m}^{\beta}}\prod_{k=1}^{n}\lambda_k^{\frac{\beta}{2}\l(m-n+1\r)-1}e^{-\frac{\beta}{2}\lambda_k}\times \prod\limits_{j<k}\left|\lambda_j-\lambda_k\right| ^{\beta}.
\end{align}

When $m$ is an integer and $\beta=1,2$ or $4$, this distribution corresponds to the joint density of eigenvalues of  Laguerre orthogonal, unitary or symplectic matrices, L(O/U/S)E (for general $\beta$ the density is known as Laguerre $\beta$-ensemble) of random matrix theory. 

Despite being the focus of research for many years, there were no matrix models for general Hermite $\beta$ and Laguerre $\beta$-ensembles, until the work of 
Dumitriu and Edelman \cite{DE} who gave tridiagonal matrix models for \eqref{Hermite density} and \eqref{Laguerre density}. Define 
\begin{align}\label{eq: tridiag matrix}
 H_{n,\beta}:=\frac{1}{\sqrt{\beta}}\begin{bmatrix}
X_1 & Y_1 & 0 &\cdots 0\\
Y_1 & X_2 & Y_2 & \cdots 0\\
0 & Y_2 & X_3 &\cdots 0\\
\vdots & \vdots & \vdots &\vdots
\end{bmatrix}_{n\times n}
    \hspace{2cm}
B_{n,m,\beta}:=\frac{1}{\sqrt{\beta}}\begin{bmatrix}
D_1 & 0 & 0 &\cdots 0\\
C_1 & D_2 & 0 & \cdots 0\\
0 & C_2 & D_3 &\cdots 0\\
\vdots & \vdots & \vdots &\vdots
\end{bmatrix}_{n\times n}.
\end{align}
$H_{n,\beta}$ 
is the symmetric tridiagonal matrix with $X_i\sim N\l(0,2\r)$ and $Y_i\sim \chi_{\beta\l(n-i\r)}$ (here $Y\sim \chi_{\nu}$ means $Y>0$ and $Y^2\sim \chi_{\nu}^2$) and the entries are independent up to symmetry.
 $B_{n,m, \beta}$ is the bidiagonal matrix  with $D_i\sim \chi_{\beta\l(m-i+1\r)}$ and $C_i\sim \chi_{\beta\l(n-i\r)}$, where $m\in \mathbb{R}$ and $m+1>n$ and all the entries are independent.
Dumitriu and Edelman showed that the $n$ eigenvalues of $H_{n,\beta}$ have the joint law given by \eqref{Hermite density}
 and the eigenvalues of $L_{n,m,\beta}=B^t_{n,m, \beta}B_{n,m, \beta}$ 
 have the joint density given by \eqref{Laguerre density}. In particular, if  $\lambda_1^{\l(n,H,\beta\r)}$ and $\lambda_1^{\l(n,m,L,\beta\r)}$ denote the largest point $\lambda_1$ in \eqref{Hermite density} and \eqref{Laguerre density} respectively, then
 \begin{align}\label{eq: Dimtriu Edelman result}
     \lambda_{\max}\l(H_{n,\beta}\r)\overset{d}{=}\lambda_1^{\l(n,H,\beta\r)}
     \quad   \text{and}\quad\lambda_{\text{max}}\l(L_{n,m, \beta}\r)\overset{d}{=}\lambda_1^{\l(n,m,L,\beta\r)}
 \end{align} 
where $\lambda_{\max}(A)$ denotes the largest eigenvalue of a real symmetric matrix $A$.

Ram\'irez-Rider-Vir\'ag \cite{RRV} showed that the largest eigenvalues of the above random tridiagonal matrices, after suitable shifting and scaling, converge to the point process of eigenvalues of a second order stochastic differential operator called the stochastic Airy operator ($\mbox{SAO}_{\beta}$). In particular, using \eqref{eq: Dimtriu Edelman result}, their results show that
\begin{align}\label{eq: Hermite TW convergence}
    \l(\frac{\lambda_1^{\l(n,H,\beta\r)}}{\sqrt{ n}}-2\r)n^{2/3}&\implies \mbox{TW}_{\beta},
    \\ \label{eq: Laguerre TW convergence}
    {\l(\sqrt{m n}\r)^{1/3}}{\l(\sqrt{m}+\sqrt{n}\r)^{2/3}}&\l(\frac{\lambda_1^{\l(n, m, L,\beta\r)}}{\l(\sqrt{m}+\sqrt{n}\r)^{2}}-1\r)\implies \mbox{TW}_{\beta}.
\end{align}
They also found various properties of  $\mbox{TW}_{\beta}$ (read as Tracy-Widom~$\beta$). Of particular interest to us is the tail behaviour which they found. As $a\rightarrow\infty$,
\begin{align}\label{eq: TW right tail bounds}
    \P\l(TW_{\beta}>a\r)&=\exp\l(-\frac{2}{3}\beta a^{3/2}\l(1+o(1)\r)\r),\\\label{eq: TW left tail bounds}
        \P\l(TW_{\beta}<-a\r)&=\exp\l(-\frac{1}{24}\beta a^{3}\l(1+o(1)\r)\r).
\end{align}
Prior to their work, Baik-Buckingham-DiFranco~\cite{BBD08} had already found much more  precise asymptotics for $\beta=1,2,4$. Although our prime application to last passage percolation only requires these special values of $\beta$, our methods to prove tail bounds for the largest point in Gaussian and Laguerre ensembles are  very close to the methods of \cite{RRV}. They used the  variational characterisation of the smallest eigenvalue of $\mbox{SAO}_{\beta}$, as well as an associated diffusion. In \cite{DV13}, they sharpened the  right tail estimates for general $\beta$ by finding lower order corrections.

The $TW_{\beta}$ distributions (and their pre-limiting variants) also appear in many contexts outside random matrices. For instance, these laws describe the fluctuations of last passage times in percolation \cite{KJ}, longest increasing subsequence of a random permutation \cite{BDJ98} and the current in simple exclusion process \cite{KJ, TW09}. It is therefore natural, and important in many applications, to investigate the tail bounds for $ \lambda_1^{\l(n,H,\beta\r)}$ and $ \lambda_1^{\l(n,m,L,\beta\r)}$. 


We summarise here some of the known results about the finite size tail estimates for statistics of interest that converge to the Tracy-Widom distribution. It was considered in \cite{L04,L07} that the right tail inequality for GUE (also LUE) may be shown to follow from results of Johansson \cite{KJ} which uses sub-additivity arguments and large deviation asymptotics. The optimal right tail bounds in the case of GUE, up to the constants in the exponential tails, were obtained in \cite{PZ17}. For GOE, \cite{BX23} gives the right tail upper bound with optimal constants in a certain parameter regime. The right tail upper bounds, with the optimal constant for last passage model that corresponds to LOE was obtained in \cite{BBF22}. By the superposition-decimation procedure \cite{FR01}, similar bounds for GOE can be also be found (see \cite{Ledoux09}). Precise upper tail estimates are also available for some related positive temperature models in the KPZ universality class, see e.g. \cite{DG23, GH22}.

Even for the exactly solvable cases, the left tail bounds are generally less accessible, often requiring more difficult Riemann-Hilbert analysis. The left tail inequalities for last passage times in planar last passage percolation with geometric weights, which corresponds to largest element in the Meixner ensemble, are proven in \cite{BDMMZ01} using Riemann-Hilbert methods. One might expect a similar analysis to establish these estimates for GUE and LUE as well. In case of passage times in Poissonian LPP, which is obtained by a suitable limit of largest element in Meixner ensemble, optimal estimate for upper tail and lower tail was shown in \cite{LM01,LMS02} using Riemann-Hilbert methods. Coming to the case of Hermite and Laguerre ensembles with general $\beta$, Ledoux and Rider\cite{LR10} provide the tail bounds with the correct exponents $3/2$ in the upper tail and $3$ in the lower tail for general $\beta$ (but not the optimal constant $2\beta/3$ and $\beta/24$). Upper bounds for the right tail and left tails for Hermite and Laguerre ensembles were obtained in \cite[Theorem 1,2]{LR10}. But it was remarked that the lower bounds were obtained only in the Hermite ensemble. The left tail lower bound for Laguerre ensemble (general $\beta$) was obtained in \cite[Theorem 2]{BGHK21}. In \cite{GH23} the 3/2 and 3 exponents (without the optimal constants) for the right tail and left tails was conditionally obtained in last passage percolation models via bootstrapping and geodesic geometry.   

We state the main theorems now. Theorems~\ref{t: right tail upper bound for beta geq 2}-\ref{t: ltlb} essentially assert that the pre-limiting random variables in \eqref{eq: Hermite TW convergence} and \eqref{eq: Laguerre TW convergence} have roughly the same tail probabilities as the  Tracy-Widom distributions. The range of the tail for which we are able to prove this are different for the left and right tails, for the upper and lower bounds etc. All the theorems are valid in the range $1\ll t\ll n^{\delta}$ for some $\delta>0$. The optimal $\delta$ one can expect is $\frac23$, beyond which it passes into the large deviation regime.

\begin{theorem}
\label{t: right tail upper bound for beta geq 2}
For the right tail upper bound we have
    \begin{enumerate}[label=(\roman*), font=\normalfont] 
        \item For any $\beta\geq 2$ and $\varepsilon>0$, we can choose $n_{\varepsilon,\beta} \in \N$ and $t_{\vep,\beta},\gamma\l(\varepsilon\r)>0$ such that for all $n\geq n_{\varepsilon,\beta}$ and for all $t$ with $t_{\vep,\beta} \leq t\leq \gamma(\varepsilon)n^{2/3}$  the following holds 
\begin{align*}
    \P \l( \lambda_1^{\l(n,H,\beta\r)} \geq 2 \sqrt{n}+tn^{-1/6}\r) \leq   \exp\l({-\frac{2\beta}{3}t^{3/2}\l(1-\varepsilon\r)}\r).
\end{align*}
\item For any $\beta \geq 2$ and $\varepsilon>0$ and $M>0$ there exist $n_{\varepsilon,\beta,M} \in \N$ and $t_{\vep,\beta,M},\gamma(\varepsilon,M)>0$ such that for any pair of natural numbers $\l(m,n\r)$ with $m,n \geq 0$ and $ m+1>n$ such that $ \frac{m}{n} \leq M$ and for any $t_{\vep,\beta}\leq t \leq \gamma(\varepsilon,M)n^{2/3}$ and for all $n \geq n_{\varepsilon,\beta,M}$ we have 
        \[
        \P \l(\lambda_{1}^{(n,m,L, \beta)} \geq \l(\sqrt{m}+\sqrt{n} \r)^2+t \l(\sqrt{m n} \r)^{-1/3}\l(\sqrt{m}+\sqrt{n} \r)^{4/3} \r) \leq \exp\l({-\frac{2\beta}{3}t^{3/2}\l(1-\varepsilon\r)}\r).
        \]

    \end{enumerate}
\end{theorem}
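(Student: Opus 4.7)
My plan is to prove the sharp right-tail bound first at $\beta = 2$, where exactly solvable identities are available, and then lift it to all $\beta \geq 2$ via a stochastic-domination transfer. For part (i), I would exploit the Baryshnikov--Gravner--O'Connell representation, which identifies $\lambda_{1}^{(n,H,2)}$ in distribution with the largest Brownian last-passage percolation statistic built from $n$ independent Brownian motions. For part (ii), the classical Johansson identity $\lambda_1^{(n,m,L,2)} \stackrel{d}{=} G(n,m)$ identifies the top eigenvalue of LUE (for integer $m$) with the point-to-point passage time in exponential LPP from $(1,1)$ to $(n,m)$. In each case there is a large deviation principle at scale $n$ whose rate function $I(y)$ satisfies $I(y) = \frac{4}{3} y^{3/2}(1+o(1))$ as $y \downarrow 0$, so that substituting $y = tn^{-2/3}$ gives precisely the Tracy--Widom exponent $\exp(-\frac{4}{3} t^{3/2})$. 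To make this hold uniformly across the moderate-deviations window $1 \ll t \leq \gamma(\vep) n^{2/3}$, I would use a Ledoux-style sub-additive bootstrap based on $G(n+n', m+m') \geq G(n,m) + G'(n',m')$ (with $G'$ an independent copy from disjoint weights) in the LUE case, and its Brownian LPP analogue in the GUE case. The aspect-ratio bound $m/n \leq M$ in part (ii) keeps the LDP constants uniformly bounded.

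\textbf{Extension to $\beta \geq 2$.} With the sharp $\beta = 2$ estimate in hand, I would invoke the stochastic-domination result of \cite{JB23}, which compares the top eigenvalue of the $\beta$-ensemble to that of the $\beta = 2$ ensemble (in both Hermite and Laguerre settings) for $\beta \geq 2$. Pushing the $\beta = 2$ bound through this coupling converts the exponent $\frac{4}{3} t^{3/2}$ into the desired $\frac{2\beta}{3} t^{3/2}$, giving both parts of the theorem after absorbing the comparison constants into $\vep$.

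\textbf{Main obstacle.} The chief difficulty is recovering the \emph{exact constant} $\frac{2\beta}{3}$, not merely the correct $3/2$ exponent. For the $\beta = 2$ case this means pinning down the leading coefficient in the Taylor expansion of $I$ at $y = 0^+$ to be precisely $\frac{4}{3}$, with quantitative control on the error, so that after substituting $y = tn^{-2/3}$ and running the sub-additive bootstrap no multiplicative slack creeps into the exponent uniformly for $1 \ll t \ll n^{2/3}$. The upper cut-off $t \leq \gamma(\vep) n^{2/3}$ in the hypothesis is precisely where the edge expansion of $I$ breaks down and one enters the full large deviation regime; staying below this threshold is essential to secure the sharp constant, and the allowable window of $t$ is what forces the restriction $\gamma(\vep) \to 0$ as $\vep \to 0$.
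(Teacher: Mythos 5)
Your proposal takes essentially the same route as the paper: establish the $\beta=2$ case via the O'Connell--Yor/Johansson identities linking GUE and LUE to Brownian and exponential LPP respectively, use super-additivity together with the large deviation rate function (and its Taylor expansion at the edge, whose leading coefficient is $\tfrac{4}{3}$) to obtain the sharp constant across the moderate-deviation window, and then lift to all $\beta \geq 2$ via the stochastic-domination comparison of \cite{JB23}. Your identification of the main obstacle — pinning down the exact edge coefficient of the rate function and keeping $t \leq \gamma(\vep)n^{2/3}$ so as not to enter the full large-deviation regime — also matches the paper's reasoning.
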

For $1 \leq \beta <2$ also we have the following theorem which is less satisfactory.

\begin{theorem}
    \label{t: rtub beta geq 1}
    For the right tail upper bound in $1 \leq \beta <2$ range we have
    \begin{enumerate}[label=(\roman*), font=\normalfont]
        \item For any $1 \leq \beta <2$ and $\vep>0,$ we can choose $n_{\vep,\beta} \in \N$ and $t_{\vep,\beta}>0$ such that for all $n \geq n_{\vep,\beta}$ and for all $t$ with $t_{\vep,\beta}\leq t \leq n^{1/4}$
        \[
        \P \l( \lambda_1^{\l(n,H,\beta\r)} \geq 2 \sqrt{n}+tn^{-1/6}\r) \leq  \exp\l({-\frac{2\beta}{3}t^{3/2}\l(1-\varepsilon\r)}\r).
        \]
        \item For any $1 \leq \beta <2$ and $\vep>0$ there exists $n_{\vep,\beta}\in \N $ and $t_{\vep,\beta}, \gamma(\vep)>0$ such that for all $n \geq n_{\vep,\beta}$ and for all $t_{\vep,\beta} \leq t \leq \gamma(\vep)n^{2/3}$ 
        \[
        \P \left(\lambda_1^{(n,n+1,L,\beta)} \geq 4n+t2^{4/3}n^{1/3} \right) \leq \exp\l({-\frac{2\beta}{3}t^{3/2}\l(1-\varepsilon\r)}\r).
        \]
    \end{enumerate}
\end{theorem}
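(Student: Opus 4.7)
The plan is to imitate at the pre-limit ($n<\infty$) level the Riccati/variational strategy of \cite{RRV}, working directly with the Dumitriu--Edelman tridiagonal $H_{n,\beta}$ in part (i) and the bidiagonal $B_{n,n+1,\beta}$ in part (ii). For $1\leq\beta<2$ we cannot reduce to the $\beta=2$ bound of Theorem~\ref{t: right tail upper bound for beta geq 2}: the stochastic domination of \cite{JB23} (which drives the extension from $\beta=2$ to $\beta>2$ in that theorem) is not available below $\beta=2$, and in any event would not produce the optimal constant $\tfrac{2\beta}{3}$. Hence the finite-$n$ Riccati analysis has to be performed intrinsically for each $\beta$ in $[1,2)$.

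For part (i) I would write the eigenvector three-term recurrence $Y_{k-1}u_{k-1}+X_k u_k+Y_k u_{k+1}=\sqrt{\beta}\,\lambda\, u_k$ with boundary condition $u_0=0$, then pass to the Riccati variable $p_k=Y_k u_{k+1}/u_k$, which satisfies a first-order stochastic recurrence. The event $\{\lambda_1^{(n,H,\beta)}\geq 2\sqrt n+tn^{-1/6}\}$ forces the Riccati walk at $\lambda=\mu:=2\sqrt n+tn^{-1/6}$ to survive (not blow up to $-\infty$) across the edge window $k\in[n-Cn^{2/3},n]$. Setting $k=n-\tau n^{1/3}$ and writing $p_k$ in edge coordinates, one Gaussianises each $Y_k$ via $\chi_\nu\approx\sqrt\nu+\tfrac{1}{\sqrt 2}g+O(\nu^{-1/2})$; the discrete Riccati walk then appears as a perturbation of the continuum diffusion $dp=(x+\tau-p^2)d\tau+\tfrac{2}{\sqrt\beta}dB_\tau$ (with $x=t$), whose survival rate is known to be $\exp(-\tfrac{2\beta}{3}t^{3/2}(1+o(1)))$. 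An exponential Chebyshev/Laplace bound on the discrete walk, with tilt matched to this rate function, then yields the stated inequality.

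The main obstacle is that to recover the matching constant $\tfrac{2\beta}{3}$ (and not just the exponent $t^{3/2}$) one must match the cumulants of the chi increments against the Gaussian cumulants of $dB_\tau$ to high precision across the entire window $[n-Cn^{2/3},n]$. This matching deteriorates for $\chi_\nu$ with $\nu$ small, and in the Hermite tridiagonal the relevant $Y_k$ near the edge (say $k\sim n-\tau n^{1/3}$) still have $\nu$ of order $\beta\tau n^{1/3}$, which is only borderline for $\beta<2$. The range $t\leq n^{1/4}$ is precisely the regime in which the exponential tilt of size $\sim t^{1/2}$ used in the moment-generating-function argument stays well within the Gaussian-approximation error of these chi variables; pushing to the full $t\leq\gamma(\vep)n^{2/3}$ range would require finer Edgeworth-type corrections that we do not attempt here.

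For part (ii) the same programme is executed on $L_{n,n+1,\beta}=B^t_{n,n+1,\beta}B_{n,n+1,\beta}$. The eigenvector recurrence for $L_{n,n+1,\beta}$ again reduces to a first-order Riccati-type stochastic recurrence, now driven by two interlocking chi chains $(D_i),(C_i)$, and the edge centering $4n$ together with scale $2^{4/3}n^{1/3}$ comes from the Marchenko--Pastur profile at $m=n+1$. The product structure $B^tB$ produces an effective noise whose fluctuations average two independent chi sequences, which gives better Gaussian control than the single tridiagonal chain in the Hermite case; this is what allows the wider, near-optimal range $t\leq\gamma(\vep)n^{2/3}$ in (ii) in contrast with the $n^{1/4}$ range in (i). The limiting Riccati diffusion is again the $\mathrm{SAO}_\beta$ diffusion, and the same rate-function calculation delivers the constant $\tfrac{2\beta}{3}$ in the exponent.
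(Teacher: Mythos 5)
Your proposed route --- a direct finite-$n$ Riccati/shooting analysis in the spirit of \cite{RRV} --- is a genuinely different approach from the one the paper takes, but your justification for why such an intrinsic analysis is \emph{needed} rests on a misreading of the stochastic-domination tool. The domination result of \cite{JB23}, $\lambda_{\max}(\sqrt{\beta_1}H_{\lceil\beta_2 n/\beta_1\rceil,\beta_1})\succeq\lambda_{\max}(\sqrt{\beta_2}H_{n,\beta_2})$ for $\beta_2>\beta_1$, transfers \emph{upward} in $\beta$ from whatever base case is available; it is not tied to $\beta_1=2$. The paper takes $\beta_1=1$ as the base: for part (i) it cites the sharp GOE right-tail bound \cite[Lemma 1.1]{BX23} (valid for $t\lesssim n^{1/4}$, which is precisely why the theorem restricts to that range) and then applies the domination to cover all $1\leq\beta<2$; for part (ii) it first establishes the $\beta=1$ Laguerre bound via the identity $T^{\ptoline{}}_{n-1}\overset{d}{=}\tfrac12\lambda_{\max}(L_{2n-1,2n,1})$, super-additivity of point-to-line passage times, and a Selberg-integral large-deviation calculation (Lemma \ref{l: large deviation proof for beta=1}), then again lifts to $1\leq\beta<2$ by domination (with a short extra domination step to handle the parity of $n$). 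Because the domination is lossless at the edge scaling, the constant $\tfrac{2\beta}{3}$ is preserved, contrary to your assertion that the domination route ``would not produce the optimal constant.'' Likewise, the reason (ii) reaches $t\leq\gamma(\vep)n^{2/3}$ while (i) stops at $n^{1/4}$ is entirely that the LPP/super-additivity route exists for Laguerre but not (to the authors' knowledge) for GOE --- this is stated in the remark after the theorem --- not the averaging over ``two interlocking chi chains'' you invoke.

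As for the Riccati programme itself, it is not incorrect in principle: the survival probability of the discrete Riccati walk past the edge window does encode $\P(\lambda_{\max}\geq\lambda)$, and the $\mathrm{SAO}_\beta$ rate function is $\tfrac{2\beta}{3}t^{3/2}$. But you are proposing it in exactly the direction where the paper avoids it. The paper's discretized Riccati/shooting argument appears only for the \emph{left-tail lower bound} (Section \ref{s: ltlb}, Proposition \ref{prop: Tridiag resultwith1}), where one needs a lower bound on a survival probability and can simply exhibit a favourable event. For the right-tail \emph{upper} bound one must upper-bound a blow-up probability with the sharp constant, which requires matching the discrete chi-walk to the continuum diffusion uniformly over the window with controlled error in the exponential (Cameron--Martin) tilt --- the KMT coupling of Proposition \ref{eq: estimate of off diagonals of the difference} controls the spectrum in operator norm but does not, by itself, deliver a sharp exponential tail upper bound. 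You correctly flag this as the main obstacle; however, the proposal stops at precisely the point where the hard work begins and supplies no mechanism to overcome it. As written it is a plausible programme, not a proof, and the paper's actual argument is both substantially more elementary and already complete.
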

\begin{remark}
    In the above theorem, $(i)$ is true for the range of $t$ up to $n^{2/3}$. Using the moment recursion in \cite{Ledoux09}, we can upgrade the right tail upper bound result of \cite{BX23} for $\beta=1$ case up to the range $t\leq \gamma(\varepsilon) n^{2/3}$. Now applying our proof methods, we can extend $(i)$ of the above theorem up to $t \leq \gamma(\varepsilon)n^{2/3}$.    For the Laguerre case we were unable to obtain the right tail upper bound for general $(m,n)$ in the range $1 \leq \beta <2.$ But our methods could be used for $(m,n)$ of the form $(n+c,n)$ for any constant $c \geq 0.$ See Remark \ref{remark: Laguerre n+c}.
    \end{remark}
\begin{theorem}
\label{t: rtlb}
For the right tail lower bound we have
    \begin{enumerate}[label=(\roman*), font=\normalfont]
        \item For any $\beta>0$ and any small $\varepsilon>0$ there exists $n_{\vep,\beta} \in \N$ and $t_{\vep,\beta},\gamma(\varepsilon)>0$ such that for all $n \geq n_{\vep,\beta}$ and $ t_{\vep,\beta} \leq t \leq \gamma(\vep)n^{2/3}$ we have that 
        \[
        \P \l(\lambda_1^{(n,H,\beta)} \geq 2\sqrt{n}+tn^{-1/6} \r) \geq \exp\l({-\frac{2\beta}{3}t^{3/2}\l(1+\varepsilon\r)}\r).
        \]
        \item For any $\beta>0$, any small $\vep>0$ with any $M>0$ there exists $n_{\vep,\beta,M} \in \N$ and $t_{\vep,\beta,M},\gamma(\vep,M)>0$ such that for any $(m,n)$ with $m> n-1$ and $ \frac m n \leq M$, for any $n \geq n_{\vep,\beta,M}$ and $t_{\vep,\beta,M} \leq t \leq \gamma(\vep,M)n^{{2/3}}$ we have 
        \[
        \P \l(\lambda_1^{(n,m,L,\beta)} \geq \l(\sqrt{m}+\sqrt{n} \r)^2+t\l(\sqrt{m n}\r)^{-1/3}\l(\sqrt{m}+\sqrt{n}\r)^{4/3} \r) \geq \exp \l({-\frac{2\beta}{3}t^{3/2}\l(1+\varepsilon\r)}\r).
        \]
    \end{enumerate}
\end{theorem}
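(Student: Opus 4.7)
The plan is to use the Rayleigh quotient characterization of the largest eigenvalue combined with an explicit deterministic trial vector, as hinted in the excerpt. Since $\lambda_{\max}(H_{n,\beta}) = \sup_{v \neq 0} v^T H_{n,\beta} v / \|v\|^2$ and $\lambda_{\max}(L_{n,m,\beta}) = \sup_{v \neq 0} \|B_{n,m,\beta}v\|^2 / \|v\|^2$, it suffices to exhibit a single deterministic $v$ for which the random Rayleigh quotient exceeds the stated threshold with probability at least $\exp(-\tfrac{2\beta}{3}(1+\vep) t^{3/2})$. The trial vector will be modeled on an approximate ground state of the stochastic Airy operator $\mbox{SAO}_\beta$ that governs the edge scaling limits \eqref{eq: Hermite TW convergence}--\eqref{eq: Laguerre TW convergence}.

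For the Hermite case, set $v_k = \phi((k-1)n^{-1/3})$ for a smooth compactly supported $\phi:[0,\infty) \to [0,\infty)$, and decompose $Y_k = \sqrt{\beta(n-k)} + \widetilde Y_k$ into mean plus centered chi fluctuation. Taylor expanding $\sqrt{\beta(n-k)}$ in the edge variable $x = (k-1)n^{-1/3}$ and collecting terms, the tridiagonal quadratic form rewrites as
\[
\frac{v^T H_{n,\beta} v}{\|v\|^2} = 2\sqrt n + n^{-1/6} \l( \mathcal{Q}_{\text{det}}(\phi) + \mathcal{Q}_{\text{noise}}(\phi) \r) + \text{error},
\]
where $\mathcal{Q}_{\text{det}}(\phi)$ is a discrete approximation to $-\int (\phi')^2 \,dx - \int x \phi^2 \,dx$ (the shifted Airy quadratic form, up to sign) and $\mathcal{Q}_{\text{noise}}(\phi)$ is a weighted linear combination of the independent $X_k$ and $\widetilde Y_k$ with weights driven by $\phi$ at the relevant edge indices; its variance is $\tfrac{4}{\beta}\int \phi^4\, dx + o(1)$.

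Next I would rescale the profile: write $\phi(x) = \sigma^{1/2}\psi(\sigma x)$ with $\|\psi\|_2 = 1$ fixed and $\sigma$ to be chosen. Then $\mathcal{Q}_{\text{det}}(\phi) = -\sigma^2 A - \sigma^{-1} B + o(1)$ with $A = \int (\psi')^2$ and $B = \int y\psi^2\,dy$, while $\mathcal{Q}_{\text{noise}}(\phi)$ is centered with variance $\tfrac{4\sigma}{\beta}\int\psi^4\,dy$. The event $\{v^T H_{n,\beta} v/\|v\|^2 \geq 2\sqrt n + tn^{-1/6}\}$ becomes $\{\mathcal{Q}_{\text{noise}}(\phi) \geq t + \sigma^2 A + \sigma^{-1}B\}$. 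Using a precise Gaussian/Cram\'er lower bound for the sum $\mathcal{Q}_{\text{noise}}(\phi)$ (valid because, for $t$ in the stated range, the centered chi variables behave near-Gaussian in the moderate deviation regime), the probability is lower bounded by $\exp(-(\tfrac{\beta}{8 \sigma \int \psi^4})(t + \sigma^2 A + \sigma^{-1}B)^2(1+o(1)))$. Optimizing over $\sigma$ gives $\sigma \sim \sqrt{t/(3A)}$, yielding an exponent of $(\tfrac{2\beta \sqrt{3A}}{9\int\psi^4})(1+o(1))t^{3/2}$. A further minimization over the shape of $\psi$, which is equivalent to a 1D Airy-type Gagliardo--Nirenberg extremal problem, reduces the constant exactly to $\tfrac{2\beta}{3}$.

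For the Laguerre case the proof is structurally identical: use $\|B_{n,m,\beta}v\|^2 = (1/\beta)\sum_k(D_k v_k + C_{k-1} v_{k-1})^2$, take $v_k = \phi((k-1)n^{-1/3})$, Taylor-expand the means of $D_k$ and $C_k$ to separate deterministic and centered parts, and run the analogous Rayleigh/Cram\'er optimization with the Laguerre edge rescaling $(\sqrt{mn})^{-1/3}(\sqrt m+\sqrt n)^{4/3}$; the hypothesis $m/n \leq M$ ensures uniform control of the Taylor remainders. The main obstacle throughout is matching the sharp constant $\tfrac{2\beta}{3}$: every step (discretization, Taylor expansion of the chi means, near-Gaussian approximation of the $\widetilde Y_k$, Cram\'er lower bound) must cost only a $(1+o(1))$ multiplicative factor in the exponent, so one has to reproduce the continuum SAO variational optimization of \cite{RRV} at the discrete level, uniformly in $t$ up to $\gamma(\vep)n^{2/3}$. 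The upper bound $\gamma(\vep)n^{2/3}$ on $t$ arises precisely to keep $\sigma n^{-1/3} \to 0$ (so the test profile genuinely lives in the edge window) and the Gaussian approximation of the centered chi's valid.
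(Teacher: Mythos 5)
Your outline follows the same overall blueprint as the paper — variational lower bound via a sech-type trial vector at the edge scaling — but handles the chi fluctuations in a genuinely different way, and that is where your sketch is least complete. The paper does not invoke Cram\'er at all. Instead it uses a KMT coupling (Proposition~\ref{eq: estimate of off diagonals of the difference}) to replace the off-diagonal $\chi_{\beta(n-i)}$ entries, in the top $p\sim n^{1/3}/\sqrt t$ rows, by literal Gaussians $\sqrt{\beta(n-i)}+\zeta_i/\sqrt 2$; after that substitution $\langle\wih H_{n,\beta,p}v,v\rangle$ is exactly a Gaussian random variable, the tail lower bound is a one-line Gaussian estimate once the mean and variance are computed by Riemann-sum approximation (equations \eqref{eq: right tail L2,L4 norm}--\eqref{eq: right tail derivative square norm}), and the coupling error is folded into $(1+\vep)$ via Gershgorin plus Weyl. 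Your route keeps the chi's and asserts a moderate-deviation lower bound for the weighted sum $\mathcal Q_{\text{noise}}(\phi)$ of centered chi's. You have the right intuition — and you correctly see that the constraint $t\le\gamma(\vep)n^{2/3}$ is what keeps the per-coordinate tilt $\sim t n^{-1/6}$ small against the chi's Gaussian window $\sim\sqrt n$ — but a \emph{lower} bound with the matching constant for a sum of non-identically-distributed weighted centered chi's, uniform over $k$, $n$ and the whole range of $t$, is a Bahadur--Rao-type argument that you asserted rather than supplied; the paper's KMT step is precisely how it avoids that. Your profile optimization ($\phi=\sigma^{1/2}\psi(\sigma\cdot)$, then extremize $\psi$ via the $1$D Gagliardo--Nirenberg problem) is a clean \emph{post hoc} derivation of why the paper's choice (sech with dilation $\sqrt t$) is optimal, and the arithmetic checks out: with $\psi=\sech/\sqrt2$ one has $\int(\psi')^2=\int\psi^4=1/3$, so $2\beta\sqrt{3A}/(9\int\psi^4)=2\beta/3$. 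Two omissions worth flagging: (i) since $v(0)=0$, the bump cannot be centered at the origin; it must be translated, as in the paper's $f_t(x)=\sech(x-\sqrt t-\sech(\sqrt t))$. (ii) In the Laguerre case, even after Gaussianizing the entries, $\langle L v,v\rangle$ is quadratic (not linear) in the underlying variables, so it is not exactly Gaussian; the paper handles this by writing $\wih L_{n,m,\beta,p}=L'+L''$ with $L'$ linear and $L''$ a small quadratic remainder controlled by Gershgorin, a decomposition your sketch does not address.
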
 
\begin{theorem}
    \label{t: ltub} 
    For the left tail upper bound we have
    \begin{enumerate}[label=(\roman*), font=\normalfont]
        \item For any $\beta>0$ and any $\vep,\delta>0$ small enough there exist $n_{\vep,\beta,\delta} \in \N$ and $t_{\vep,\beta,\delta}>0$ such that for any $n \geq n_{\vep,\beta,\delta}$ and $t_{\vep,\beta,\delta} \leq t \leq n^{1/6-\delta}$ we have
        \[
        \P \l(\lambda_{1}^{(n,H,\beta)} \leq 2\sqrt{n} -tn^{-1/6}\r) \leq \exp \l({-\frac{\beta}{24}t^{3}\l(1-\varepsilon\r)}\r).
        \]
        \item For any $\beta>0$, any small $\vep,\delta>0$ and any $M>0$ there exist $n_{\vep,\beta,\delta,M} \in \N$ and $t_{\vep,\beta,\delta,M}>0$ such that for any $n \geq n_{\vep,\beta,\delta,M}$ and $t_{\vep,\beta,\delta,M} \leq t \leq n^{1/6-\delta}$ and for any $(m,n)$ with $m > n-1$ with $ \frac m n \leq M$ we have 
        \[
        \P \l(\lambda_{1}^{(n,m,L,\beta)} \leq \l(\sqrt{m} +\sqrt{n}\r)^2-t\l(\sqrt{m n} \r)^{-1/3} \l(\sqrt{m} +\sqrt{n}\r)^{4/3}\r) \leq \exp \l({-\frac{\beta}{24}t^{3}\l(1-\varepsilon\r)}\r).
        \]
    \end{enumerate}
\end{theorem}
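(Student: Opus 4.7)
The plan is to exploit the variational characterization $\lambda_{\max}(H_{n,\beta}) = \sup_{\|v\|=1} v^T H_{n,\beta} v$, which gives, for any deterministic unit vector $v$,
$$\P\bigl(\lambda_1^{(n,H,\beta)} \leq 2\sqrt{n} - t n^{-1/6}\bigr) \leq \P\bigl(v^T H_{n,\beta} v \leq 2\sqrt{n} - t n^{-1/6}\bigr).$$
The whole problem therefore reduces to finding, for each deviation level $t$, a deterministic unit vector $v = v^{(t)}$ whose quadratic form concentrates comfortably above $2\sqrt{n} - t n^{-1/6}$. (For Laguerre one uses instead $\lambda_{\max}(L_{n,m,\beta}) = \sup_{\|v\|=1} \|B_{n,m,\beta} v\|^2$ and an analogous strategy.)

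Following the remark in the excerpt and the philosophy of Ram\'irez-Rider-Vir\'ag (Section~4), I would take $v^{(t)}$ to be a discretization of a rescaled Airy-like profile $\phi$, supported on the top $\sim w\cdot n^{1/3}$ coordinates, where $w = w(t) > 0$ is a width parameter to be optimized at the end. Concretely $v_i^{(t)} \propto \phi((n-i)/(w\, n^{1/3}))$, normalized to unit length. A Riemann-sum expansion of the mean
$$\mu(v) := \E[v^T H_{n,\beta} v] = \tfrac{2}{\sqrt{\beta}} \sum_i \E[Y_i]\, v_i v_{i+1},$$
using $\E[Y_i] \approx \sqrt{\beta(n-i)}$ and expanding near the top edge, yields $\mu(v) = 2\sqrt{n} - c_1(\phi) \, w\, n^{-1/6} + o(w n^{-1/6})$ for an explicit functional $c_1(\phi) > 0$.

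The second ingredient is concentration. I would decompose $v^T H v - \mu(v)$ into its Gaussian piece $\beta^{-1/2}\sum_i X_i v_i^2$ and its centered-$\chi$ piece $2\beta^{-1/2} \sum_i (Y_i - \E Y_i)\, v_i v_{i+1}$. In the moderate-deviation range $t \leq n^{1/6-\delta}$, sharp Gaussian concentration (for the $X$-piece) and Bernstein- or Laurent-Massart-type concentration (for the centered-$\chi$ piece) combine to give
$$\P\bigl(v^T H v \leq \mu(v) - s\bigr) \leq \exp\Bigl(-\tfrac{s^2}{2\sigma^2(v)}(1-o(1))\Bigr), \qquad \sigma^2(v) \approx \tfrac{c_2(\phi)}{\beta w}\, n^{-1/3},$$
uniformly in the regime of interest. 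Setting $s = (t - c_1(\phi) w) n^{-1/6}$, provided $w < t/c_1(\phi)$, converts the target probability into the rate $\tfrac{\beta}{2 c_2(\phi)}\, w \, (t - c_1(\phi) w)^2$. Maximizing this over $w$ at $w^* = t/(3 c_1(\phi))$ produces the rate $\tfrac{2\beta}{27\, c_1(\phi)\, c_2(\phi)}\, t^3$; the profile $\phi$ is then chosen to minimize $c_1(\phi) c_2(\phi)$ subject to the unit-norm constraint, and, calibrated against the Airy variational problem, this yields exactly the target constant $\beta/24$.

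The Laguerre case proceeds identically, with an Airy-type test vector at the edge $(\sqrt m + \sqrt n)^2$ applied to the bidiagonal $B_{n,m,\beta}$. The main obstacle, in both cases, is not the $t^3$ scaling (which falls out of the optimization above) but pinning down the exact constant $\beta/24$: this requires carrying the Riemann-sum and variance asymptotics to sufficient precision, establishing uniform-in-$w$ concentration for the $\chi$-piece, and matching the optimal profile to the extremizer of the associated Airy-type variational problem. The ceiling $t \leq n^{1/6-\delta}$ reflects the range where sharp sub-Gaussian concentration for the centered chi variables (of shape parameter $\beta(n-i)$) remains valid; pushing further would require large-deviation asymptotics for the $Y_i$ near the top edge.
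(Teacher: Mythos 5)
Your outline follows essentially the same route as the paper: use the variational inequality $\lambda_{\max}\geq v^T H_{n,\beta}v/\|v\|^2$ with a deterministic test vector obtained by discretizing the Ram\'irez--Rider--Vir\'ag profile, expand the mean via Riemann sums, and apply concentration for the deviation. Two points where you depart from the paper are worth flagging. First, the paper does not attack the $\chi$-entries directly with Bernstein/Laurent--Massart: it first replaces the $\chi_{\beta(n-i)}$ entries in the top $p\times p$ block (with $p=\lfloor tn^{1/3}\rfloor$) by $\sqrt{\beta(n-i)}+\zeta_i/\sqrt 2$ via a KMT coupling, controls the operator-norm error with Gershgorin's theorem and Weyl's inequality (Proposition \ref{eq: estimate of off diagonals of the difference}), and then works with a genuinely Gaussian quadratic form. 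Your direct-concentration route is a legitimate alternative for the \emph{left} tail, since the $\chi$ left tail is lighter than Gaussian, but you would still need to verify that the MGF bound carries the exact variance constant uniformly in the range $t\leq n^{1/6-\delta}$, which the KMT step makes automatic. Second, the paper does not optimize over a width parameter and a generic profile $\phi$; it instead plugs in the specific truncated profile $g_t(x)=x\sqrt t\wedge\sqrt{(t-x)^+}\wedge(t-x)^+$. This is not merely a stylistic difference: the unconstrained optimizer of your $c_1(\phi)c_2(\phi)$, which is $\phi(y)=\sqrt{1-y}$, has $\int(\phi')^2=\infty$, so the gradient contribution $\sqrt n\sum(\Delta v(k))^2$ is not automatically $o(wn^{-1/6})$ as your mean expansion assumes. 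The paper's linear interpolation at both ends of $g_t$ regularizes exactly this singularity, yielding $\sqrt n\sum(\Delta v(k))^2\leq Ctn^{1/6}$ (which is then negligible against the dominant $t^3n^{1/6}$ contribution from the $\frac{1}{\sqrt n}\sum k\,v(k)^2$ term). Your proposal would need to incorporate such a regularization, or else carefully track the logarithmic correction that the singular profile produces in the discrete gradient sum, to close the argument. For Laguerre, the paper additionally has to split the modified matrix $\wih L_{n,m,\beta,p}$ into a Gaussian part $L'$ and a quadratic-in-Gaussians part $L''$ and bound the latter by Gershgorin; "proceeds identically" glosses over that extra bookkeeping, although the ideas do carry over.
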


\begin{theorem}
    \label{t: ltlb}
    For the left tail lower bound we have
    \begin{enumerate}[label=(\roman*), font=\normalfont]
        \item For any $\beta>0$ and any $\vep,\delta>0$ small enough there exist $n_{\vep,\beta,\delta} \in \N$ and $t_{\vep,\beta,\delta}>0$ such that for any $n \geq n_{\vep,\beta,\delta}$ and $t_{\vep,\beta,\delta} \leq t \leq n^{1/9-\delta}$ we have
        \[
        \P \l(\lambda_{1}^{(n,H,\beta)} \leq 2\sqrt{n} -tn^{-1/6}\r) \geq \exp \l({-\frac{\beta}{24}t^{3}\l(1+\varepsilon\r)} \r).
        \]
        \item For any $\beta>0$, any small $\vep,\delta>0$ and any $M>0$  there exist $n_{\vep,\beta,\delta,M} \in \N$ and $t_{\vep,\beta,\delta,M}$ such that for any $n \geq n_{\vep,\beta,\delta,M}$ and $t_{\vep,\beta,\delta,M} \leq t \leq n^{1/9-\delta}$ and for any $(m,n)$ with $m>n-1$ and $\frac mn \leq M$ we have 
        \[
        \P \l(\lambda_{1}^{(n,m,L,\beta)} \leq \l(\sqrt{m} +\sqrt{n}\r)^2-t\l(\sqrt{m n} \r)^{-1/3} \l(\sqrt{m} +\sqrt{n}\r)^{4/3}\r) \geq \exp \l({-\frac{\beta}{24}t^{3}\l(1+\varepsilon\r)}\r).
        \]
    \end{enumerate}
\end{theorem}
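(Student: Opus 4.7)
The plan is to combine the Sturm sequence description of eigenvalues of tridiagonal matrices with the spiked $\beta$-ensemble limit theorems of \cite{BV12} in order to obtain not merely the correct cubic exponent but the sharp constant $\beta/24$. I begin with the Hermite case (i). Set $x_n = 2\sqrt{n}-tn^{-1/6}$. Using the Dumitriu-Edelman model \eqref{eq: tridiag matrix}, consider the three-term recurrence satisfied by a formal solution $v=(v_k)$ of $H_{n,\beta} v = x_n v$,
\[
Y_{k-1} v_{k-1} + X_k v_k + Y_k v_{k+1} = \sqrt{\beta}\, x_n v_k, \qquad v_0 = 0,\; v_1 = 1.
\]
By Sturm oscillation theory, $\lambda_1^{(n,H,\beta)} \leq x_n$ is equivalent to the sequence $(v_k)_{k=1}^{n+1}$ having no sign changes, so it is enough to lower bound this no-sign-change probability.

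To produce a lower bound, I construct an explicit event $E$ on $(X_k, Y_k)$ on which $(v_k)$ is forced to remain positive throughout. Introducing the ratio $\rho_k = \sqrt{\beta}\, v_{k+1}/v_k$, the recursion rearranges into the forward Riccati-type dynamics
\[
\rho_k = \l(x_n - X_k/\sqrt{\beta}\r) - \l(Y_{k-1}/\sqrt{\beta}\r)^2/\rho_{k-1},
\]
and the no-sign-change event becomes $\{\rho_k > 0 \text{ for all } k\}$. I choose $E$ to prescribe favorable values of $(X_k, Y_k)$ over the edge window $k \in \{n-cn^{2/3}, \ldots, n\}$, tuned so that $\rho_k$ tracks a deterministic target path staying a definite margin above $0$. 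The Gaussian density of $X_k \sim N(0,2)$ and the explicit form of the chi-density of $Y_k \sim \chi_{\beta(n-k)}$ make $\log\P(E)$ computable; optimizing over the target path produces, at leading order, the cost $-\tfrac{\beta}{24}t^3(1+o(1))$, precisely matching the continuum variational minimizer for the left-tail large deviation rate of the stochastic Airy operator.

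To extract the exact constant (as opposed to only the exponent) I invoke the spiked $\beta$-ensemble framework of \cite{BV12}, which identifies the limit law of the top eigenvalue of a tridiagonal model with a modified corner entry as the bottom eigenvalue of $\mathrm{SAO}_\beta$ with a Robin boundary condition, and supplies distributional control on the top eigenvalue conditional on the initial entries. By calibrating the conditioning in $E$ to correspond to such a spiked boundary condition, the conditioned top eigenvalue lies below $x_n$ with probability bounded away from $0$, and multiplying by $\P(E)$ yields the desired lower bound. The Laguerre case (ii) follows along the same lines: the bidiagonal model $B_{n,m,\beta}$ of \eqref{eq: tridiag matrix} gives the analogous three-term recurrence for the singular vectors of $L_{n,m,\beta}$, and the soft-edge spiked Wishart results in \cite{BV12} furnish the parallel tool; the assumption $m/n \leq M$ keeps all constants uniform in $(m,n)$.

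The main obstacle is sharpening from the correct cubic exponent (available from \cite{LR10,BGHK21}) to the coefficient $\beta/24$. This requires matching the discrete log-likelihood calculation for the conditioning event to the continuum variational problem for $\mathrm{SAO}_\beta$, which entails linearizing the Riccati recursion around its deterministic path, controlling accumulation of errors across the $n^{2/3}$ sites in the edge window, and interfacing cleanly with the BV12 spiked estimate without losing constants. The range $t \leq n^{1/9-\delta}$ reflects exactly when the linearizations and error accumulations along the target trajectory begin to contribute to the leading order; improving it would require finer discrete-to-continuum control.
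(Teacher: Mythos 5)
Your high-level route is essentially the paper's: pass to the Sturm/Riccati dynamics of the tridiagonal model, force the ratios to track a favorable path on an edge window, evaluate the log-probability of that event against the Cameron--Martin cost to get $-\tfrac{\beta}{24}t^3$, and then use the spiked $\beta$-ensemble limit theorems of \cite{BV12} to show that the complementary part of the matrix cooperates with probability bounded away from zero. The paper does precisely this, the structural ingredient being Proposition~\ref{prop: Tridiag resultwith1}.

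However, there are three concrete gaps. First, your edge window is in the wrong place and at the wrong scale. In the Dumitriu--Edelman model the off-diagonals $Y_i\sim\chi_{\beta(n-i)}$ decrease in $i$, so the largest-eigenvalue soft edge lives in the \emph{top-left} block $k\lesssim tn^{1/3}$, not the bottom-right block $k\in\{n-cn^{2/3},\ldots,n\}$ that you propose, and the correct window scale is $tn^{1/3}$ (with the $t$-dependence essential), not a fixed $cn^{2/3}$. With your choice the chi degrees of freedom run down to zero, the Gaussian/Airy approximation fails on most of the window, and the variational cost would not come out to $\beta t^3/24$; this is not a typo-level issue but invalidates the derivation. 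Second, estimating $\log\P(E)$ directly from the chi densities is substantially harder than you suggest: the paper instead replaces each $\chi_{\beta(n-i)}$ by a Gaussian through a KMT coupling (Proposition~\ref{eq: estimate of off diagonals of the difference}) before doing a purely Gaussian Cameron--Martin computation, and without an analogous reduction you will have to control the non-Gaussian corrections to the chi log-density uniformly over the $tn^{1/3}$ sites at a precision that your sketch does not address. Third, the hand-off from the window event to the rest of the matrix is not just a matter of ``calibrating the conditioning to a spiked boundary condition''. After forcing positivity and the margin $u(p+1)>u(p)$ on the window, Proposition~\ref{prop: Tridiag resultwith1} requires that the lower $(n-p)\times(n-p)$ submatrix, with a specific spiked corner entry, have its largest eigenvalue below $\lambda$; one must show this event has probability bounded below \emph{and} decouple it from the window event. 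In the Hermite case this independence is exact, but in the Laguerre case the top and bottom submatrices share a chi variable ($C_p$), which is why the paper introduces the modified matrix $\widetilde{L}_{\mathrm{low}}$, the events $\mathcal{C},\mathcal{D},\widetilde{\mathcal{M}},\overline{\mathcal{M}}$, and a careful inclusion $\widetilde{\mathcal{A}}\cap\mathcal{C}\cap\mathcal{D}\cap\widetilde{\mathcal{M}}\subset\mathcal{A}\cap\overline{\mathcal{M}}$. That decoupling is the real technical obstacle in the Laguerre case and your proposal does not address it.
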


\subsection{Application: Laws of iterated logarithm in exponential LPP}
\label{subs: application to LPP}
As mentioned already, we now move towards describing some applications of the optimal tail estimates to problems in exponential LPP. We begin by defining the model.

First we fix two important notations. Throughout this article for $r \in \Z $ we denote the vertex $(r,r) \in \Z^2$ by $ \bo r$ and $\cl_{r}$ is the line consisting of all vertices $(v_1,v_2) \in \Z^2$ such that $v_1+v_2=r$. We assign i.i.d.\ random variables $\{\zeta_{v}\}_{v \in \mathbb{Z}^2}$ to each vertex of $\mathbb{Z}^2$, where $\zeta_{v}$'s are distributed as Exp(1). Let $u,v \in \mathbb{Z}^2$ be such that $u \leq v$ (i.e., if $u=(u_1,u_2),v=(v_1,v_2)$ then $u_1 \leq v_1$ and $u_2 \leq v_2$). For an up-right path $\gamma$ between $u$ and $v$ we define $\ell(\gamma)$, the \textit{passage time of $\gamma$} and $T_{u,v}$, the \textit{last passage time between $u$ and $v$} by 
\begin{displaymath}
    \ell (\gamma) := \sum_{w \in \gamma} \zeta_{w}.
\end{displaymath} 
and
\begin{displaymath}
T_{u,v}:=\max\{ \ell(\gamma): \gamma \text{ is an up-right path from } u \text{ to } v\}
\end{displaymath}
respectively.

Clearly, as the number of up-right paths between $u$ and $v$ is finite, the maximum is always attained. Between any two (ordered) points $u,v \in \mathbb{Z}^2$, maximum attaining paths are called \textit{point-to-point geodesics}. As $\zeta_{v}$ has a continuous distribution, almost surely, between any two points $u,v \in \mathbb{Z}^2$, there exists a unique geodesic denoted by $\Gamma_{u,v}$. Further, $T_n^{\ptop{}}$ denotes the last passage time from $\boldsymbol{0}$ to $\boldsymbol{n}$ and $\Gamma_n^{\ptop{}}$ will denote the a.s. unique geodesic between $\bo 0$ to $\bo n$.\\
We can also define \textit{point-to-line} and \textit{line-to-point passage times}: for $v=(v_1,v_2) \in \Z^2$ and $r \in \Z,$ with $r \leq v_1+v_2,$ the line-to-point last passage time is defined by
\[
T_{\cl_r,v}=\max_{u \in \cl_r}T_{u,v}.
\]
By same argument as before unique \textit{line-to-point} geodesics exist a.s. and are denoted by $\Gamma_{\cl_r,u}$. In particular, $T^{{\linetop{}}}_n$ will denote the last passage time from $\cl_{0}$ to $\bo n$ and $\Gamma_n^{\linetop{}}$ will denote the a.s. unique geodesic between $\cl_{0}$ and $\bo n$.
Point-to-line last passage times are defined as follows. Let $u=(u_1,u_2) \in \Z^2$ and $r \in \Z$ be such that $u_1+u_2 \leq r$ we define 
\[
T_{u,\cl_r}:=\max_{v \in \cl_r}T_{u,v}.
\]
The a.s. unique \textit{point-to-line geodesic} will be denoted by $\Gamma_{u, \cl_r}$. Last passage time from $\bo 0$ to $\cl_{2n}$ will be denoted by $T_n^{{\ptoline{}}}$ and 
the a.s. unique point-to-line geodesic from $\bo 0$ to $\cl_{2n}$ is denoted by $\Gamma_n^{\ptoline{}}.$ Observe that by symmetry, $T^{{\ptoline{}}}_n$ and $T^{{\linetop{}}}_n$ have the same distribution for any $n$, but the sequence $\left(T^{{\ptoline{}}}_n\right)_{n\ge 1}$ is not identically distributed to the sequence $\left(T^{{\linetop{}}}_n\right)_{n\ge 1}$. Therefore, the results for the two sequences point-to-line and line-to-point (in Theorem \ref{t: LILs}) cannot be deduced from one another, although the end results are identical.

We consider another last passage model on $\mathbb{Z}^2$, having i.i.d $\zeta_v\sim$ Exp$(1)$ random weights on vertices, with symmetry across the $x=y$ line that is $\zeta_{(i,j)}=\zeta_{(j,i)}$ for $i \neq j$. Define
\begin{align*}
T^{\ptohalfspace{}}_{n}:=\max\limits_{\gamma}\ell(\gamma),
\end{align*}
where $\gamma$ are the up-right oriented paths in $\mathbb{Z}^2$ from $\bo{0}\in\mathbb{Z}^2$ to $\bo{n}\in\mathbb{Z}^2$ and $\ell(\gamma)=\sum\limits_{v\in \gamma}\zeta_v$. Note that, by symmetry $T^{\ptohalfspace{}}_{n}$ remains same if we consider the paths $\gamma$ from $\bo{0}, \bo{n}$ and $\gamma$ stays above the diagonal (i.e., if $(v_1,v_2) \in \gamma$, then $v_1 \leq v_2$). Hence, the above model is equivalent to the model where we have collection of i.i.d. Exp(1) random variables $\zeta_v$ for all $v \in \Z^2$ such that $v_1,v_2 \geq 0$ and $v_1 \leq v_2$ and all the other vertices have zero weights. This model is known as the \textit{half space model}. We can similarly define \textit{point-to-point passage time in the half space model} by considering maximum over the up-right paths. $T_{u,v}^{\ptohalfspace{}}$ and $\Gamma_{u,v}^{\ptohalfspace{}}$ will denote the passage time and the geodesics in this model respectively. As before, $T_n^{\ptohalfspace{}}$ and $\Gamma_n^{\ptohalfspace{}}$ will denote the passage time and geodesic between $\bo 0$ and $\bo n$ respectively in this model.

 It follows immediately from the definitions that one can couple $T^{\linetop{}}_{n},T^{\ptoline{}}_{n},T^{\ptop{}}_{n}, T^{\ptohalfspace{}}_{n}$ such that
\begin{equation}
\label{eq: ordering}
T^{\linetop{}}_{n}\left(T^{\ptoline{}}_{n}\right) \geq T^{\ptop{}}_{n} \geq T^{\ptohalfspace{}}_{n}. 
\end{equation}


We have the following distributional equalities. 
\begin{equation}
    \begin{aligned}\label{eq: distributional equalities}
T^{\ptop{}}_{n-1}\overset{d}{=}&\lambda_{\text{max}}\left( B^t_{n,n, 2}B_{n,n, 2}\right) \quad\mbox{\cite[Proposition 1.4]{KJ}}.\\
T^{\linetop{}}_{n-1}\overset{d}{=}T^{\ptoline{}}_{n-1}\overset{d}{=}&\frac{1}{2}\lambda_{\text{max}}\left( B^t_{2n-1,2n, 1}B_{2n-1,2n, 1}\right) \quad \mbox{\cite[Proposition 1.3]{BGHK21}}.\\
T^{\ptohalfspace{}}_{{2n-2}}\overset{d}{=}&2\lambda_{\text{max}}\left( B^t_{n,n-\frac{1}{2}, 4}B_{n,n-\frac{1}{2}, 4}\right)\quad\mbox{\cite[Proposition 7]{JB23}}.
\end{aligned}
\end{equation}

By the distributional equalities above and \eqref{eq: Laguerre TW convergence} we have,
\begin{equation}
    \begin{aligned}\label{eq: LPP TW convergence}
    \frac{T^{\ptop{}}_{{n}}-4n}{2^{4/3}n^{1/3}}\implies TW_2.\\
    \frac{T^{\ptoline{}}_{{n}}-4n}{2^{4/3}n^{1/3}}\  \mbox{  and   }\  \frac{T^{\linetop{}}_{{n}}-4n}{2^{4/3}n^{1/3}}\implies \frac{1}{2^{2/3}}TW_1.\\
\frac{T^{\ptohalfspace{}}_{{2n}}-8n}{2^{4/3}(2n)^{1/3}}\implies 2^{2/3}TW_4.
\end{aligned}
\end{equation}
From the theorems in Section $1.1$ and above distributional equalities we immediately get the following tail estimates for the passage times. The following theorem holds for all the 4 passage times $T_n^{\ptop{}},T_n^{\ptoline{}},T_n^{\linetop{}},T_n^{\ptohalfspace{}}$. We will use a common notation $T_n^*$ to denote all of them.
\begin{theorem}
\label{t: passage time tail estimates}
Let  $T_n^*$ be any of $T_n^{\linetop{}}$ or $T_n^{\ptoline{}}$ or $T_n^{\ptop{}}$ or  $T_n^{\ptohalfspace{}}$ and correspondingly let $\beta$ be $1$ or $1$ or $2$ or $4$ respectively.
    \begin{enumerate}[label=(\roman*), font=\normalfont]
        \item For any $\varepsilon>0$ there exist $n_{\varepsilon} \in \N$ and $\gamma(\varepsilon),t_{\varepsilon}>0$ such that for all $t_{\varepsilon} \leq t \leq \gamma(\varepsilon)n^{2/3}$ and for all $n \geq n_{\varepsilon}$ we have
        \[
      \exp\l({-\frac{4}{3}t^{3/2}\l(1+\varepsilon\r)}\r)\leq\P \l(T_{n}^* \geq 4n+2^{4/3}tn^{1/3} \r) \leq \exp\l({-\frac{4}{3}t^{3/2}\l(1-\varepsilon\r)}\r).\]
        \item For any any small $\vep>0$ there exist $n_{\vep}\in \N$ and $t_{\vep}$ such that for any $n \geq n_{\vep}$ and $t_{\vep} \leq t \leq n^{1/10}$, we have  
        \[
         \exp \l(-{\frac{1}{6\beta}t^{3}\l(1+\varepsilon\r)} \r)\leq\P \l(T_n^*\leq 4n-2^{4/3}tn^{1/3}\r) \leq \exp \l(-{\frac{1}{6\beta}t^{3}\l(1-\varepsilon\r)}\r). 
         \]
        \end{enumerate}
\end{theorem}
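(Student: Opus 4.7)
The plan is a direct reduction through the distributional equalities in~\eqref{eq: distributional equalities}, followed by application of the Laguerre $\beta$-ensemble tail estimates of Section~1.1. For each of the four statistics $T_n^{\linetop{}},T_n^{\ptoline{}},T_n^{\ptop{}},T_n^{\ptohalfspace{}}$ the corresponding equality expresses $T_n^*$ as a scalar multiple of $\lambda_{\max}$ of a specific bidiagonal Wishart-type matrix, so that both bounds become tail estimates for $\lambda_1^{(n',m',L,\beta_{\mathrm{ens}})}$ in an ensemble whose parameters have $m'/n'\to 1$. I would then invoke Theorem~\ref{t: right tail upper bound for beta geq 2}(ii) (for $\beta_{\mathrm{ens}}\in\{2,4\}$) or Theorem~\ref{t: rtub beta geq 1}(ii) (for $\beta_{\mathrm{ens}}=1$) for the right-tail upper bound, Theorem~\ref{t: rtlb}(ii) for the right-tail lower bound, and Theorems~\ref{t: ltub}(ii),~\ref{t: ltlb}(ii) for the two left-tail bounds; the hypothesis $m/n\leq M$ poses no problem with $M=2$.

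The essential step is to check that the exponents $\tfrac{2\beta_{\mathrm{ens}}}{3}(t')^{3/2}$ and $\tfrac{\beta_{\mathrm{ens}}}{24}(t')^3$ produced by the input theorems turn into $\tfrac{4}{3}t^{3/2}$ and $\tfrac{1}{6\beta}t^3$ after the change of scale. For $T_n^{\ptop{}}$ with $\beta_{\mathrm{ens}}=2$, $(m',n')\approx(n,n)$ and no prefactor, the fluctuation scale of $\lambda_{\max}$ is exactly $2^{4/3}n^{1/3}$, giving $t'=t$ and $\tfrac{2\cdot 2}{3}t^{3/2}=\tfrac{4}{3}t^{3/2}$. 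For $T_n^{\linetop{}}\overset{d}{=}T_n^{\ptoline{}}$ with $\beta_{\mathrm{ens}}=1$, $(m',n')\approx(2n,2n)$ and prefactor $\tfrac12$, the effective fluctuation scale is $\tfrac12\cdot 2^{4/3}(2n)^{1/3}=2^{2/3}n^{1/3}$, so $t'=2^{2/3}t$ and $\tfrac{2}{3}(2^{2/3}t)^{3/2}=\tfrac{4}{3}t^{3/2}$. For $T_n^{\ptohalfspace{}}$ with $\beta_{\mathrm{ens}}=4$, $(m',n')\approx(n/2,n/2)$ and prefactor $2$, the effective fluctuation scale is $2\cdot 2^{4/3}(n/2)^{1/3}=4n^{1/3}$, which gives $t'=2^{-2/3}t$ and $\tfrac{8}{3}(2^{-2/3}t)^{3/2}=\tfrac{4}{3}t^{3/2}$. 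The identical substitutions turn $\tfrac{\beta_{\mathrm{ens}}}{24}(t')^3$ into $\tfrac{1}{6\beta}t^3$ with $\beta\in\{1,2,4\}$ exactly as prescribed.

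Two technical points complete the argument. First, the centring $(\sqrt{m'}+\sqrt{n'})^2$ agrees with $4n$, $8n$, $4n$ only up to $O(1)$, and the identities in~\eqref{eq: distributional equalities} are stated at index $n-1$ or $2n-2$ rather than at $n$; these produce an $O(1)$ shift in $t'$ that gets absorbed into the $\vep$-slack, since the fluctuation term is of order $n^{1/3}t\to\infty$ once $t\geq t_\vep$ with $t_\vep$ large. Second, one must verify that $t\in[t_\vep,\gamma(\vep)n^{2/3}]$ and $t\in[t_\vep,n^{1/10}]$ fit inside the admissible ranges of the input theorems, which allow $t\leq\gamma(\vep)n^{2/3}$ for the right tail, and $t\leq n^{1/6-\delta}$, $t\leq n^{1/9-\delta}$ for the left-tail upper and lower bounds respectively---both of which comfortably contain $n^{1/10}$ for small enough $\delta$, even after the bounded rescaling of $t$ by factors $2^{\pm 2/3}$ and the passage from $n$ to $n'\asymp n$. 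There is no conceptual obstacle here; the only work is the numerical bookkeeping described above, the heavy lifting being done by the Laguerre tail estimates of Section~1.1.
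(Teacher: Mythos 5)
Your proposal is correct and is exactly the reduction the paper has in mind — the paper itself says only that Theorem~\ref{t: passage time tail estimates} ``immediately'' follows from \eqref{eq: distributional equalities} together with the Laguerre tail bounds of Section~1.1, and you have supplied the arithmetic that makes ``immediately'' precise. Your scaling computations (passing from $t$ to $t'=t$, $2^{2/3}t$, $2^{-2/3}t$ in the three cases, and checking that $\tfrac{2\beta_{\mathrm{ens}}}{3}(t')^{3/2}=\tfrac43 t^{3/2}$ and $\tfrac{\beta_{\mathrm{ens}}}{24}(t')^3=\tfrac{1}{6\beta}t^3$) all check out, as does the range bookkeeping ($n^{1/10}<n^{1/9-\delta}$ for $\delta$ small). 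Two small loose ends you gloss over but which are easily closed in the same spirit as your $O(1)$-shift remark: the half-space identity only gives even indices $T^{\ptohalfspace{}}_{2n-2}$, so odd indices need the monotonicity $T^{\ptohalfspace{}}_{N}\le T^{\ptohalfspace{}}_{N+1}$ plus the $\vep$-slack; and for the half-space right-tail upper bound the Laguerre parameter is $m'=n'-\tfrac12$, a non-integer, which is not literally inside the ``natural numbers $(m,n)$'' hypothesis of Theorem~\ref{t: right tail upper bound for beta geq 2}(ii) — this too is handled by the stochastic monotonicity of $\lambda_1^{(n,m,L,\beta)}$ in $m$ (compare to $m=n$) and absorbing the resulting $O(n^{-1/6})$ shift in $t'$ into $\vep$.
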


\begin{remark}
    In the proofs later we will use some variants of the last passage times, where we will sometimes remove the initial vertices or final vertices from the definition of passage times. It can be checked easily that the above tail estimates are true for these variants as well with the same constants.
\end{remark}


The following discussion applies equally to all four of the sequence of random variables $T^{\linetop{}}_{n}, T^{\ptoline{}}_{n}$, $ T^{\ptop{}}_{n}$ and $T^{\ptohalfspace{}}_{n}$, but for concreteness we shall stick to the case of $T^{\ptop{}}_{n}$ for now. Consider the coupled sequence of random variables $\left\{T^{\ptop{}}_{n}\right\}_{n\ge 0}$. Given the weak convergence result above \eqref{eq: LPP TW convergence}, one natural question is to consider the $\limsup$ and $\liminf$ of the sequence $\biggl\{\frac{T^{\ptop{}}_{n}-4n}{n^{1/3}}\biggr\}$. By a zero-one law, one might expect that there would be some functions $g_{+}(n)$ and $g_{-}(n)$ diverging as $n\to \infty$ such that 
$$\limsup_{n\to\infty} \frac{T^{\ptop{}}_{n}-4n}{g_{+}(n)} \quad \mbox{ and } \quad \liminf_{n\to\infty} \frac{T^{\ptop{}}_{n}-4n}{g_{-}(n)}$$
 converge almost surely to non-zero finite constants. The question then is to find $g_{+}(n)$ and $g_{-}(n)$ and the limiting constants.

The question of studying limit laws for extrema of coupled sequence of random variables converging to the Tracy-Widom distribution goes back to 
Paquette and Zeitouni \cite{PZ17} who considered the largest eigenvalue $\lambda_{n}$ of the top $n\times n$ submatrix of an infinite GUE matrix which is centered and scaled to converge to the GUE Tracy-Widom distribution. They established a ``law of fractional logarithm" showing that there is an explicit constant $c\in (0,\infty)$ such that 
$$\limsup_{n\to \infty} \frac{\lambda_n}{(\log n)^{2/3}}=c$$
almost surely whereas 
$$\liminf_{n\to \infty} \frac{\lambda_n}{(\log n)^{1/3}}\in (c_1,c_2)$$
almost surely for some negative constants $c_1,c_2$ \footnote {One of the obstacles which made Paquette-Zeitouni $\liminf$ result weaker was the lack of optimal tail estimates in the lower tail which are now available in Theorems \ref{t: ltub}, \ref{t: ltlb}.}. Inspired by \cite{PZ17}, Ledoux \cite{L18} asked the same question in the LPP set-up as described above. 

Even though $T^{\ptop{}}_{n}$s are not sums of i.i.d. random variables, they possess a super-additive structure (i.e., $T^{\ptop{}}_{{m+n}}\ge T^{\ptop{}}_{{n}}+T_{\mathbf{n},\mathbf{m+n}}$ and the second term has the same law as $T^{\ptop{}}_{m}$ and is independent of $T^{\ptop{}}_{n}$\footnote{Notice that the inequality is not exactly true because of the fact that the weight of the vertex $\mathbf{n}$ is counted twice, but as we shall see this can be ignored.}). Therefore it is instructive to compare this problem with the classical law of iterated logarithms for the simple symmetric random walk on integers. Recall that for a simple symmetric random walk $S_{n}$ we have, almost surely, 
$$\limsup_{n\to \infty} \frac{S_{n}}{n^{1/2}\sqrt{2\log\log n}}=1,\quad \limsup_{n\to \infty} \frac{S_{n}}{n^{1/2}\sqrt{2\log\log n}}=-1.$$
The factor $\sqrt{2\log \log n}$ can be explained as follows. To have $\max\limits_{\frac{n}{2}\le m \le n} \frac{S_{m}}{\sqrt{m}}>x$, one roughly needs to have $\frac{S_{m}}{\sqrt{m}}>x$ at one of roughly $\log n$ many approximately independent scales, e.g., at $m\in \{n,n/2,n/4,\ldots \}$. By Gaussian approximation of Binomial, each of these have probability $\approx e^{-x^2/2}$ and we get $x=\sqrt{2\log \log n}$ by setting $e^{-x^2/2}=\frac{1}{\log n}$. Looking at the tail estimates for $T^{\ptop{}}_{n}$, the above heuristic would suggest that for $T^{\ptop{}}_{n}$ we should still have a law of iterated logarithms, with scaling for the $\limsup$ and $\liminf$ given by $(\log \log n)^{2/3}$ and $(\log \log n)^{1/3}$ respectively and the constants would be determined in the same fashion by the constants in front of the exponents in Theorem \ref{t: passage time tail estimates}. This is what Ledoux argued in \cite{L18} and using the weaker tail estimates from \cite{LR10}, he showed  that 
\begin{align}\label{eq: Ledoux LPP result}
  \limsup_{n\to\infty} \frac{T^{\ptop{}}_{n}-4n}{2^{4/3}n^{1/3}(\log \log n)^{2/3}}=C_1  
\end{align}
almost surely. In fact, assuming the results in Theorems \ref{t: right tail upper bound for beta geq 2}, \ref{t: rtlb}, \cite{L18} also showed that $C_1=\left(\frac{3}{4}\right)^{2/3}$. For the lower tail, \cite{L18} showed that 
$$\liminf_{n\to\infty} \frac{T^{\ptop{}}_{n}-4n}{2^{4/3}n^{1/3}(\log \log n)^{1/3}}>-\infty$$
which was later improved in \cite{BGHK21} to show 
\begin{align}\label{eq: BGHK LPP result}
  \liminf_{n\to\infty} \frac{T^{\ptop{}}_{n}-4n}{2^{4/3}n^{1/3}(\log \log n)^{1/3}}=-C_2  
\end{align}
almost surely for $C_2\in (0,\infty)$. In fact, it was conjectured in \cite{BGHK21,L18} that $C_2=(12)^{1/3}$. Our next theorem proves this conjecture and also establishes similar results for point-to-line and half space exponential LPP.

We denote the two functions defining the upper and lower functions for the law of iterated logarithm by
$$
g_{+}(n)=2^{\frac43}n^{\frac13}(\log \log n)^{\frac23} \qquad\mbox{ and }\qquad  g_-(n)=2^{\frac43}n^{\frac13}(\log \log n)^{\frac13}.
$$
With this notation, we present the result.
\begin{theorem}
\label{t: LILs}
Let $T_n^*$ be any of $T_n^{\linetop{}}$ or $T_n^{\ptoline{}}$ or $T_n^{\ptop{}}$ or $T_n^{\ptohalfspace{}}$ and correspondingly let $\beta$ be $1$ or $1$ or $2$ or $4$ respectively. We have 
\begin{enumerate}[label=(\roman*), font=\normalfont]
    \item $\limsup \limits_{n \rightarrow \infty} \frac{T_n^*-4n}{g_+(n)}=\l(\frac 34 \r)^{2/3} a.s. $
    \item $\liminf \limits_{n \rightarrow \infty} \frac{T_n^*-4n}{g_-(n)}=-(6\beta)^{1/3} a.s.$
\end{enumerate}
    
\end{theorem}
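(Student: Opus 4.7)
The proof splits into four separate inequalities (upper and lower bounds for both $\limsup$ and $\liminf$). Three of them follow from Borel--Cantelli arguments applied along a geometric (or slightly denser) subsequence $n_k = \lfloor \rho^k \rfloor$, combined with the sharp tail bounds of Theorem \ref{t: passage time tail estimates}; the fourth, namely the $\liminf$ upper bound, is the main difficulty and requires geometric inputs about geodesics in LPP. For concreteness I describe the plan for $T_n^{\ptop{}}$ (with $\beta=2$); the remaining three cases (for which $\beta = 1,1,4$) follow with minor modifications using the corresponding super-additivity structure of line-to-point, point-to-line, and half-space LPP.

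\textbf{Borel--Cantelli first lemma (two easy bounds).} For the $\limsup$ upper bound, the critical scale $t = (3/4)^{2/3}(\log\log n_k)^{2/3}$ makes the exponent in Theorem \ref{t: passage time tail estimates}(i) equal $(4/3)(3/4)(1-\varepsilon)\log\log n_k = (1-\varepsilon)\log\log n_k$, giving probabilities summable in $k$. Analogously, for the $\liminf$ lower bound the scale $t = (6\beta)^{1/3}(\log\log n_k)^{1/3}$ equates the exponent in Theorem \ref{t: passage time tail estimates}(ii) with $(1-\varepsilon)\log\log n_k$. Interpolation between checkpoints is delicate: since $T_n^{\ptop{}}$ grows at rate $4n$, one cannot take $\rho$ constant. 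I will choose a subsequence with $n_{k+1}/n_k - 1 \sim (\log\log n_k)^{2}/\log n_k$ so that, writing $T_n^{\ptop{}} \le T_{n_{k+1}}^{\ptop{}} - T^{\ptop{}}_{\mathbf n, \mathbf{n_{k+1}}}$ (super-additivity) for $n\in[n_k, n_{k+1}]$ and applying the upper bound on the lower tail to the increment $T^{\ptop{}}_{\mathbf n, \mathbf{n_{k+1}}}$, the interpolation error becomes $o(g_+(n_k))$, while the total number of checkpoints remains small enough to preserve summability.

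\textbf{$\limsup$ lower bound.} This uses super-additivity to produce independent increments. With $n_k = \rho^k$ ($\rho$ large), let $D_k = T^{\ptop{}}_{\mathbf{n_k}, \mathbf{n_{k+1}}}$; these are mutually independent. Applying the lower bound on the upper tail (Theorem \ref{t: passage time tail estimates}(i)) at scale $t \approx (3/4)^{2/3}(1-\varepsilon)(\log\log n_{k+1})^{2/3}$ gives $\P(D_k - 4(n_{k+1}-n_k) \ge (3/4)^{2/3}(1-\varepsilon)g_+(n_{k+1})) \ge (\log n_{k+1})^{-(1-\varepsilon')}$, which is not summable. The second Borel--Cantelli lemma then produces infinitely many such $k$. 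Combined with the a.s.\ lower bound $T^{\ptop{}}_{n_k} \ge 4n_k - (6\beta)^{1/3}(1+\eta) g_-(n_k)$ already established via the Borel--Cantelli first argument above (and noting $g_-(n_k) = o(g_+(n_{k+1}))$), super-additivity gives the desired infinitely-often lower bound.

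\textbf{$\liminf$ upper bound (the main obstacle).} Here one wants $T^{\ptop{}}_{n_k} \le 4n_k - (6\beta)^{1/3}(1-\varepsilon) g_-(n_k)$ to occur infinitely often. The tail lower bound (Theorem \ref{t: passage time tail estimates}(ii)) gives the required non-summable probabilities $(\log n_k)^{-(1-\varepsilon')}$, but there is no natural decomposition of $T_n^{\ptop{}}$ into independent pieces whose smallness implies $T_n^{\ptop{}}$ is small: smallness of $T_n^{\ptop{}}$ is a statement about \emph{every} up-right path having small weight, not about a single one. My plan is to use geodesic geometry: fix a sparse $n_k$, and establish that the event $\{T^{\ptop{}}_{n_k} \le 4n_k - c\, g_-(n_k)\}$ is, up to a small loss, equivalent to the event that a \emph{restricted} passage time $T^{(k)}$ (taken over paths constrained to a strip around the diagonal of width $n_k^{2/3 + o(1)}$) is small, using transversal-fluctuation results to bound $T^{\ptop{}}_{n_k} - T^{(k)}$. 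Since these strips can be arranged to overlap only in a small initial region (e.g.\ by replacing the starting point $\mathbf 0$ by a slightly shifted point $\mathbf{m_k}$ and absorbing the difference via the tail bounds), the events are approximately independent. A Kochen--Stone-type argument (or a direct second-moment estimate controlling the covariance via the sharp tail bounds on both tails) will then yield the desired infinitely-often conclusion.

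The hardest technical ingredient is verifying the approximate independence in the last step; this will combine the new tail estimates with a combination of known localization results and some new lemmas on coalescence/separation of geodesics for well-separated endpoints. For the half-space and line-to-point cases the geometric step requires extra care because of the diagonal constraint (resp.\ the free starting line), but can be handled via reflection symmetry and by converting line-to-point events into point-to-point events on a slightly offset grid.
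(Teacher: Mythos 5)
Your proposal captures the correct high-level architecture — sharp tail bounds plus Borel--Cantelli along a subsequence for three of the four inequalities, with geodesic geometry entering only for the $\liminf$ upper bound — but two of the ``easy'' cases are dismissed too quickly, and the claimed route would fail.

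\textbf{$\limsup$ upper bound.} Your assertion that ``one cannot take $\rho$ constant'' is mistaken, and your proposed dense subsequence $n_{k+1}/n_k-1 \sim (\log\log n_k)^2/\log n_k$ causes the argument to break. With that density, $\log n_k$ grows roughly like $\sqrt{k}\log k$, so the checkpoint probability at critical scale, which is $(\log n_k)^{-(1+\delta')}$ with $\delta' = (1+\delta)^{3/2}(1-\varepsilon)-1$, behaves like $k^{-(1+\delta')/2}$; for small $\delta$ (hence small $\delta'$) this sum diverges. The resolution (Ledoux's, and the one the paper uses in Lemma~\ref{maximal_inequality for line to point}) is a maximal inequality: $\P\left(\max_{k\le n\le \ell}(T^*_n-4n)\ge t\right)\le c^{-1}\P\left(T^*_\ell-4\ell\ge t\right)$, derived from super-additivity and independence (plus weak convergence to pin down $c$). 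This controls the whole interval $[n_{k-1},n_k]$ by the single endpoint $n_k$ and permits $n_k=[\rho^k]$ with $\rho$ a constant arbitrarily close to $1$, for which the Borel--Cantelli sum converges for every $\delta>0$.

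\textbf{$\liminf$ lower bound.} Here the gap is more serious. You treat this as the same species of Borel--Cantelli argument, with the only issue being to equate the exponent at the checkpoints. But the event $\left\{\min_{n_{k-1}<n\le n_k}(T^*_n-4n)/g_-(n)\le -(6\beta)^{1/3}-\delta\right\}$ is a minimum over a block, and there is no maximal-inequality analogue for the lower tail that reduces it to a single checkpoint. One must decompose it (via super-additivity) into a checkpoint event $\{T^*_{n_{k-1}}-4n_{k-1}\le -((6\beta)^{1/3}+\delta/2)g_-(n_{k-1})\}$ together with an event that the increment $T^*_{\mathbf{n_{k-1}},\mathbf n}-4(n-n_{k-1})$ dips below $-(\delta/2)g_-(n_{k-1})$ for some $n$ in the block, and the latter requires a genuine geometric estimate on the infimum of centered increments over a line segment (Lemma~\ref{lemma: infimum over a line for half space}). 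Moreover, for the half-space model (and hence, via the coupling ordering, for the comparison used to handle all four cases uniformly) the available estimate has exponent only $e^{-c\theta}$ rather than $e^{-c\theta^3}$, and with a geometric subsequence this gives $\P\approx e^{-c'(\log k)^{1/3}}$, which is not summable. A much sparser subsequence $n_k=[e^{k^{1-\varepsilon}}]$ is needed so that $n_{k-1}/(n_k-n_{k-1})\gtrsim k^{\varepsilon}$ boosts the exponent enough to make the sum converge.

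\textbf{$\liminf$ upper bound.} Your plan is spiritually aligned with the paper's: restrict attention to narrow strips using transversal fluctuation, argue approximate independence, and invoke a zero--one law. Two concrete divergences. First, the paper does not shift starting points or run a second-moment/Kochen--Stone argument; instead it defines independent events $\cm_k$ as interval-to-point passage times from an interval $\mathsf I_{k-1}\subset\cl_{2n_{k-1}}$ to $\bo{n_k}$ being small, lets $\tau$ be the last such $k$, and combines $\cm$ with a transversal-fluctuation event $\mathsf{TF}$ and an event $\cn_\tau$ (that the passage time from $\bo 0$ to $\mathsf I_{\tau-1}$ is not too large) via a conditioning argument, finishing with Kolmogorov's $0$--$1$ law. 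Second, the subsequence used here is the factorial-type sequence $n_k=[(k!)^{(1-\varepsilon)^3}]$, chosen so that the intervals $\mathsf I_{k-1}$ (of width $\sim M\sqrt{k}\, n_{k-1}^{2/3}$) are small compared with $(n_k-n_{k-1})^{2/3}$, making the interval-to-point lower tail estimate of Lemma~\ref{typical interval to point passage time} applicable; your Kochen--Stone route would still need to confront this scale-matching issue and the line-to-point/half-space analogues (Lemmas~\ref{local transversal fluctuation for point to line geodesic}, \ref{Interval to full line lemma}, \ref{lemma: interval to line in half space}, \ref{lemma: local transversal fluctuation for half space}).

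Finally, a small observation: the paper proves the $\limsup$ lower bound directly for the half-space model and transfers to the other three via the a.s.\ coupling ordering $T^{\linetop{}}_n\ge T^{\ptop{}}_n\ge T^{\ptohalfspace{}}_n$; your direct point-to-point argument works too, but you would then still need a separate argument for $T^{\ptohalfspace{}}$ (and the paper's direction of comparison is the only one that gives all four cases for free here).
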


    \begin{remark}
        As mentioned already, although $T^{\linetop{}}_{n}$ and $T^{\ptoline{}}_{n}$ have the same distribution \eqref{eq: distributional equalities}, the coupled sequences $\left\{T^{\linetop{}}_{n}\right\}_{n\geq 0}$ and $\l\{T^{\ptoline{}}_{n}\r\}_{n\geq 0}$ do not have the same distribution. Hence the equalities in the $\limsup$ and $\liminf$ statements for the point-to-line and line-to-point case in Theorem \ref{t: LILs} are non trivial.
        \end{remark}

We note that, in the theorems above, once we have the moderate deviation estimates, the result for the $\limsup$ is relatively easy and it mainly uses the super-additivity of the last passage time. This was already observed in \cite{L18} for the point-to-point case. However, even assuming the optimal estimate for the lower tail, the $\liminf$ results are non-trivial. We use the last passage percolation representation in a crucial way. Our arguments for the $\liminf$ results require a combination of old and new results especially about the geometry of geodesics in exponential LPP. 

Note also one may be able to prove several variants of Theorem \ref{t: LILs} by following our arguments. For example, instead of looking at the passage times along the diagonal direction, one might also consider the sequence of passage times along any non-axial direction. Since our estimates for LUE are available as long as $m/n$ bounded, one should be able to prove a version of Theorem \ref{t: LILs} in the set-up as well. One can also consider different exactly solvable models of planar last passage percolation. Sharp moderate deviation estimates for Poissonian LPP \cite{LM01,LMS02} and Geometric LPP \cite{BDMMZ01} were already available. Using the correspondence between the $\lambda_1^{(n,H,2)}$ and Brownian LPP (see e.g.\ \cite{BY01,OY02})
our results also provide the sharp tail estimates for Brownian LPP as well. So the appropriate variants of Theorem \ref{t: LILs} for all these models should follow from our arguments as well.

Following the works \cite{L18,BGHK21}, there has been some interest in investigating LIL for other KPZ models and law of iterated logarithms has been derived for certain limiting models (where one point estimates were already available). In particular, using Gibbsian line ensemble techniques, LIL for KPZ equation (both $\limsup$ and $\liminf$) was derived in \cite{DG23} .
LIL for the upper tail was derived in the set-up of KPZ fixed point in \cite{DGL22}. However, as far as we are aware, our results are the first LIL results for a lattice model in the KPZ class. 

Finally, we point out that while we illustrate the usefulness of our sharp moderate deviation results by establishing the LIL for exponential LPP, this is by no means the only application of the optimal moderate deviation estimates in the LPP set-up. We expect these estimates to be useful in many other problems in exponential LPP and related areas, just as the results of \cite{LR10} has been extensively applied in a large number of LPP problems (see e.g.\ \cite{BBB23, BBF22, BGZ21} and many references therein). We describe a couple of such applications which are contained in forthcoming works. In \cite{BB24}, using the moderate deviation estimates proved here together with the fact that Airy$_1$ and Airy$_2$ processes arise as scaling limits for point-to-point and point-to-line exponential LPP, second and third author of the current paper obtain limit theorems for the extrema of the processes, extending and complementing a recent work of Pu \cite{P23}. In another upcoming work \cite{AB24}, the second author together with Agarwal use our moderate deviation results to obtain sharp estimates for the probability that the point-to-line geodesic in exponential LPP ends at a given point. We anticipate that these deviation estimates will be useful in questions beyond the LPP set-up as well.


\subsection{Outline of the proofs.} We shall now describe briefly the ideas that go into the proofs of the main results. We will start with the tail estimates for largest eigenvalues in \eqref{eq: Hermite TW convergence}, \eqref{eq: Laguerre TW convergence}. Except for the right tail upper bounds (which are established using a stochastic comparison principle described below), our proofs, at a high level, take a route similar to the one taken in \cite{RRV}, with many additional technical difficulties due to working with a finite matrix. Two of these three estimates (lower bound for the right tail and upper bound for the left tail) can be established by constructing suitable test functions, while the left tail lower bound uses a discretized version of the diffusion employed in \cite{RRV}. In all these three cases, we modify $H_{n,\beta}$ and $L_{n,m,\beta}$ by replacing $\chi_{\beta(n-i)}$ random variables with $\sqrt{\beta\l(n-i\r)}+\frac{\zeta_i}{\sqrt{2}}$ for all $i\leq p$, where $\zeta_i$ are i.i.d.\ $N\l(0,1\r)$ random variables. We denote these modified matrices as $\wih{H}_{n,\beta,p}$ and $\wih{L}_{n,m,\beta,p}$. We use KMT theorem to couple $\chi_k$ random variables in tridiagonal matrices with Gaussians and then use Gershgorin circle theorem along with Weyl's inequality to control the difference in the spectrums of the modified and the original tridiagonal matrices. This is Proposition \ref{eq: estimate of off diagonals of the difference} proved in the beginning of Section \ref{s: rtlb}. 
Below we outline different pieces of the arguments for the Hermite ensemble in more detail. Similar arguments hold for Laguerre ensemble.

\subsubsection{\textbf{Upper bounds for right tail of $\beta$-ensembles.}} If one has sharp right tail upper bounds for any $\beta_1$, then one can obtain the same for any  $\beta_2>\beta_1$
 using the following stochastic domination result from \cite{JB23}:  
\begin{align*}
\lambda_{\text{max}}\l(\sqrt{\beta_1}H_{\left\lceil{\beta_2 n/\beta_1 }\right\rceil,\beta_1}\r)\succeq\lambda_{\text{max}}\l(\sqrt{\beta_2}H_{n,\beta_2}\r).
  \end{align*}
Sharp upper bounds for right tails were obtained for  $\beta=1$ in \cite[Lemma 1.1]{BX23}, although the range for $t$ is up-to $n^{1/4}$, instead of $n^{2/3}$. This allows for right tail upper bounds for the range $1\leq\beta<2$, for the above mentioned range of $t$. However, we can obtain right tail upper bounds for the range of $t$ up to $\delta n^{2/3}$ for $\beta=2$, and hence for all $\beta\ge 2$. The proof for $\beta=2$ uses the connection between GUE matrices and Brownian last passage percolation, together with the  super-additivity of passage times and a large deviations result for the largest point in $\beta$ ensembles. The proofs are in Section \ref{s: upper bound right tail}.

\subsubsection{\textbf{Lower bounds for right tails of modified $\beta$-ensembles.}} We modify the matrix $H_{n,\beta}$ to $\wih{H}_{n,\beta,p}$, by replacing  $p\asymp  n^{1/3}$ of the $\chi$ random variables with  appropriate Gaussian random variables. Then we  construct a vector $v$ and use Gaussian tail bounds to conclude that  
\begin{align*}
    \< \wih{H}_{n,\beta,p}v,v\>\geq\l(2\sqrt{n}+tn^{-1/6}\r) \<v,v\>
\end{align*}
occurs with probability at least as that of the lower bound of Theorem \ref{t: rtlb}. Using variational  characterisation of $\lambda_{\text{max}}$, we have the desired lower bound for right tail of $\lambda_{\text{max}}(\wih{H}_{n,\beta,p})$ as defined in the beginning of this subsection. The vector $v$ we construct is such that the non-zero part of the vector is simply the discretized form of the function used in \cite{RRV} to obtain lower bound for right tails of $TW_{\beta}$. We then use the K.M.T. theorem (in Proposition \ref{eq: estimate of off diagonals of the difference}) to argue that the lower bound carries over to the right tails of  $\lambda_{\text{max}}(H_{n,\beta})$. The proofs are  in Section \ref{s: rtlb}.

\subsubsection{\textbf{Upper bounds for left tail of modified $\beta$-ensembles.}} Similar to the right tail lower bound, we study the largest eigenvalue of $\wih{H}_{n,\beta,p}$ first. Again we exhibit a deterministic vector $v$ and use Gaussian bounds to show  
\begin{align*}
     \< \wih{H}_{n,\beta,p}v,v\>\leq \l(2\sqrt{n}-tn^{-1/6}\r)\< v,v\>
\end{align*}
occurs with probability at most as in the upper bound of Theorem \ref{t: ltub}. Using variational  characterisation of $\lambda_{\text{max}}$, we have the desired upper bound for left tail of $\lambda_{\text{max}}(\wih{H}_{n,\beta,p})$. The vector $v$ we construct is such that the non-zero part of the vector is simply the discretized form of the function used in \cite{RRV} to obtain upper bound for left tails of $TW_{\beta}$. We then use Proposition \ref{eq: estimate of off diagonals of the difference} to argue that this also implies the desired upper bound for left tails of $\lambda_{\text{max}}(H_{n,\beta})$. We prove these with details in Section \ref{s: ltub}.

\subsubsection{\textbf{Lower bounds for left tail of modified $\beta$-ensembles.}} This is the most technically challenging part of our arguments.   We use the fact that for a tridiagonal matrix $A$ with positive off-diagonal entries, if we define the vector $v\in \R^{n+1}$ recursively such that $Av=\lambda v$ with $v(1)>0$ and the vector does not change sign, then $\lambda_{\text{max}}\leq \lambda$. 

Working again with $\wih{H}_{n,\beta,p}$, we use this fact and show that the probability of roughly the top $tn^{1/3}$ entries of the vector defined using the above recursion do not change sign is at least the optimal lower bound of Theorem \ref{t: ltlb}. This shows that $\lambda_{\text{max}}(\wih{H}_{n,\beta,p}^{top})\leq \l(2\sqrt{n}-tn^{-1/6}\r)$ occurs with the desired lower bound. Here $\wih{H}_{n,\beta,p}^{top}$ is the top $tn^{1/3}$ sub-matrix of $H_{n,\beta,p}$. Such analysis is possible only for the top $tn^{1/3}$ sub-matrix as we have replaced the $\chi_k$ random variables by Gaussians in that range. This is the main difficulty in discretizing the diffusion technique used in \cite{RRV} to obtain lower bounds for \eqref{eq: TW left tail bounds}.
We obtain similar bounds for the largest eigenvalue of bottom sub-matrix. Although the top and bottom sub-matrices are independent (for Hermite ensemble), the non-zero off-diagonal entries excluded from both top and bottom sub-matrices have to be accounted for to combine both bounds for top and bottom sub-matrices (this issue is not present in the limiting diffusion studied in \cite{RRV}). This is done using Proposition \ref{prop: Tridiag resultwith1} along with a spiked random matrix result \cite{BV12}. This allows to prove the desired lower bound for left tail of $\lambda_{\text{max}}(\wih{H}_{n,\beta,p})$. We then use Proposition \ref{eq: estimate of off diagonals of the difference} to argue that this also implies the desired lower bound for left tails of $\lambda_{\text{max}}(H_{n,\beta})$. The details are given in Section \ref{s: ltlb}.

\subsubsection{\textbf{The LILs}}Here we briefly outline the ideas that go into the proofs of the LILs and the new results from LPP geometry that we use in the proofs. These are interesting in their own rights and we expect them to be useful in study of several last passage percolation problems.\\ 
As pointed out before, Ledoux proved \eqref{eq: Ledoux LPP result}
assuming a version of Theorem \ref{t: passage time tail estimates}, (i)  for $T^{\ptop{}}_{n}$. Ledoux's argument relies only on the tail estimates and super-additivity and we shall use variants of the same argument to prove all our $\limsup$ results. 

Proofs of the $\liminf$ statements turn out to be more difficult. 
Combining the result of \cite{L18}, a zero-one law and using the point-to-line passage time to bound the point-to-point passage time, and a weak version of Theorem \ref{t: passage time tail estimates}, (ii) for $T^{\ptop{}}_{n}$, the result \eqref{eq: BGHK LPP result} was shown in \cite{BGHK21}.

However, neither the argument in \cite{L18} nor the one in \cite{BGHK21} gives the correct value of $C_2$ even assuming the optimal tail estimate (in the latter case, this is due to the fact that using point-to-line passage times in stead of point-to-point leads to a loss in the constant, as was already pointed out in \cite{BGHK21}). 

Not surprisingly, it turns out to get the correct constant in the $\liminf$, one needs to utilize the LPP geometry. For all the cases we need certain known results about geodesic geometry from \cite{BGZ21, BBB23} together with some tail estimates for interval-to-line passage times from \cite{BBF22}. 
Similar estimates for interval-to-point passage times, point-to-line geodesics and half space geodesics, which were not in the literature before are proved to establish the $\liminf$ results in Theorem \ref{t: LILs} (see Lemma \ref{lemma: infimum over a line for half space}, \ref{typical interval to point passage time},\ref{local transversal fluctuation for point to line geodesic}, \ref{Interval to full line lemma}, \ref{lemma: interval to line in half space}, \ref{lemma: local transversal fluctuation for half space}).

\subsection*{Organisation of this paper} The rest of the paper is organised as follows. We prove the right tail upper bounds (Theorem \ref{t: right tail upper bound for beta geq 2} and Theorem \ref{t: rtub beta geq 1}) in Section \ref{s: upper bound right tail}, right tail lower bounds (Theorem \ref{t: rtlb}) in Section \ref{s: rtlb}, left tail upper bounds (Theorem \ref{t: ltub}) in Section \ref{s: ltub}, left tail lower bounds (Theorem \ref{t: ltlb}) in Section \ref{s: ltlb}. In each section, the Hermite case is dealt with first, and then the more complicated Laguerre case is tackled. Theorem \ref{t: passage time tail estimates} is a corollary of the tail bounds. We prove the LILs in Section \ref{sec: limsup} and Section \ref{sec: liminf}. The $\limsup$s in Theorem \ref{t: LILs}, are proved in Section \ref{sec: limsup}. The $\liminf$s in Theorem \ref{t: LILs}, are proved in \ref{sec: liminf}. In Section \ref{s: LPP estimates} we prove all the estimates in LPP that we will use to prove the LILs.

\subsection*{Acknowledgements} This work is partly supported by the DST FIST program-2021[TPN-700661].  JB is supported by scholarship from Ministry of Education (MoE). RB is partially supported by a MATRICS grant (MTR/2021/000093) from SERB, Govt.\ of India, DAE project no.\ RTI4001 via ICTS, and the Infosys Foundation via the Infosys-Chandrasekharan Virtual Centre for Random Geometry of TIFR. SB is supported by scholarship from National Board for Higher Mathematics (NBHM) (ref no: 0203/13(32)/2021-R\&D-II/13158). 

\section{Upper bounds for right tails}\label{s: upper bound right tail}
In this subsection we prove Theorem \ref{t: right tail upper bound for beta geq 2} and Theorem \ref{t: rtub beta geq 1}. We first consider the Hermite ensemble.
\subsection{Hermite Ensemble} We first prove the $\beta=2$ case. Note that by \cite[Lemma 1.1]{BX23} we get Theorem \ref{t: right tail upper bound for beta geq 2}, (i) for $\beta=2$ case the range $t \leq n^{1/4}.$ To get the upper bound for the range all the way up-to $\gamma(\vep) n^{2/3}$ for sufficiently small $\vep$, we need the correspondence between the $\lambda_1^{(n,H,2)}$ and the last passage time in Brownian LPP and a super-additivity argument.  The definition of Brownian LPP and the exact correspondence can be found in \cite[Theorem 1]{OY02}. Then, an  application of large deviations and super-additivity will give us Theorem \ref{t: right tail upper bound for beta geq 2}, (i). This approach was already considered in \cite{KJ} and by Ledoux in \cite{L07}. We don't write this in detail for the Hermite case, but we give full details in the Laguerre  $\beta=1$ case.

\begin{proof}[\textbf{Proof of Theorem \ref{t: right tail upper bound for beta geq 2}, (i)}] Once we have Theorem \ref{t: right tail upper bound for beta geq 2}, (i)  for $\beta=2$, the same follows   for any $\beta \geq 2$ because  $\lambda_{\text{max}}\l(H_{n,\beta}\r)\overset{d}{=}\lambda_1^{\l(n,H,\beta\r)}$ and by \cite[Theorem 1]{JB23}
\begin{align}\label{eq:stochasticdominationJB}
\lambda_{\text{max}}\l(\sqrt{2}H_{\left\lceil\frac{\beta n}{2}\right\rceil,2}\r)\succeq\lambda_{\text{max}}\l(\sqrt{\beta}H_{n,\beta}\r).
  \end{align}
Here is a sketch of the argument, see \cite{JB23} for more details.
 
 The matrices $\sqrt{2}H_{\left\lceil\frac{\beta n}{2}\right\rceil,2}$ and $\sqrt{\beta}H_{n,\beta}$ 
 can be coupled so that the off-diagonal entries in the top-left $n\times n$  principal sub-matrix of the former are strictly larger than the corresponding entries in the latter, and so that the diagonal entries are equal. As the off-diagonal entries are positive, the stochastic domination \eqref{eq:stochasticdominationJB} follows.
  
   Hence we can write,
  \begin{align}\label{eq: right tail comparison}
       \P \l( \lambda_1^{\l(n,H,\beta\r)} \geq 2 \sqrt{n}+tn^{-1/6}\r)&=     \P \l( \lambda_{\text{max}}\l(\sqrt{\beta}H_{n,\beta}\r) \geq 2 \sqrt{n\beta}+\sqrt{\beta}tn^{-1/6}\r)\nonumber\\
       &\leq \P \l( \lambda_{\text{max}}\l(\sqrt{2}H_{\left\lceil\frac{\beta n}{2}\right\rceil,2}\r) \geq 2 \sqrt{n\beta}+\sqrt{\beta}tn^{-1/6}\r)\nonumber\\
       & \leq \P \l( \lambda_{\text{max}}\l(H_{\left\lceil\frac{\beta n}{2}\right\rceil,2}\r) \geq 2 \sqrt{n\beta/2}+t\sqrt{\beta/2}n^{-1/6}\r).
  \end{align}
Hence using the result for $\beta=2$ case, 
we get that for any $\beta\geq 2$ and $\vep>0$, we can choose $n_{\vep,\beta}$ such that for all $n\geq n_{\vep,\beta}$ and for all $t$ with $1\leq t\leq \gamma(\vep)n^{\frac{2}{3}}$, the following holds (note that the difference between $\sqrt{\beta n/2}$ and $\sqrt{\lceil \beta n/2\rceil}$ is $O(1/\sqrt{\beta n})$ and can be  absorbed into $\vep$):
\begin{align*}
    \P \l( \lambda_1^{\l(n,H,\beta\r)} \geq 2 \sqrt{n}+tn^{-1/6}\r)& \leq \P \l( \lambda_{\text{max}}\l(H_{\left\lceil\frac{\beta n}{2}\right\rceil,2}\r) \geq 2 \sqrt{n\beta/2}+t\sqrt{\beta/2}n^{-1/6}\r)\nonumber\\
     &\leq \exp\l(-\frac{2\beta}{3}t^{3/2}\l(1-\vep\r)\r).
\end{align*}
This completes the proof.
\end{proof}
\begin{proof}[\textbf{Proof of Theorem \ref{t: rtub beta geq 1}, (i)}]
In Theorem \ref{t: rtub beta geq 1} the $\beta=1$ case we get from \cite[Lemma 1.1]{BX23}. For $1 \leq \beta <2$ range we apply the similar stochastic domination argument. This completes the proof.
\end{proof}
\begin{remark}  Unlike in the case for $\beta=2,$ we were unable to find a correspondence between $\lambda_1^{(n,H,1)}$ and some LPP model. Therefore, we were unable to apply the super-additivity argument or large deviation asymptotics as we have applied in the other right tail upper bound proofs. But using the moment recursion of \cite{Ledoux09}, we can get optimal right tail bound for the range of $t\leq \gamma(\varepsilon)n^{2/3}$ for $\beta=1$. Then applying the above stochastic domination argument, Theorem \ref{t: rtub beta geq 1}, (i) holds for the same range of $t$ as in Theorem \ref{t: right tail upper bound for beta geq 2}, (i). As it is a completely different argument from the one used in $\beta=2$ case, we skip the details here.
\end{remark}

\subsection{Laguerre Ensemble}
The $\beta=2$ in Theorem \ref{t: right tail upper bound for beta geq 2}, (i) case can be found in \cite{KJ,L07,L18} using a similar super-additivity argument and large deviation asymptotics. We will also give a proof in the Laguerre $\beta=1$ case.
 \begin{proof}[\textbf{Proof of Theorem \ref{t: right tail upper bound for beta geq 2}, (ii)}]
 Once we have Theorem \ref{t: right tail upper bound for beta geq 2}, (ii) for $\beta=2$ case, as before using stochastic domination argument in the proof of \cite[Theorem 3]{JB23} we observe that 
 \[
 2\lambda_{\max}\left(L_{\lceil \frac{\beta n}{2}\rceil,\lceil \frac{\beta n}{2} \rceil \frac mn,2} \right) \succeq
 \beta \lambda_{\max} \left(L_{n,m,\beta} \right).\]
 Now, applying same argument as we did in the Hermite case we get Theorem \ref{t: right tail upper bound for beta geq 2}, (ii) for all $\beta \geq 2.$
 \end{proof}
 We next prove Theorem \ref{t: rtub beta geq 1}, (ii) for $\beta=1$ case. We use the distributional equality of point-to-line last passage time with $\frac 12 \lambda_{\max} \left(L_{2n-1,2n,1} \right)$ \eqref{eq: distributional equalities}.
 \begin{lemma}
 \label{l: large deviation proof for beta=1}
 For any $\vep>0$ there exists $\delta>0$ and $n$ sufficiently large and $t \leq \delta n^{2/3}$ sufficiently large
 \[
 \P\l(T_n^{\ptoline{}} \geq 4n+ 2^{4/3}tn^{1/3}\r) \leq \exp\l(-\frac{4}{3}t^{3/2}\l(1-\vep\r)\r).
 \]
 \end{lemma}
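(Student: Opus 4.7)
The plan is to adapt the superadditivity-plus-large-deviations scheme used for the point-to-point passage time by Johansson \cite{KJ} and Ledoux \cite{L07,L18} (and sketched for the Hermite $\beta=2$ case via Brownian LPP) to the point-to-line setting in exponential LPP.

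First, I would establish a superadditivity inequality for the point-to-line passage time. For fixed $n, m \geq 1$, let $v^* \in \cl_{2n}$ denote the endpoint of the point-to-line geodesic $\Gamma_n^{\ptoline{}}$. The key geometric observation is that for any vertex $v = (a, 2n-a) \in \cl_{2n}$, the set of lattice points on $\cl_{2(n+m)}$ reachable from $v$ by up-right paths is precisely $v + \{(k, 2m-k) : 0 \leq k \leq 2m\}$, so by translation invariance $T_{v, \cl_{2(n+m)}} \overset{d}{=} T_m^{\ptoline{}}$. Applying this with $v = v^*$ and splitting any path through $v^*$ (subtracting the doubly-counted vertex weight $\zeta_{v^*}$) yields
\[
T_{n+m}^{\ptoline{}} \geq T_n^{\ptoline{}} + \widetilde{T}_m,
\]
where $\widetilde{T}_m := T_{v^*, \cl_{2(n+m)}} - \zeta_{v^*}$ is independent of $T_n^{\ptoline{}}$ (they use disjoint vertex weights) and $\widetilde{T}_m + \zeta \overset{d}{=} T_m^{\ptoline{}}$ for an independent $\zeta \sim \mathrm{Exp}(1)$. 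Iterating, $T_{nm}^{\ptoline{}} \geq \sum_{i=1}^{m} T_n^{(i)} - \sum_{i=1}^{m-1}\zeta_i$, where $T_n^{(i)}$ are i.i.d.\ copies of $T_n^{\ptoline{}}$ and the $\zeta_i$ are independent Exp(1); by Chernoff $\sum_i \zeta_i$ concentrates around $m-1$, so this error is $O(m)$ with overwhelming probability.

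Using independence,
\[
\P\l(T_n^{\ptoline{}} \geq 4n + 2^{4/3}tn^{1/3}\r)^{m} \leq \P\l(T_{nm}^{\ptoline{}} \geq 4nm + 2^{4/3}tmn^{1/3} - O(m)\r) + e^{-\Omega(m)}.
\]
Taking logarithms, dividing by $n$, and sending $m \to \infty$ gives
\[
\tfrac{1}{n}\log \P\l(T_n^{\ptoline{}} \geq 4n + 2^{4/3}tn^{1/3}\r) \leq -J\l(4 + 2^{4/3}tn^{-2/3}\r),
\]
where $J(x) := -\lim_{N \to \infty} N^{-1}\log \P(T_N^{\ptoline{}} \geq xN)$ exists by Fekete's lemma. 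I would then identify $J(x)$ with Johansson's point-to-point LDP rate $J^{\ptop{}}(x)$ for $x > 4$: the deterministic bound $T_N^{\ptoline{}} \geq T_N^{\ptop{}}$ gives $J \leq J^{\ptop{}}$, while the union bound
\[
\P(T_N^{\ptoline{}} \geq xN) \leq (2N+1)\max_{v \in \cl_{2N}} \P(T_{\bo{0}, v} \geq xN),
\]
combined with the fact that $(\sqrt{k}+\sqrt{2N-k})^2$ is maximized at $k = N$ (so the maximizer is the diagonal point $v = \bo{N}$ for $x > 4$), gives $J \geq J^{\ptop{}}$. Finally, Johansson's explicit edge asymptotic reads $J^{\ptop{}}(4+\eta) = \tfrac{1}{3}\eta^{3/2}(1+o(1))$ as $\eta \to 0^+$; substituting $\eta = 2^{4/3}tn^{-2/3}$ (small for $t \leq \delta n^{2/3}$ with $\delta$ small),
\[
n \cdot J\l(4 + 2^{4/3}tn^{-2/3}\r) = \tfrac{n}{3}\l(2^{4/3}tn^{-2/3}\r)^{3/2}(1+o(1)) = \tfrac{4}{3}t^{3/2}(1+o(1)),
\]
delivering the claimed bound.

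The main obstacle is the bookkeeping in the superadditivity step, namely verifying the independence of the two halves (the random endpoint $v^*$ is measurable with respect to the first strip's weights, but by translation invariance the conditional law of $T_{v^*, \cl_{2(n+m)}}$ does not depend on $v^*$) and controlling the accumulated $O(m)$ error from iterated single-vertex overlaps: this error contributes a relative correction of order $J'(4+\eta) \cdot O(1) = O(t^{1/2}/n^{1/3})$ to the final exponent, which is absorbed into $\vep$ once $t$ is sufficiently large. A secondary technical point is to verify that Johansson's edge asymptotic is uniform enough in $\eta$ to apply at $\eta = \eta(n) \to 0$ on the moderate-deviation scale.
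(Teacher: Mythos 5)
Your proposal follows the same superadditivity-plus-LDP skeleton as the paper but arrives at the rate function by a genuinely different path. The paper computes the rate directly for the Laguerre $\beta=1$ ensemble: it writes down the joint eigenvalue density of $\frac{1}{2n-1}L_{2n-1,2n,1}$, verifies the partition-function hypothesis of \cite[Theorem 2.6.6]{AGZ09} via Selberg's integral, extracts the edge asymptotic $J(4+\vep)/\vep^{3/2}\to 1/6$, and then invokes the exact distributional identity $\frac12\lambda_{\max}(L_{2n-1,2n,1})\overset{d}{=}T_{n-1}^{\ptoline{}}+\mathrm{Exp}(1)$. You instead bypass the $\beta=1$ RMT computation entirely: you extract the existence of the limit via Fekete, sandwich the point-to-line rate between the known point-to-point (Johansson, $\beta=2$) rate using $T_N^{\ptoline{}}\ge T_N^{\ptop{}}$ on one side and a union bound over $v\in\cl_{2N}$ on the other, and then quote the $\beta=2$ edge asymptotic $J^{\ptop{}}(4+\eta)\sim\frac13\eta^{3/2}$. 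Both routes yield the same constant $\frac43$ after plugging $\eta=2^{4/3}tn^{-2/3}$, and both face the same technical uniformity issue when applying the edge asymptotic on a moderate-deviation scale.

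Two points worth flagging. First, your superadditivity bookkeeping is more laborious than necessary: the paper simply redefines $T_n^{\ptoline{}}$ for the duration of the proof to exclude the initial vertex, which makes the decomposition $T_{2n}^{\ptoline{}}=T_n^{\ptoline{}}+T^{\ptoline{}}_{v_n,\cl_{4n}}$ exact, independent, and i.i.d.\ with no accumulated $O(m)$ correction to absorb; your version with the subtracted $\zeta_{v^*}$ and the Chernoff control is correct but strictly more work. Second, and more substantively, your identification $J\ge J^{\ptop{}}$ rests on the assertion that $\max_{v\in\cl_{2N}}\P(T_{\bo 0,v}\ge xN)$ is attained (or dominated, up to the polynomial factor) at $v=\bo N$. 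Your stated reason — that the time constant $(\sqrt k+\sqrt{2N-k})^2$ peaks at $k=N$ — captures the right intuition, but the step from a larger time constant to a larger upper-tail probability is not automatic: it requires that Johansson's direction-dependent rate function $\eta\mapsto J^{(\gamma)}(4+\eta)$ is monotone in the right way across slopes $\gamma$, which is true and extractable from \cite[Theorem 1.6]{KJ} but is an extra ingredient that needs a citation or a short argument. The paper's route avoids this entirely because it gets the point-to-line rate directly from the $\beta=1$ ensemble, so nothing about off-diagonal directions ever enters.
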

 Note that the above lemma was already proved in \cite[Lemma A.4]{BBF22} but for the range $t \ll n^{2/9}.$ Here we give an alternate proof and for the range $t \leq \delta n^{2/3}$ for sufficiently small $\delta$. We now provide a proof of Lemma \ref{l: large deviation proof for beta=1}. 
 \begin{proof}
     We fix an $n$ and we claim the following. For any $m \geq 1, $
     \begin{equation}
     \label{eq: claim product}
     \P \l(T_n^{\ptoline{}} \geq 4n+ 2^{4/3}t n^{1/3} \r) ^m \leq \P\l(T_{mn}^{\ptoline{}} \geq 4mn + 2^{4/3}tmn^{1/3}\r).
     \end{equation}
     Recall the definition of $T_n^{\ptoline{}}$. For this proof in the definition of point-to-line passage time we have excluded the initial vertex $\bo{0}$. We prove the above claim with this definition. \\ 
     Let $v_{n}$ be the random vertex on the line $x+y=2n$ hit by the geodesic for $T_{2n}^{\ptoline{}}$ and $T_{v_n,\cl_{4n}}^{\ptoline{}}$ be the last passage time such that $T_{2n}^{\ptoline{}}= T_{n}^{\ptoline{}}+T^{\ptoline{}}_{v_n,\cl_{4n}}$. Note that these two random variables are i.i.d. Hence we have 
     \begin{align*}
     \P \l(T_n^{\ptoline{}} \geq 4n+ 2^{4/3}t n^{1/3} \r) ^2 \leq \P\l(T_{2n}^{\ptoline{}} \geq 8n + 2^{4/3}.2tn^{1/3}\r).
     \end{align*}
     By similar argument for $m> 2$, the claim follows. 
We now look at the joint eigen value density of the matrix $\frac {1}{2n-1} L_{2n-1,2n,1}.$ The density is as follows
 \[
 \frac{1}{Z_V^{2n-1}}\prod_{k=1}^{2n-1}e^{-\frac 12\left(2n-1\right) \lambda_k}\times \prod\limits_{j<k}\left|\lambda_j-\lambda_k\right|,
 \]
 where 
 \[
 V\l(x\r):=\frac x2; \quad x>0
 \]
 and 
 \[
 Z_V^n:=\int_{0}^{\infty} \dots \int_{0}^{\infty}\prod\limits_{j<k}\left|\lambda_j-\lambda_k\right|\prod_{k=1}^n e^{-nV(\lambda_k)}d\lambda_1 \dots d\lambda_n.
 \]
We calculate using the Selberg's integral formula that
 \[
 \lim_{n \rightarrow \infty } \frac 1{n} \log \frac{Z^{n-1}_{nV/\l(n-1\r)}}{Z^{n}_{V}}=1.
 \]
 By \cite[Theorem 2.6.6]{AGZ09} this implies that the large deviation rate function exists for $\lambda_{\max}\l(\frac {1}{n} L_{n,n+1, 1} \r )$ and it is given by
 \[
J\l(4+\vep\r)=-\int_0^4\log|4+\vep-y|d\sigma\l(y\r)+V\l(4+\vep\r)-1,
 \]
 for $\vep>0.$ Above $V\l(x\r)=\frac{x}{2}$ and $\sigma$ is the Marchenko-Pastor law given by
 \[
 d\sigma=\frac{1}{2\pi x}\sqrt{x\l(4-x\r)} \mathbbm{1}_{[0,4]}dx.
 \]
 Also, note that 
 \begin{equation}
 \label{eq: ldp limit}
 \lim_{\vep \rightarrow 0} \frac{J(4+\vep)}{\vep^{3/2}}=\frac 16.
 \end{equation}
From Claim \ref{eq: claim product} we get
 \begin{align*}
     m \log \l( \P \l(T_n^{\ptoline{}} \geq 4n+ 2^{4/3}t n^{1/3} \r) \r) \leq \log \P\l(T_{mn}^{\ptoline{}} \geq 4mn + 2^{4/3}tmn^{1/3}\r) &\\ \implies \frac{1}{n}\log \l( \P \l(T_n^{\ptoline{}} \geq 4n+ 2^{4/3}t n^{1/3} \r) \r) \leq \frac{1}{mn}\log \P\l(T_{mn}^{\ptoline{}} \geq 4mn + 2^{4/3}tmn^{1/3}\r).
 \end{align*}
 Now, as we have 
\[
\frac 12 \lambda_{\max} \l(L_{2n-1,2n, 1} \r) \overset{d}{=} T_{n-1}^{\ptoline{}}+\text{Exp}(1),
\]
 taking $m \rightarrow \infty$ and applying \eqref{eq: ldp limit} we get
 for any $\vep>0$, $n$ sufficiently large and $t \leq \delta n^{2/3}$ for some small enough $\delta$
 \[
  \log \l( \P \l(T_n^{\ptoline{}} \geq 4n+ 2^{4/3}t n^{1/3} \r) \r) \leq -\l(1-\vep\r)\frac 43{t^{3/2}}.
 \]
 This proves the lemma.
 \end{proof}
 \begin{proof}[\textbf{Proof of Theorem \ref{t: rtub beta geq 1}, (ii)}] We obtain Theorem \ref{t: rtub beta geq 1}, (ii) for $\lambda_{\max} \l(L_{2n-1,2n,1} \r)$ from above lemma. We use the similar stochastic domination argument and Lemma \ref{l: large deviation proof for beta=1}. Using the stochastic domination argument of \cite[Theorem 1]{JB23} to compare $\lambda_{\max} \l(L_{2n-1,2n,1} \r)$ and $\lambda_{\max} \l(L_{2n-2,2n-1,1} \r)$, one can obtain Theorem \ref{t: rtub beta geq 1}, (ii) for $\lambda_{\max} \l(L_{2n-2,2n-1,1} \r)$. This proves Theorem \ref{t: rtub beta geq 1}, (ii).\end{proof}
\begin{remark}\label{remark: Laguerre n+c}
    Comparing $\chi_{\beta(n-i)}$ with $\chi_{\beta(n+c-i)}$ and using Weyl's inequality it follows immediately that above bounds hold for $(m,n)$ of the form $(n+c,n)$ for any $c\geq 0$.
\end{remark}

\section{Lower bounds for right tails} 
\label{s: rtlb}
For the remaining proofs of the tail bounds we use the below proposition which allows us to study modifications of $H_{n,\beta}$ and $B_{n,m,\beta}$. Let $\wih{H}_{n,\beta,p}$ be the matrix where $\chi_{\beta\l(n-i\r)}$ in $H_{n,\beta}$ are replaced by $\sqrt{\beta\l(n-i\r)}+\frac{\zeta_i}{\sqrt{2}}$ for all $i\leq p$, where $\zeta_i$ are i.i.d.\ $N\l(0,1\r)$ random variables.  Let $\wih{B}_{n,m,\beta,p}$ be the matrix where $D_{i}$s and $C_i$s in $B_{n,m,\beta}$ are replaced by $\sqrt{\beta\l(m+1-i\r)}+\frac{\zeta_i}{\sqrt{2}}$ and $\sqrt{\beta\l(n-i\r)}+\frac{\zeta'_i}{\sqrt{2}}$ respectively for all $i\leq p$. Here $\zeta_i$s and $\zeta'_i$s are independent families of i.i.d.\ $N(0,1)$ random variables. The entries of $\wih{H}_{n,\beta,p}$ and $\wih{B}_{n,m,\beta,p}$ are independent up to symmetry. We define $\wih{L}_{n,m,\beta,p}=\wih{B}_{n,m,\beta,p}^t\wih{B}_{n,m,\beta,p}$. 

\begin{proposition}\label{eq: estimate of off diagonals of the difference}
    $\left\lVert \wih{H}_{n,\beta,p}-H_{n, \beta}\right\rVert_2\leq c_{\beta} \frac{\log n+2x}{ \sqrt{n}}$ with probability at least $1-C n\l(3e^{-x}+2e^{-\frac{\beta\l(n-p\r)}{2}}\r)$ (for some constants $C,c_{\beta}>0$ and $n$ sufficiently large). We also have for $\frac mn$ uniformly bounded by some constant, $\l\lVert\wih{L}_{n,m,\beta,p}-L_{n,m,\beta}\r\rVert_2\leq c_{\beta} \l({\log n+2x}\r)$ with probability at least $1-C n\l(18e^{-x}+8e^{-\frac{\beta\l(n-p\r)}{2}}\r)$ (for some constants $C,c_{\beta}>0$ and $n$ sufficiently large).
\end{proposition}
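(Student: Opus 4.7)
The plan is to couple the $\chi$ random variables that appear in the tridiagonal models with the Gaussians used in the modified matrices via the KMT embedding, then use the tridiagonal structure together with Gershgorin's circle theorem to pass from entrywise bounds to operator norm bounds.

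First, by the KMT strong approximation \cite{KMT75}, for each $i \leq p$ we can couple $\chi^2_{\beta(n-i)}$ with $(n-i)\beta + \sqrt{2\beta(n-i)}\zeta_i$, where $\zeta_i \sim N(0,1)$, so that with probability at least $1-e^{-x}$,
\[
\bigl|\chi^2_{\beta(n-i)}-\bigl((n-i)\beta+\sqrt{2\beta(n-i)}\zeta_i\bigr)\bigr|\leq C(\log n+x).
\]
Writing $(\sqrt{(n-i)\beta}+\zeta_i/\sqrt{2})^2 = (n-i)\beta+\sqrt{2\beta(n-i)}\zeta_i + \zeta_i^2/2$, factoring the difference of squares, and controlling $|\zeta_i|\leq \sqrt{2x}$ (a $2e^{-x}$ event) and $|\zeta_i|\leq \sqrt{\beta(n-i)}$ (a $2e^{-\beta(n-p)/2}$ event for $i\leq p$), one obtains
\[
\bigl|\chi_{\beta(n-i)}-\bigl(\sqrt{(n-i)\beta}+\zeta_i/\sqrt{2}\bigr)\bigr|\leq c_\beta\frac{\log n+2x}{\sqrt{n}}
\]
with probability at least $1-3e^{-x}-2e^{-\beta(n-p)/2}$. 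This takes care of the entrywise comparison in both the tridiagonal models.

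For the Hermite case, $\wih{H}_{n,\beta,p}-H_{n,\beta}$ is a symmetric tridiagonal matrix whose only nonzero entries are the off-diagonal entries in positions $1,\dots,p$, each of which is exactly of the form $\chi_{\beta(n-i)}-(\sqrt{(n-i)\beta}+\zeta_i/\sqrt{2})$ up to the $1/\sqrt\beta$ normalization. A union bound over $i\leq p\leq n$ gives the desired entrywise bound everywhere with probability at least $1-Cn(3e^{-x}+2e^{-\beta(n-p)/2})$. Gershgorin's circle theorem (Theorem 6.1.1 of \cite{HJ}) applied to the tridiagonal difference bounds its operator norm by twice the maximal off-diagonal entry, yielding the first assertion.

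For the Laguerre case, expand $L_{n,m,\beta}-\wih{L}_{n,m,\beta,p}=B^tB-\wih{B}^t\wih{B}$: the diagonal entries are of the form $\tfrac{1}{\beta}\bigl(\chi^2_{\beta(m-i+1)}+\chi^2_{\beta(n-i)}-(\sqrt{(m+1-i)\beta}+\zeta_i/\sqrt 2)^2-(\sqrt{(n-i)\beta}+\zeta'_i/\sqrt 2)^2\bigr)$, directly bounded by $c_\beta(\log n+2x)$ using the squared KMT inequality. The off-diagonal entries are differences of products $\chi_{\beta(n-i)}\chi_{\beta(m-i)}$ and the corresponding Gaussian-shifted products; I would write this as a telescoping sum
\[
\chi_a\chi_b-\bigl(\sqrt{a}+\tfrac{\zeta'_i}{\sqrt 2}\bigr)\bigl(\sqrt{b}+\tfrac{\zeta_{i+1}}{\sqrt 2}\bigr)=\bigl(\chi_a-(\sqrt a+\tfrac{\zeta'_i}{\sqrt 2})\bigr)\chi_b+\bigl(\sqrt a+\tfrac{\zeta'_i}{\sqrt 2}\bigr)\bigl(\chi_b-(\sqrt b+\tfrac{\zeta_{i+1}}{\sqrt 2})\bigr)
\]
and apply the entrywise KMT bound to each factor; the hypothesis $m/n=O(1)$ ensures $\sqrt{m},\sqrt{n}$ are comparable so that cross terms $\sqrt{n}\cdot(\log n)/\sqrt{m}$ stay $O(\log n)$. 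Union-bounding over the $O(n)$ affected entries, and invoking Gershgorin once more on the resulting tridiagonal difference, yields the operator norm bound $\|L_{n,m,\beta}-\wih{L}_{n,m,\beta,p}\|_2\leq c_\beta(\log n+2x)$ with the claimed probability.

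The main nuisance, rather than a deep obstacle, is the bookkeeping in the Laguerre off-diagonal: one must convert the multiplicative error in $\chi_a\chi_b$ into an additive $O(\log n)$ control, which requires both the $1/\sqrt n$ gain in the single-entry KMT bound and the comparability of $\sqrt m$ and $\sqrt n$ to cancel the extra $\sqrt n$ factor appearing from $\chi_b$.
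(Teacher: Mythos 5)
Your proposal follows the same route as the paper's proof: a KMT coupling of the $\chi^2$ variables with shifted Gaussians, entrywise control of the difference matrix using the tail events $\{|\zeta_i|\geq\sqrt{2x}\}$ and $\{|\zeta_i|\geq\sqrt{\beta(n-i)}\}$, and Gershgorin's circle theorem applied to the resulting tridiagonal difference. The telescoping decomposition you write for the Laguerre off-diagonal entries keeps $\chi_b$ and $u$ as the retained factors whereas the paper keeps $\chi_a$ and $v$, but this is a cosmetic choice and everything else matches.
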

\begin{proof}
For the rest of the proof, the constants $C,c_{\beta}$ might vary with each line.
    Using K.M.T. theorem \cite{KMT75}, one can couple $\chi^2_{\beta\l(n-i\r)}$ and $\l(n-i\r)\beta+\sqrt{2\beta\l(n-i\r)}\zeta_i$, where $\zeta_i\sim N\l(0,1\r)$ such that (this is done such that  entries of $\wih{H}_{n,\beta,p}$ are independent up to symmetry),
\begin{align}\label{eq: KMT inequality}
\l|\chi^2_{\beta\l(n-i\r)}-\l(\l(n-i\r)\beta+\sqrt{2\beta\l(n-i\r)}\zeta_i\r)\r|\leq C\l(\log n +x\r)\mbox{ with probability at least  }1-e^{-x},\nonumber\\
    \l|\chi^2_{\beta\l(n-i\r)}-\l(\sqrt{\l(n-i\r)\beta}+\l(\zeta_i/\sqrt{2}\r)\r)^2\r|\leq C\l(\log n +x\r)+\l(\zeta_i^2/2\r)  \mbox{ with probability at least  }1-e^{-x}.
\end{align}
Hence,  
\[\l|\chi_{\beta\l(n-i\r)}-\l(\sqrt{\l(n-i\r)\beta}+\zeta_i/\sqrt{2}\r)\r|\leq \frac{C\l(\log n +x\r)+\l(\zeta_i^2/2\r)}{\l|\chi_{\beta\l(n-i\r)}+\sqrt{\l(n-i\r)\beta}+\frac{\zeta_i}{\sqrt{2}}\r|}  \mbox{ with probability at least  }1-e^{-x}.
\]
 Note that $\P\l(|\zeta_i| \geq \sqrt{2x}\r) \leq 2e^{-x}$ and $\P\l(|\zeta_i| \geq \sqrt{\beta\l(n-i\r)}\r) \leq 2e^{-\frac{\beta\l(n-p\r)}{2}}$.
Hence, with probability at least $1-3e^{-x}-2e^{-\frac{\beta\l(n-p\r)}{2}}$, we have 
\begin{align}\label{eq: KMT inequality 2}
\left|\chi_{\beta\l(n-i\r)}-\l(\sqrt{\l(n-i\r)\beta}+\zeta_i/\sqrt{2}\r)\right| \leq  \frac{C\l(\log n +x\r)+x}{\l(1-\frac{1}{\sqrt{2}}\r) \sqrt{\l(n-i\r)\beta}} \leq c_{\beta} \frac{\log n+2x}{ \sqrt{n}},
\end{align}
for some $c_{\beta}>0$ and $n$ sufficiently large. By
Gershgorin circle theorem (Theorem $6.1.1$ of \cite{HJ}) it follows that with probability at least $1-C n\l(3e^{-x}+2e^{-\frac{\beta\l(n-p\r)}{2}}\r)$ (for some constant $C>0$ and $n$ sufficiently large), we have that $\left\lVert \wih{H}_{n,\beta,p}-H_{n, \beta}\right\rVert_2\leq c_{\beta} \frac{\log n+2x}{ \sqrt{n}}$.

The diagonal elements of ${L}_{n,m,\beta}-\wih{L}_{n,m,\beta,p}$ are of the form 
\begin{align*}
    \frac{1}{\beta}\l(\chi^2_{\beta\l(m-i+1\r)}-\l(\sqrt{\l(m+1-i\r)\beta}+\l(\zeta_i/\sqrt{2}\r)\r)^2+\chi^2_{\beta\l(n-i\r)}-\l(\sqrt{\l(n-i\r)\beta}+\l(\zeta'_i/\sqrt{2}\r)\r)^2\r)
\end{align*}

and the off-diagonal elements are of the form  
\begin{align*}
    \frac{1}{\beta}\l(\chi_{\beta\l(n-i\r)}\chi_{\beta\l(m-i\r)}-\l(\sqrt{\l(n-i\r)\beta}+\zeta'_i/\sqrt{2}\r)\l(\sqrt{\l(m-i\r)\beta}+\zeta_{i+1}/\sqrt{2}\r)\r).
\end{align*}

By \eqref{eq: KMT inequality}, all the diagonal elements of $L_{n,m,\beta}-\wih{L}_{n,m,\beta,p}$ are less than $c_{\beta}\l(\log m+2x\r)$ with probability at least $1-6ne^{-x}$.
By \eqref{eq: KMT inequality 2}, each of the off-diagonal element is less than 
\begin{align*}
    \l(c_{\beta} \frac{\log m+2x}{ \sqrt{m}}+\l(1+\frac{1}{\sqrt{2}}\r)\sqrt{\beta\l(m-i\r)}\r)\l(c_{\beta} \frac{\log n+2x}{ \sqrt{n}}\r)\\
    +\l(\l(1+\frac{1}{\sqrt{2}}\r)\sqrt{\beta\l(n-i\r)}\r)\l(c_{\beta} \frac{\log m+2x}{ \sqrt{m}}\r)
\end{align*}
with probability at least $1-6e^{-x}-4e^{-\frac{\beta\l(n-p\r)}{2}}$. By
Gershgorin circle theorem (Theorem $6.1.1$ of \cite{HJ}) it follows that with probability at least $1-C n\l(18e^{-x}+9e^{-\frac{\beta\l(n-p\r)}{2}}\r)$ (for some constant $C>0$ and $n$ sufficiently large), we have that $\left\lVert L_{n,m,\beta}-\wih{L}_{n,m,\beta,p}\right\rVert_2\leq c_{\beta} \l({\log n+2x}\r)$.
\end{proof}

Now we prove Theorem \ref{t: rtlb}.



We use the result that if $|f'|$ is Riemann integrable and $f'$ is decreasing then
\begin{align}\label{eq: Riemmann sum convergence}
    \lim_{n\to\infty}n\left|\,\sum_{k=0}^\infty f\l(\frac kn\r)\frac1n-\int_0^\infty f\l(x\r)\,\mathrm{d}x\,\right|
\le\frac12\int_0^\infty|f'\l(x\r)|\,\mathrm{d}x.
\end{align}



\subsection{Hermite Ensemble}
 For any $\beta>0$ and small $\varepsilon>0$, let $t$ and $ \gamma(\vep)$ be such that $t_{\varepsilon,\beta}\leq t\leq \gamma(\vep)n^{2/3}$ {($t_{\varepsilon,\beta}, \gamma(\vep)$ to be made precise later)}. For this subsection, we consider the matrix $\wih{H}_{n,\beta,p}$ where $p:= \left\lceil \l(2\sqrt{t}+2\sech\l(\sqrt{t}\r)\r)\frac{n^{1/3}}{\sqrt{t}}\right\rceil$ (recall the definition of $\wih{H}_{n,\beta,p}$ from beginning of this section). We first obtain lower bounds for the right tails of $\lambda_{\text{max}}\l(\wih{H}_{n,\beta,p}\r)$. We choose a non-negative vector $v$ {(this vector depends on $n,t$ and $\varepsilon$)} such that $v\l(i\r)=0$ for all $i> p$ and $v\l(0\r)=0$ to show that for all  $t_{\varepsilon,\beta}\leq t\leq \gamma(\vep)n^{2/3}$, we have
\begin{align*}
    \< \wih{H}_{n,\beta,p}v,v\>\geq\l(2\sqrt{n}+tn^{-1/6}\r) \<v,v\>
\end{align*}
with probability at least $\exp\l(-\l(\frac{2\beta}{3}+\varepsilon\r)t^{3/2}\r)$. 
Note that $\< \wih{H}_{n,\beta,p}v,v\>$ is a Gaussian random variable $\mu+\sigma Z$, where $Z\sim N\l(0,1\r)$. Using the fact $\sqrt{n-k}\geq \sqrt{n}-\frac{k+1}{2\sqrt{n}}$ for $k\leq 2\sqrt{n}-1$ (choose $n_{\varepsilon}$ such that $p\leq 2\sqrt{n}-1$ for all $n\geq n_{\vep}$), we have that the mean $\mu$ of $\< \wih{H}_{n,\beta,p}v,v\>$ is given by ,
\begin{align}\label{eq: mean lower bound for right tail lower bound}
    \mu&= 2\sum\limits_{k=1}^{p-1}\sqrt{n-k}\ v\l(k\r)v\l(k+1\r)\nonumber\\
    &\geq \sum\limits_{k=1}^{p-1}\l(\sqrt{n}-\frac{k+1}{2\sqrt{n}}\r)\l(v^2\l(k\r)+v^2\l(k+1\r)-\l(v\l(k+1\r)-v\l(k\r)\r)^2\r)\nonumber\\
    &\geq 2\sqrt{n}\<v,v\>-\sqrt{n}\sum\limits_{k=0}^{p}\l(v\l(k+1\r)-v\l(k\r)\r)^2-\frac{1}{\sqrt{n}}\sum\limits_{k=1}^{p} \l(k+1\r) v^2\l(k\r) .
\end{align}

Variance of $\< \wih{H}_{n,\beta,p}v,v\>$, denoted as $\sigma^2$ is given by,
\begin{align*}
    \sigma^2&=
    \frac{4}{\beta}\sum\limits_{k=1}^{p}v^4\l(k\r)-\frac{1}{\beta}\sum\limits_{k=0}^{p}\l(v^2\l(k+1\r)-v^2\l(k\r)\r)^2.
\end{align*}

{Note that $\mu\leq \l(2\sqrt{n}+tn^{-1/6}  \r)\<v,v\>$.} We have,
\begin{align*}
    \P\l(\mu+\sigma Z\geq\l(2\sqrt{n}+tn^{-1/6}\r)\<v,v\>\r)\geq \P\l(\rho+\sigma Z\geq\l(2\sqrt{n}+tn^{-1/6}\r)\<v,v\>\r) 
\end{align*} 
where $\rho<\mu$. Using the notation $\Delta\l(v\l(k\r)\r)=v\l(k+1\r)-v\l(k\r)$, we consider the event (taking lower bound in \eqref{eq: mean lower bound for right tail lower bound} as $\rho$),
\begin{equation}
\label{eq: Gaussian for right tail lower bound}
    2\sqrt{n}\<v,v\>-\sqrt{n}\sum\limits_{k=0}^{p}\l(\Delta\l(v\l(k\r)\r)\r)^2-\frac{1}{\sqrt{n}}\sum\limits_{k=1}^{p} \l(k+1\r) v^2\l(k\r) +\sigma Z\geq \l(2\sqrt{n}+tn^{-1/6}\r)\<v,v\>.
\end{equation}

 Let $f_{t}\l(x\r)=\sech\l(x-\sqrt{t}-\sech\l(\sqrt{t}\r)\r)$ for $\sech\l(\sqrt{t}\r)\leq x\leq 2\sqrt{t}+\sech\l(\sqrt{t}\r)$.
For $0 \leq x\leq \sech\l(\sqrt{t}\r)$ and $2\sqrt{t}+\sech\l(\sqrt{t}\r)\leq x\leq 2\sqrt{t}+2\sech\l(\sqrt{t}\r)$, we use linear interpolation (of slope $1$) to get a continuous function with compact support. Denote $\tau_1=\l\lfloor\frac{\sech\l(\sqrt{t}\r)n^{1/3}}{\sqrt{t}}\r\rfloor$ and $\tau_2=\left\lceil\frac{\l(2\sqrt{t}+\sech\l(\sqrt{t}\r)\r)n^{1/3}}{\sqrt{t}}\right\rceil$ and $g(t)=2\sqrt{t}+2\sech\l(\sqrt{t}\r)$. Following \cite{RRV}, we define $v\l(k\r)=f_{t}\l(\frac{k\sqrt{t}}{n^{1/3}}\r)$ for $k\leq p$, and $v\l(k\r)=0$ otherwise.
By the definition of $v$ we have,
\begin{align}\label{eq: l2 norm of vector }
\sum\limits_{k=1}^{n}v^2\l(k\r)&=
\sum\limits_{k=1}^{\tau_1}\frac{k^2t}{n^{2/3}} +\sum\limits_{k=\tau_2}^{p}\l(g(t)-\frac{k\sqrt{t}}{n^{1/3}}\r)^2 +\sum\limits_{k\geq \tau_1+1}^{\tau_2-1}\sech^2\l(\frac{k\sqrt{t}}{n^{1/3}}-\sqrt{t}-\sech\l(\sqrt{t}\r)\r).
 \end{align}

As $\sech(x)$ decays exponentially, for any $\varepsilon>0$, one can choose $n_{\varepsilon}, t_{\varepsilon},\gamma(\vep)$ such that for all $n\geq n_{\varepsilon}$ and $t_{\varepsilon}\leq t\leq \gamma(\vep)n^{\frac{2}{3}}$, we have that $\sum\limits_{k=1}^{\tau_1}\frac{k^2t}{n^{2/3}} +\sum\limits_{k=\tau_2}^{p}\l(g(t)-\frac{k\sqrt{t}}{n^{1/3}}\r)^2\leq \varepsilon \frac{n^{1/3}}{\sqrt{t}}$.
For any $\varepsilon>0$, one can choose $M_{\varepsilon}>0$ such that 
\begin{align}\label{eq: sech tail integrals}
    \int_{-\infty}^{-M_{\varepsilon}}\sech^2(x)dx,\ \int_{M_{\varepsilon}}^{\infty}\sech^2(x)dx\leq \varepsilon.
\end{align}

 This allows us to write, for large enough $t$ (in particular, $\sqrt{t}>M_\vep$) and $n$, we have 
 \begin{align*}
     \sum\limits_{k\geq 0}^{\infty}\sech^2\l(\frac{-k\sqrt{t}}{n^{1/3}}-M_{\varepsilon}\r)+\sech^2\l(\frac{k\sqrt{t}}{n^{1/3}}+M_{\varepsilon}\r)\leq \varepsilon \frac{n^{1/3}}{\sqrt{t}}.
 \end{align*} 
 Here we have used \eqref{eq: Riemmann sum convergence} and \eqref{eq: sech tail integrals}. Hence we also have
 \begin{align*}
 \sum\limits_{k\geq 0}^{\infty}\sech^2\l(\frac{-k\sqrt{t}}{n^{1/3}}-\sqrt{t}\r)+\sech^2\l(\frac{k\sqrt{t}}{n^{1/3}}+\sqrt{t}\r)\leq \varepsilon \frac{n^{1/3}}{\sqrt{t}}.    
 \end{align*}
  This implies that, for large enough $n$ and $t$, 
 \begin{align}\label{eq: l2 norm calc1}
     \l|\int_{-\infty}^{\infty}\sech^2\l(x\r)dx-\frac{\sqrt{t}}{n^{1/3}}\sum\limits_{k\geq \tau_1+1}^{\tau_2-1}\sech^2\l(\frac{k\sqrt{t}}{n^{1/3}}-\sqrt{t}-\sech\l(\sqrt{t}\r)\r)\r|\leq \varepsilon.
 \end{align}
 By repeating the exact same argument for $\sum\limits_{k=1}^{n}v^4\l(k\r)$, we have
 \begin{align}\label{eq: right tail L2,L4 norm}
\l|\sum\limits_{k=1}^{n}v^2\l(k\r)-2\frac{n^{1/3}}{\sqrt{t}}\r|\leq \frac{n^{1/3}}{\sqrt{t}}\varepsilon\quad \quad \l|\sum\limits_{k=1}^{n}v^4\l(k\r)-\frac{n^{1/3}}{\sqrt{t}}\l(\frac{4}{3}\r)\r|\leq \frac{n^{1/3}}{\sqrt{t}}\varepsilon .
 \end{align}

Choosing $\sqrt{t}>M_{\varepsilon}$ and  by choosing $t,n$ large enough, by similar argument it can be checked that
\begin{align*}
    \l|\sum\limits_{k=\tau_1+1}^{\tau_2-2}\l(\Delta\l(v\l(k\r)\r)\r)^2-\frac{\sqrt{t}}{n^{1/3}}\int_{-\infty}^{\infty}\l(\sech'\l(x\r)\r)^2dx\r|\leq \varepsilon\frac{\sqrt{t}}{n^{1/3}}.
\end{align*}
Hence we have

    \begin{equation}
    \label{eq: right tail derivative norm}
\l|\sum\limits_{k=0}^{p}\l(\Delta\l(v\l(k\r)\r)\r)^2-\frac{\sqrt{t}}{n^{1/3}}\l(\frac{2}{3}\r)\r|\leq\frac{\sqrt{t}}{n^{1/3}}\varepsilon.
    \end{equation}

Next consider,
\begin{align*}
\sum\limits_{k=1}^{p}kv^2\l(k\r)=\sum\limits_{k=1}^{\tau_1}\frac{k^3t}{n^{2/3}} +\sum\limits_{k=\tau_2}^{p}\l(g(t)-\frac{k\sqrt{t}}{n^{1/3}}\r)^2k +\sum\limits_{k\geq \tau_1+1}^{\tau_2-1}k\sech^2\l(\frac{k\sqrt{t}}{n^{1/3}}-\sqrt{t}-\sech\l(\sqrt{t}\r)\r).
\end{align*}


Writing $a(k,t,n)=\l(\frac{k\sqrt{t}}{n^{1/3}}-\sqrt{t}-\sech\l(\sqrt{t}\r)\r)$, we have
\begin{align}
    \sum\limits_{k\geq \tau_1+1}^{\tau_2-1}k\sech^2\l(\frac{k\sqrt{t}}{n^{1/3}}-\sqrt{t}-\sech\l(\sqrt{t}\r)\r)&=\frac{n^{1/3}}{\sqrt{t}}\sum\limits_{k\geq \tau_1+1}^{\tau_2-1}\frac{k\sqrt{t}}{n^{1/3}}\sech^2\l(\frac{k\sqrt{t}}{n^{1/3}}-\sqrt{t}-\sech\l(\sqrt{t}\r)\r)\nonumber\\
    &=\frac{n^{1/3}}{\sqrt{t}}\sum\limits_{k\geq \tau_1+1}^{\tau_2-1}a(k,t,n)\sech^2\l(a(k,t,n)\r)\nonumber\\
    &+\l(\sqrt{t}+\sech\l(\sqrt{t}\r)\r)\frac{n^{1/3}}{\sqrt{t}}\sum\limits_{k\geq \tau_1+1}^{\tau_2-1}\sech^2\l(a(k,t,n)\r)\nonumber.
\end{align}
{
    It can be checked that
    \begin{align}\label{eq: kv(k)^2 final bound}
        \l|\sum\limits_{k=1}^{p}kv^2\l(k\r)-\frac{n^{2/3}}{\sqrt{t}}2\r|&\leq \l|\sum\limits_{k\geq \tau_1+1}^{\tau_2-1}k\sech^2\l(\frac{k\sqrt{t}}{n^{1/3}}-\sqrt{t}-\sech\l(\sqrt{t}\r)\r)-\frac{n^{2/3}}{\sqrt{t}}2\r|+\frac{n^{2/3}}{\sqrt{t}}2\varepsilon\nonumber\\
        &\leq \frac{n^{2/3}}{\sqrt{t}}3\varepsilon
    \end{align}
}

and also
    \begin{equation}
    \label{eq: right tail derivative square norm}
\l|\sum\limits_{k=0}^{p}\l(\Delta\l(v^2\l(k\r)\r)\r)^2-\frac{\sqrt{t}}{n^{1/3}}\int_{-\infty}^{\infty}\l(2ff'\l(x\r)\r)^2dx\r|\leq\frac{\sqrt{t}}{n^{1/3}}\varepsilon.
    \end{equation}
    


Note that from \eqref{eq: Gaussian for right tail lower bound} we need a lower bound for the probability of the event 
\begin{equation}
\label{eq: Gaussian event for right tail lower bound}
    Z \geq \frac{\sqrt{n}\sum\limits_{k=0}^{p}\l(\Delta\l(v\l(k\r)\r)\r)^2+\frac{1}{\sqrt{n}}\sum\limits_{k=1}^{p} \l(k+1\r) v^2\l(k\r)+tn^{-1/6}\<v,v\>}{\sqrt{\frac{4}{\beta}\sum\limits_{k=1}^{p}v^4\l(k\r)-\frac{1}{\beta}\sum\limits_{k=0}^{p}\l(v^2\l(k+1\r)-v^2\l(k\r)\r)^2}}.
\end{equation}

{\begin{remark}
     For given $\varepsilon$, the step size should be sufficiently small to approximate integral by Riemann sums. Choose $\gamma(\vep)$ such that for all step sizes smaller than $\gamma(\vep)$, the difference in \eqref{eq: Riemmann sum convergence} is less than $\varepsilon$. Choose $t$ large enough such that difference in sums and integrals in \eqref{eq: l2 norm calc1} and others hold. Thus for the range of $t_{\varepsilon}\leq t\leq \gamma(\vep)n^{2/3}$, the step size $\frac{\sqrt{t}}{n^{1/3}}$ is sufficiently small to approximate integrals by Riemann sums up to error $\varepsilon$. 
\end{remark}
}
Using the bounds from \eqref{eq: right tail L2,L4 norm},\eqref{eq: right tail derivative norm} and \eqref{eq: kv(k)^2 final bound} we consider,
\begin{align*}
    Z\geq \frac{\sqrt{n}\frac{\sqrt{t}}{n^{1/3}}\l(\frac{2}{3}+\varepsilon\r)+\frac{1}{\sqrt{n}}\l(\frac{n^{2/3}}{\sqrt{t}}\l(2+\varepsilon\r)+\frac{n^{1/3}}{\sqrt{t}}\l(2+\varepsilon\r)\r)+\frac{t}{n^{1/6}}\frac{n^{1/3}}{\sqrt{t}}\l(2+\varepsilon\r)}{\sqrt{\frac{4n^{1/3}}{\beta\sqrt{t}}\l(\frac{4}{3}-\varepsilon\r)-\frac{1\sqrt{t}}{\beta n^{1/3}}\l(c+\varepsilon\r)}},
\end{align*}
where $c= \int_{-\infty}^{\infty}\l(2ff'\l(x\r)\r)^2dx$. Note that we have taken slightly larger value for numerator and lower value for denominator as we are interested in lower bound. Hence we get for any $\vep>0,$ there exists $n_{\varepsilon},t_{\varepsilon}$ sufficiently large such that for all $n\geq n_{\varepsilon}$ and $t\geq t_{\varepsilon}$, the probability of the event that we want is lower bounded by the probability of following event
\[
Z \geq \frac{2\sqrt{\frac \beta3}t^{3/4}\l(1+\vep\r)}{\l(1-\vep\r)},
\]
where $Z$ is a $N\l(0,1\r)$ variable. Hence for any $\vep>0$, there exist $n_{\varepsilon,\beta}$ and $t_{\varepsilon,\beta}$ such that for all $n\geq n_{\varepsilon,\beta}$ and $t_{\varepsilon,\beta}\leq t\leq \gamma(\vep)n^{2/3}$, we have \begin{align*}
    \mathbb{P}\l(\lambda_{\text{max}}\l(\wih{H}_{n,\beta,p}\r)\geq 2\sqrt{n}+tn^{-1/6}\r)\geq \exp\l(- \frac{2\beta}{3}t^{3/2}(1+\varepsilon)\r).
\end{align*}

Further, note that by Weyl's inequality we have
\[
\l\{\lambda_{\max} \l(\wih{H}_{n,\beta,p} \r) \geq 2 \sqrt{n}+t n^{-1/6}\r\} \subset \l\{ \lambda_{\max} \l(H_{n, \beta}\r) \geq 2 \sqrt{n}+t n^{-1/6}-{\lambda_{\max} \l(\wih{H}_{n,\beta,p}-H_{n, \beta} \r)}\r\}.
\]
We take $x=\varepsilon t n^{1/3}$ 
and apply Proposition \ref{eq: estimate of off diagonals of the difference}, to get for sufficiently large $n$
\[
\P \l( \lambda_{\max} \l(H_{n, \beta}\r) \geq 2 \sqrt{n}+ \l(1-\vep\r) tn^{-1/6}\r)+C n\l(3e^{-x}+2e^{-\frac{\beta\l(n-p\r)}{2}}\r)\geq \P \l( \wih{H}_{n,\beta,p}  \geq 2 \sqrt{n}+t n^{-1/6} \r).
\]
By elementary calculations one can show that for any $\beta>0$ and any small $\varepsilon>0$ there exists an $n_{\varepsilon,\beta}$ and $t_{\varepsilon,\beta}$ such that for $n\geq n_{\varepsilon,\beta}$ and $t_{\varepsilon,\beta}\leq t\leq \gamma(\vep)n^{\frac{2}{3}}$, we have that
\begin{align*}
    \mathbb{P}\l(\lambda_{\text{max}}\l(H_{n,\beta}\r)\geq 2\sqrt{n}+tn^{-1/6}\r)\geq \exp\l(- \frac{2\beta}{3}t^{3/2}\l(1+\varepsilon\r)\r).
\end{align*}


\subsection{Laguerre Ensemble}\label{subsec: Laguerre lower for left}

Fix $\beta>0$ and fix small $\varepsilon>0$.
Let $m=m\l(n\r)\geq n$ be such that for all $n\geq n_{\varepsilon}$, we have $\frac{m}{n}\leq M$ for some $M<\infty$. We denote $\gamma:=m/n$ and $a=\l(1+\sqrt{\gamma}\r)^2$ and $b=\gamma^{-1/6}\l(1+\sqrt{\gamma}\r)^{4/3}.$ We consider  matrix $\wih{L}_{n,m,\beta,p}$ where $p:= \left\lceil \l(2\sqrt{t}+2\sech\l(\sqrt{t}\r)\r)\frac{n^{1/3}\gamma^{1/4}}{\sqrt{tb}}\right\rceil$(recall the definition of $\l(\wih{L}_{n,m,\beta,p}\r)$ from Proposition \ref{eq: estimate of off diagonals of the difference}). We obtain lower bound for probability of the event that $\lambda_{\text{max}}\l(\wih{L}_{n,m,\beta,p}\r)\geq an\l(1+\frac{tbn^{-2/3}}{a}\r)$. Here $t_{\varepsilon}\leq t\leq \rho(\vep,M)n^{2/3}$. The values $t_{\varepsilon},\rho(\vep,M)$ will be made precise later. As before for $\varepsilon>0$, we construct a non-negative vector $v$ (this vector depends on $n,m,t$ and $\varepsilon$) such that $v\l(i\r)=0$ for all $i> p$ and $v\l(0\r)=0$ to show that for all $n\geq n_{\varepsilon,\beta}$ and  $t_{\varepsilon,\beta}\leq t\leq \rho(\vep,M)n^{2/3}$, we have
\begin{align}\label{eq: Laguerre before KMT}
    \langle \wih{L}_{n,m,\beta,p}v,v\rangle\geq an\l(1+\frac{tbn^{-2/3}}{a}\r)\langle v,v\rangle
\end{align}
with probability at least $\exp\l(- \frac{2\beta}{3}t^{3/2}\l(1+\varepsilon\r)\r)$. Note that $\wih{L}_{n,m,\beta,p}$ can be written as a sum of two matrices $L'$ and $L''$, where $L''$ is a tridiagonal symmetric matrix which contains entries which are linear combinations of $\zeta_k^2,\l(\zeta'_k\r)^2, \zeta_{k+1}\zeta'_k$ in the top $p\times p$ submatrix and zeros elsewhere. $L'$ contains random variables which are Gaussian in the top $p\times p$ submatrix. We first show that for the range $t_{\varepsilon}\leq t\leq \rho(\vep,M)n^{\frac{2}{3}}$,
\begin{align*}
    \langle L'v,v\rangle\geq an\l(1+\frac{tbn^{-2/3}}{a}\r)\langle v,v\rangle
\end{align*}
with probability at least $\exp\l(- \frac{2\beta}{3}t^{3/2}\l(1+\varepsilon\r)\r)$. We take $v\l(k\r)=0$ for $k\geq p$ and use $\sqrt{n-k}\geq \sqrt{n}-\frac{k+1}{2\sqrt{n}}$ for $k\leq 2\sqrt{n}-1$. We choose $n_{\vep}$ such that $p\leq 2\sqrt{n}-1$ for all $n\geq n_{\vep}$.
Now $\langle L'v,v\rangle$ is a Gaussian random variable with mean $\mu$ and variance $\sigma^2$ given by,
\begin{align*}
    \mu&= \sum\limits_{k=1}^{p} \l(\l(m+1-k\r)+\l(n-k\r)\r)v^2\l(k\r) + 2\sum\limits_{k=1}^{p-1} \l(\sqrt{\l(m-k\r)\l(n-k\r)}\r)v\l(k\r)v\l(k+1\r)\nonumber\\
    &\geq \sum\limits_{k=1}^{p} \l(m+n-2k\r)v^2\l(k\r) +\sum\limits_{k=1}^{p-1} \l(\sqrt{m}-\frac{k{+1}}{2\sqrt{m}}\r)\l(\sqrt{n}-\frac{k+{1}}{2\sqrt{n}}\r)\l(v^2\l(k\r)+v^2\l(k+1\r)-\l(\Delta\l(v\l(k\r)\r)\r)^2 \r)\nonumber\\
    &\geq \l(\sqrt{m}+\sqrt{n}\r)^2\langle v,v\rangle-\sum\limits_{k=1}^{p} \l(2k+\frac{\l({m}+{n}\r)\l(k+1\r)}{\sqrt{mn}}\r)v^2\l(k\r)-\sum\limits_{k=0}^{p}\l(\sqrt{mn}+\frac{\l(k+1\r)^2}{4\sqrt{mn}}\r)\l(\Delta\l(v\l(k\r)\r)\r)^2. 
\end{align*}
\begin{align*}
    \sigma^2&=  \frac{2}{\beta}\sum\limits_{k=0}^{p-1}\l({\sqrt{m-k}\ v^2\l(k+1\r)}+\sqrt{n-k}\ v\l(k\r)v\l(k+1\r)\r)^2\\  
    &+\frac{2}{\beta}\sum\limits_{k=1}^{p} \l({\sqrt{n-k}\ v^2\l(k\r)}+\sqrt{m-k}\ v\l(k\r)v\l(k+1\r)\r)^2. 
\end{align*}
We are interested in the event 
\begin{align*}
    \mu+\sigma Z \geq \l(\sqrt{m}+\sqrt{n}\r)^2\langle v,v\rangle + tbn^{1/3}\sum\limits_{k=1}^{p} v^2\l(k\r).
\end{align*}
{
Note that $\mu\leq \l(\sqrt{m}+\sqrt{n}\r)^2\langle v,v\rangle + tbn^{1/3}\sum\limits_{k=1}^{p} v^2\l(k\r) $.} As a result, as in the case of Hermite ensemble, we can take the lower bound of $\mu$ to bound the probability of the above event. It suffices to find a lower bound for the probability of the event,
\begin{equation}\label{eq: Laguerre event lower bound}
    \sigma Z\geq  tbn^{1/3}\sum\limits_{k=1}^{p} v^2\l(k\r) +\sum\limits_{k=1}^{p} \l(2k+\frac{\l({m}+{n}\r)\l(k+1\r)}{\sqrt{mn}}\r)v^2\l(k\r)+\sum\limits_{k=0}^{p}\l(\sqrt{mn}+\frac{\l(k+1\r)^2}{4\sqrt{mn}}\r)\l(\Delta\l(v\l(k\r)\r)\r)^2. 
\end{equation}

We also have that 
\begin{align*}
    \sigma^2
    &\geq \l(\frac{2}{\beta}\l(m+n-2p\r)\r)\l(\sum\limits_{k=1}^{p}2v^4\l(k\r)-\frac{1}{2}\sum\limits_{k=0}^{p}\left(\Delta\l(v^2\l(k\r)\r)\right)^2\r)\nonumber\\
    &\quad +\frac{2}{\beta}\sqrt{\l(m-p\r)\l(n-p\r)}\l(\sum\limits_{k=1}^{p}4v^4\l(k\r)-\sum\limits_{k=0}^{p}\l(\Delta\l(v^2\l(k\r)\r)\r)^2-\sum\limits_{k=0}^{p}\l(\Delta\l(v\l(k\r)\r)\r)^2\l(v^2\l(k\r)+v^2\l(k+1\r)\r)\r)
\end{align*}

\begin{align*}
    \sigma^2&\geq \frac{4}{\beta}\l(\sqrt{m-p}+\sqrt{n-p}\r)^2\l(\sum\limits_{k=1}^{p}v^4\l(k\r)-\frac{1}{4}\sum\limits_{k=0}^{p}\l(\Delta\l(v^2\l(k\r)\r)\r)\r)^2\nonumber\\
    &\quad\quad-\frac{2}{\beta}\sqrt{\l(m-p\r)\l(n-p\r)}\sum\limits_{k=0}^{p}\l(\Delta\l(v\l(k\r)\r)\r)^2\l(v^2\l(k\r)+v^2\l(k+1\r)\r)\nonumber.
\end{align*}

We denote $\nu$ to be the LHS of above inequality.
Let $f_{t}\l(x\r)$ be the function as in Hermite case. Denote $\tau_1=\l\lfloor\frac{\sech\l(\sqrt{t}\r)n^{1/3}\gamma^{1/4}}{\sqrt{t}\sqrt{b}}\r\rfloor$ and $g(t)=2\sqrt{t}+2\sech\l(\sqrt{t}\r)$ and $\tau_2=\left\lceil\frac{\l(2\sqrt{t}+\sech\l(\sqrt{t}\r)\r)n^{1/3}\gamma^{1/4}}{\sqrt{t}\sqrt{b}}\right\rceil$.  We define $v\l(k\r)=f_{t}\l(\frac{k\sqrt{tb}}{\gamma^{1/4}n^{1/3}}\r)$ for $k\leq p$ and $0$ otherwise. By the choice of vector $v$ and comparing sums with integrals as we did in the Hermite case, one can check that for any given $\varepsilon>0$, one can choose $n_{\varepsilon}, \rho(\vep,M)$ and $t_{\varepsilon}$ such that for all $t_{\varepsilon}\leq t \leq \rho(\vep,M)n^{\frac{2}{3}}$ and for all $n\geq n_{\varepsilon}$ with $\frac{m}{n}\leq M$ for some $M<\infty$, the following bounds hold.

\begin{align*}
\l|\sum\limits_{k=1}^{n}v^2\l(k\r)-2\frac{n^{1/3}\gamma^{1/4}}{\sqrt{t}\sqrt{b}}\r|\leq \frac{n^{1/3}\gamma^{1/4}}{\sqrt{t}\sqrt{b}}\varepsilon\quad \quad \l|\sum\limits_{k=1}^{n}v^4\l(k\r)-\frac{n^{1/3}\gamma^{1/4}}{\sqrt{t}\sqrt{b}}\l(\frac{4}{3}\r)\r|\leq \frac{n^{1/3}\gamma^{1/4}}{\sqrt{t}\sqrt{b}}\varepsilon .
 \end{align*}

 \begin{align*}
\l|\sum\limits_{k=0}^{p}\l(\Delta\l(v\l(k\r)\r)\r)^2-\frac{\sqrt{t}\sqrt{b}}{n^{1/3}\gamma^{1/4}}\l(\frac{2}{3}\r)\r|\leq\frac{\sqrt{t}\sqrt{b}}{n^{1/3}\gamma^{1/4}}\varepsilon.
    \end{align*}

 \begin{align*}
\l|\sum\limits_{k=0}^{p}\l(\Delta\l(v^2\l(k\r)\r)\r)^2-\frac{\sqrt{t}\sqrt{b}}{n^{1/3}\gamma^{1/4}}\int_{-\infty}^{\infty}\l(2ff'\l(x\r)\r)^2dx\r|\leq\frac{\sqrt{t}\sqrt{b}}{n^{1/3}\gamma^{1/4}}\varepsilon.
    \end{align*}

    \begin{align*}
\l|\sum\limits_{k=0}^{p}\l(\Delta\l(v\l(k\r)\r)\r)^2\l(v^2\l(k\r)+v^2\l(k+1\r)\r)-\frac{\sqrt{t}\sqrt{b}}{n^{1/3}\gamma^{1/4}}\int_{-\infty}^{\infty}2\l(ff'\l(x\r)\r)^2dx\r|\leq\frac{\sqrt{t}\sqrt{b}}{n^{1/3}\gamma^{1/4}}\varepsilon.
    \end{align*}

\begin{align*}
    \l|\sum\limits_{k=1}^{p}kv^2\l(k\r)-\frac{n^{2/3}\sqrt{\gamma}}{\sqrt{t}b}2\r|\leq \frac{n^{2/3}\sqrt{\gamma}}{\sqrt{t}b}\varepsilon.
\end{align*}


We use the above bounds to get lower bound for the probability of the event 
\begin{align*}
    Z&\geq \frac{tbn^{1/3}\sum\limits_{k=1}^{p} v^2\l(k\r) +\sum\limits_{k=1}^{p} \l(2k+\frac{\l({m}+{n}\r)\l(k+1\r)}{\sqrt{mn}}\r)v^2\l(k\r)+\sum\limits_{k=0}^{p}\l(\sqrt{mn}+\frac{\l(k+1\r)^2}{4\sqrt{mn}}\r)\l(\Delta\l(v\l(k\r)\r)\r)^2}{\sqrt{\nu}}.
    \end{align*}

    Denoting 
    \begin{align*}
        \nu_{\ell}:= {\frac{4}{\beta}\l(\sqrt{m-p}+\sqrt{n-p}\r)^2\l(\frac{n^{1/3}\gamma^{1/4}}{\sqrt{t}\sqrt{b}}\l(\frac{4}{3}-\varepsilon\r)-\frac{1}{4}\frac{\sqrt{t}\sqrt{b}}{n^{1/3}\gamma^{1/4}}\l(\int_{-\infty}^{\infty}6\l(ff'\l(x\r)\r)^2dx+\varepsilon\r)\r)},
    \end{align*}
    one can check that $\nu\geq \nu_{\ell}$. As a result, we have that it is enough to find lower bound for the probability of the following event.
    \begin{align*}
    Z &\geq \frac{\l(tbn^{1/3}+\frac{\l(m+n\r)}{\sqrt{mn}}\r)\l(2+\varepsilon\r)\frac{n^{1/3}\gamma^{1/4}}{\sqrt{t}\sqrt{b}}+\l(2+\frac{\l({m}+{n}\r)}{\sqrt{mn}}\r)\frac{n^{2/3}\sqrt{\gamma}}{\sqrt{t}b}\l(2+\varepsilon\r)}{\sqrt{\nu_{\ell}}}\\
    &+\frac{\l(\sqrt{mn}+\frac{\l(p+1\r)^2}{4\sqrt{mn}}\r)\frac{\sqrt{t}\sqrt{b}}{n^{1/3}\gamma^{1/4}}\l(\frac{2}{3}+\varepsilon\r)}{\sqrt{\nu_{\ell}}}.
\end{align*}

Hence, for any $\vep>0$ and for sufficiently large $t$ and $n$ our desired lower bound is the lower bound of probability of the event,
\[
Z \geq \frac{\gamma^{1/4}\sqrt{tb}n^{2/3}\frac 83\l(1+\vep\r)}{\frac{2}{\sqrt{\beta}}\l(\sqrt{m}+\sqrt{n}\r)\frac{\gamma^{1/8}n^{1/6}}{\l(tb\r)^{1/4}}\frac{2}{\sqrt{3}}\l(1-\vep\r)}.
\]
By putting the value of $b$ and $\gamma$ one gets the above event is equal to
\[
Z \geq \frac{2\sqrt{\beta}t^{3/4}}{\sqrt{3}}\frac{\l(1+\vep\r)}{\l(1-\vep\r)}.
\]
Applying lower bound for Gaussian tail we get that for any $\beta>0$ and small $\varepsilon>0$ there exist $n_{\varepsilon,\beta},t_{\varepsilon,\beta}$ such that for the range $t_{\varepsilon,\beta}\leq t\leq \rho(\vep,M)n^{\frac{2}{3}}$ and $n\geq n_{\varepsilon,\beta}$ with $m$ such that $\frac{m}{n}\leq M$ for some $M<\infty$, we have
\[\langle L'v,v\rangle\geq an\l(1+\frac{tbn^{-2/3}}{a}\r)\langle v,v\rangle\]
with probability at least $\exp\l(- \frac{2\beta}{3}t^{3/2}\l(1+\varepsilon\r)\r)$.

 Using Gershgorin circle theorem (Theorem $6.1.1$ of \cite{HJ}) and using the tail behaviour of normal random variable we get that the eigenvalues of the matrix $L''$ (introduced after \eqref{eq: Laguerre before KMT}) are located in an interval $\l(-2\varepsilon tn^{\frac13},2\vep tn^{\frac13}\r)$ with probability at least $1-6p\exp\l(-\varepsilon tn^{\frac13}/2\r)$. Let us denote this event by $\ce_1.$ By Weyl's inequality and Gershgorin's circle theorem we have 
 \begin{align*}
\lambda_{\text{max}}\l(\wih{L}_{n,m,\beta,p}\r)\geq an\l(1+\frac{tbn^{-2/3}}{a}\r)-2\vep tn^{\frac13}
 \end{align*}
with probability at least $\exp\l(- \frac{2\beta}{3}t^{3/2}\l(1+\varepsilon\r)\r)-6p\exp\l(-\vep tn^{\frac13}/2\r)$.

Further, note that by Weyl's inequality we have
\begin{align*}
&\l\{\lambda_{\max} \l(\wih{L}_{n,m,\beta,p} \r) \geq an+{tbn^{1/3}}-2\vep tn^{\frac13}\r\}\\
&\subset \l\{ \lambda_{\max} \l(L_{n,m,\beta}\r) \geq an+{tbn^{1/3}}-2\vep tn^{\frac13}-{\lambda_{\max} \l(\wih{L}_{n,m,\beta,p}-L_{n,m,\beta} \r)}\r\}.
\end{align*}


By taking $x=\vep t n^{\frac{1}{3}}$ and applying Proposition \ref{eq: estimate of off diagonals of the difference} we have that 

\begin{align*}
&\mathbb{P}\l( \lambda_{\text{max}}\l(L_{n,m,\beta}\r)\geq an+t b n^{1/3}-\varepsilon t b n^{1/3}-2\vep tn^{\frac13}\r)\\
&\geq \exp\l(- \frac{2\beta}{3}t^{3/2}\l(1+\varepsilon\r)\r)-6p\exp\l(-\vep tn^{\frac13}/2\r)-6p\exp\l(-\vep tn^{\frac13}/2\r)-8pe^{-x}-4pe^{-\frac{\beta\l(n-p\r)}{2}}.
\end{align*}

Hence we have that for any $\beta>0$ and any small $\varepsilon>0$ there exists $n_{\varepsilon,\beta}, \rho(\vep,M)$ and $t_{\varepsilon,\beta}$ such that for $n\geq n_{\varepsilon,\beta}$ and $t_{\varepsilon,\beta}\leq t\leq \rho(\vep,M)n^{\frac{2}{3}}$ with $m$ such that $\frac{m}{n}\leq M$ for some $M<\infty$, we have
\begin{align}\label{eq: Laguerre right tail lower bound final form}
\mathbb{P}\l( \lambda_{\text{max}}\l(L_{n,m,\beta}\r)\geq an+t b n^{1/3}\r)
\geq \exp\l(- \frac{2\beta}{3}t^{3/2}\l(1+\varepsilon\r)\r).
\end{align}
\section{Upper bounds for left tails} 
\label{s: ltub}
\subsection{Hermite Ensemble}

Fix $\beta>0$. Fix small $\varepsilon,\delta>0$. For this subsection, we consider the matrix $\wih{H}_{n,\beta,p}$ for $p:=\l\lfloor tn^{1/3}\r\rfloor$. We take $t_{\varepsilon}\leq t\leq n^{\frac{1}{6}-\delta}$ (so that $p\leq \sqrt{n}$) and $n\geq n_{\varepsilon}$, where $t_{\varepsilon}$ and $n_{\varepsilon}$ will be made precise later. We first obtain upper bounds for the left tails of $\lambda_{\text{max}}\l(\wih{H}_{n,\beta,p}\r)$. As $\lambda_{\text{max}}\l(\wih{H}_{n,\beta,p}\r)\< v,v\>\geq\< \wih{H}_{n,\beta,p}v,v\>$, for the event $\lambda_{\text{max}}\l(\wih{H}_{n,\beta,p}\r)\leq 2\sqrt{n}-tn^{-1/6}$ to occur, it is necessary that for any $v\in\mathbb{R}^n$, the event $\< \wih{H}_{n,\beta,p}v,v\>\leq \l(2\sqrt{n}-tn^{-1/6}\r)\< v,v\>$ should occur. We fix $t$ and consider the following function. Let $g_t\l(x\r)=x\sqrt{t}\wedge \sqrt{\l(t-x\r)^+}\wedge \l(t-x\r)^+$. Let $\rho_t$ be the positive solution of the quadratic equation in $x$, which is $x^2t+x-t=0$.  We define $v\l(k\r)=g_t\l(\frac{k}{n^{1/3}}\r)$ for all $k \leq p$ and $0$ otherwise. $\<\wih{H}_{n,\beta,p}v,v\>$ is a normal random variable with mean $\mu$ given by,
\begin{align}\label{eq: Hermite left tail mean bound}
    \mu&= 2\sum_{k=1}^{p}\sqrt{n-k}\ v\l(k\r)v\l(k+1\r)\nonumber\\
    &\geq \sum_{k=1}^{p}\l(\sqrt{n}-\frac{k+1}{2\sqrt{n}}\r)\l(v^2\l(k\r)+v^2\l(k+1\r)-\l(\Delta v\l(k\r)\r)^2 \r)\nonumber\\
    & \geq 2\sqrt{n}\langle v,v \rangle-\sqrt{n}\sum_{k=0}^{p}\l(\Delta v\l(k\r)\r)^2-\frac{1}{\sqrt{n}}\sum_{k=1}^{p}\l(k+1\r)v^2\l(k\r).
\end{align}

Variance of $\langle \wih{H}_{n,\beta,p}v,v\rangle$, denoted as $\sigma^2$ is given by,
\begin{align}\label{eq: Hermite left tail variance bound}
    \sigma^2&=
    \frac{4}{\beta}\sum_{k=1}^{p}v^4\l(k\r)-\frac 1\beta\sum_{k=0}^{p}\l(\Delta \l(v^2\l(k\r)\r)\r)^2. 
\end{align}
We observe the following.

\begin{align*}
\sqrt{n}\sum_{k=0}^{p}\l(\Delta v\l(k\r)\r)^2 &=\sqrt{n}\l(\sum_{k=0}^{\l\lfloor\rho_tn^{1/3}\r\rfloor} \frac{t}{n^{2/3}}+\sum_{k=\l\lceil\rho_tn^{1/3}\r\rceil}^{\l\lfloor\l(t-1\r)n^{1/3}\r\rfloor}\l(\sqrt{t-\frac{k+1}{n^{1/3}}}-\sqrt{t-\frac{k}{n^{1/3}}}\r)^2\r)\\
&+\sqrt{n}\l(\sum_{k=\l\lceil\l(t-1\r)n^{1/3}\r\rceil}^{\l\lfloor tn^{1/3}\r\rfloor}\frac{1}{n^{2/3}}+f\l(t,n\r)\r).
\end{align*}


{Above $f(t,n)=\l(\frac{\sqrt{t}\l\lfloor\rho_tn^{1/3}\r\rfloor}{n^{1/3}}-\sqrt{t-\frac{\l\lceil\rho_tn^{1/3}\r\rceil}{n^{1/3}}}\r)^2+\l(\frac{\l\lceil\l(t-1\r)n^{1/3}\r\rceil}{n^{1/3}}-\sqrt{t-\frac{\l\lfloor\l(t-1\r)n^{1/3}\r\rfloor}{n^{1/3}}}\r)^2$. By looking at the slope around the point $\rho_t$,} it can be checked that $\sqrt{n}f(t,n)\leq\frac{t+1}{n^{2/3}}$.
So, we have that 
\begin{align}\label{eq: derivative square bound}
    \sqrt{n}\sum_{k=0}^{p}\l(\Delta v\l(k\r) \r)^2\leq Ctn^{1/6}
\end{align}for some fixed constant $C$ which is independent of $n$ and $t$. 
Next we look at 

\begin{align*}
    \frac{1}{\sqrt{n}}\sum_{k=1}^{p}kv^2\l(k\r)
&=\frac{1}{\sqrt{n}}\l(\sum_{k=1}^{{\l\lfloor\rho_tn^{1/3}\r\rfloor}}\frac{k^3t}{n^{2/3}}+\sum_{k=\l\lceil\rho_tn^{1/3}\r\rceil}^{\l\lfloor\l(t-1\r)n^{1/3}\r\rfloor}k\l(t-\frac{k}{n^{1/3}} \r)+\sum_{k=\l\lceil\l(t-1\r)n^{1/3}\r\rceil}^{\l\lfloor tn^{1/3}\r\rfloor}k\l(t-\frac{k}{n^{1/3}}\r)^2\r).
\end{align*}


As $\rho_t\rightarrow 1$ as $t\rightarrow\infty$, by elementary calculations it can be checked that for any $\vep>0$ we can choose $n$ and $t$ sufficiently large such that 
\begin{align}\label{eq: Hermite left tail k vk^2 sum}
    \frac{1}{\sqrt{n}}\sum_{k=1}^{p}kv^2\l(k\r) \leq \frac{n^{1/6}t^3}{6}\l(1+\vep\r).
\end{align}

Now, we have

\begin{align*}
\sum_{k=1}^{p}v^2\l(k\r)&=\sum_{k=1}^{{\l\lfloor\rho_tn^{1/3}\r\rfloor}}\frac{k^2t}{n^{2/3}}+\sum_{k=\l\lceil\rho_tn^{1/3}\r\rceil}^{\l\lfloor\l(t-1\r)n^{1/3}\r\rfloor}\l(t-\frac{k}{n^{1/3}} \r)+\sum_{k=\l\lceil\l(t-1\r)n^{1/3}\r\rceil}^{\l\lfloor tn^{1/3}\r\rfloor}\l(t-\frac{k}{n^{1/3}}\r)^2.
\end{align*}
One can check that for any $\vep>0$ there exists $n$ and $t$ sufficiently large such that 
\begin{align}\label{eq: Hermite left tail v^2 sum}
    \l|\frac{1}{2 \sqrt{n}}\sum_{k=1}^{p}v^2\l(k\r)-\frac{n^{-1/6}t^2}{4}\r| \leq {n^{-1/6}t^2\vep}.
\end{align}

Next we look at 
\begin{align*}
\sum_{k=1}^{p}v^4\l(k\r)&=\sum_{k=1}^{{\l\lfloor\rho_tn^{1/3}\r\rfloor}}\frac{k^4t^2}{n^{4/3}}+\sum_{k=\l\lceil\rho_tn^{1/3}\r\rceil}^{\l\lfloor\l(t-1\r)n^{1/3}\r\rfloor}\l(t-\frac{k}{n^{1/3}} \r)^2+\sum_{k=\l\lceil\l(t-1\r)n^{1/3}\r\rceil}^{\l\lfloor tn^{1/3}\r\rfloor}\l(t-\frac{k}{n^{1/3}}\r)^4.
\end{align*}


Hence, one can check that for any $\vep>0$ there exists $n$ and $t$ sufficiently large such that 

\begin{align*}
     \sum_{k=1}^{p}v^4\l(k\r) \leq \frac{n^{1/3}t^3}{3}\l(1+\vep\r).
\end{align*}

Finally we have the sum
\begin{align*}
    \sum_{k=0}^{p}\l(\Delta \l(v^2\l(k\r)\r)\r)^2 &=\sum_{k=1}^{{\l\lfloor\rho_tn^{1/3}\r\rfloor}}\frac{\l(2k+1\r)^2t^2}{n^{4/3}}+\sum_{k=\l\lceil\rho_tn^{1/3}\r\rceil}^{\l\lfloor\l(t-1\r)n^{1/3}\r\rfloor}\frac{1}{n^{2/3}}\\
    &+\sum_{k=\l\lceil\l(t-1\r)n^{1/3}\r\rceil}^{\l\lfloor tn^{1/3}\r\rfloor}\l(-\frac{2t}{n^{1/3}}+\frac{2k+1}{n^{2/3}} \r)^2+g(t,n).
\end{align*}

Above $g(t,n)=\l(\frac{t\l\lfloor\rho_tn^{1/3}\r\rfloor^2}{n^{2/3}}-\l({t-\frac{\l\lceil\rho_tn^{1/3}\r\rceil}{n^{1/3}}}\r)\r)^2+\l(\frac{\l\lceil\l(t-1\r)n^{1/3}\r\rceil^2}{n^{2/3}}-\l({t-\frac{\l\lfloor\l(t-1\r)n^{1/3}\r\rfloor}{n^{1/3}}}\r)\r)^2$. It can be checked that for large enough $t$ and $n$, we have 
\begin{align}\label{eq: sigma^2 lower bound}
    \sum_{k=0}^{p}\l(v^4\l(k\r)-\frac{1}{4}\l(\Delta\l(v^2\l(k\r)\r)\r)^2\r) \leq \frac{n^{1/3}t^3}{3}\l(1+\vep\r).
\end{align}

From the bounds \eqref{eq: derivative square bound}, \eqref{eq: Hermite left tail k vk^2 sum}, \eqref{eq: Hermite left tail v^2 sum} in the inequality \eqref{eq: Hermite left tail mean bound} we get for any $\vep>0$ there exists $n$ and $t$ sufficiently large such that $\l(2\sqrt{n}-tn^{-1/6}\r)\left<v,v\right>-\mu \leq -\frac 13 t^3n^{1/6}\l(1-\vep\r)$. Also by \eqref{eq: sigma^2 lower bound} and \eqref{eq: Hermite left tail variance bound}, we can chose $t$ and $n$ sufficiently large such that $\sigma^2 \leq \frac{4}{3 \beta}t^3n^{1/3}\l(1+\vep\r).$ Hence, we need an upper bound of the following event 
\[
Z \leq \frac{-\frac 13t^3n^{1/6}\l(1-\vep\r)}{\frac{2}{\sqrt{3\beta}}t^{3/2} n^{1/6}\l(1+\vep\r)}.
\]
Hence we have that for any $\beta>0$ and any small $\varepsilon,\delta>0$ there exists $n_{\varepsilon,\beta}$ and $t_{\varepsilon,\beta}$ such that for $n\geq n_{\varepsilon,\beta}$ and $t_{\varepsilon,\beta}\leq t\leq n^{\frac{1}{6}-\delta}$ we have
\begin{align}\label{eq: Hermite tail lower bound before KMT}
\mathbb{P}\l( \lambda_{\text{max}}\l(\wih{H}_{n,\beta,p}\r)\leq 2\sqrt{n}-tn^{-1/6}\r)
\leq \exp\l(- \l(\frac{\beta}{24}-\varepsilon\r)t^{3}\r).
\end{align}

Further, by application of Weyl's inequality we have 
\[
\l\{ \lambda_{\max}\l(H_{n,\beta}\r) \leq 2\sqrt{n}-tn^{-1/6} \r\} \subset \l\{ \lambda_{\max}\l(\wih{H}_{n,\beta,p}\r)\leq 2\sqrt{n}-tn^{-1/6}+\lambda_{\max}\l(H_{n,\beta}-\wih{H}_{n,\beta,p}\r) \r\}.
\]

By Proposition \ref{eq: estimate of off diagonals of the difference} we have 
\begin{align*}
&\P \l(\lambda_{\max}\l(H_{n,\beta}\r) \leq 2\sqrt{n}-tn^{-1/6} \r) \leq \P \l( \lambda_{\max}\l(\wih{H}_{n,\beta,p}\r)\leq 2\sqrt{n}-tn^{-1/6}+ c_{\beta} \frac{\log n+2x}{ \sqrt{n}}\r)\\
&+C n\l(3e^{-x}+2e^{-\frac{\beta\l(n-p\r)}{2}}\r).
\end{align*}
Taking $x=tn^{\frac{1}{3}-\frac{\delta}{3}}$ and using \eqref{eq: Hermite tail lower bound before KMT}, we have that for any $\beta>0$ and any small $\varepsilon,\delta>0$ there exists $n_{\varepsilon,\beta}$ and $t_{\varepsilon,\beta}$ such that for $n\geq n_{\varepsilon,\beta}$ and $t_{\varepsilon,\beta}\leq t\leq n^{\frac{1}{6}-\delta}$ we have
\begin{align}\label{eq: Hermite tail lower bound final form}
\mathbb{P}\l( \lambda_{\text{max}}\l(H_{n,\beta}\r)\leq 2\sqrt{n}-tn^{-1/6}\r)
\leq \exp\l(- \l(\frac{\beta}{24}-\varepsilon\r)t^{3}\r).
\end{align}

\subsection{Laguerre Ensemble}\label{sec: Tridiagonal for Laguerre}

Fix $\beta>0$ and fix small $\varepsilon,\delta>0$.
Let $m=m\l(n\r)\geq n$ be such that for all $n\geq n_{\varepsilon}$, we have $\frac{m}{n}\leq M$ for some $M<\infty$. We denote $\gamma:=m/n$ and $a=\l(1+\sqrt{\gamma}\r)^2$ and $b=\gamma^{-1/6}\l(1+\sqrt{\gamma}\r)^{4/3}.$ We consider  matrix $\wih{L}_{n,m,\beta,p}$ for $ p:= \l\lfloor \frac{tn^{1/3}}{d}\r\rfloor$, where $d=\frac{\sqrt{b}}{\gamma^{1/4}}$. We obtain upper bound for probability of the event that $\lambda_{\text{max}}\l(\wih{L}_{n,m,\beta,p}\r)\leq an\l(1-\frac{tbn^{-2/3}}{a}\r)$. Here $t_{\varepsilon}\leq t\leq n^{\frac{1}{6}-\delta}$ (one of the reason for this range of $t$ is so that $p\leq \sqrt{n}$). The value $t_{\varepsilon}$ will be made precise later. As before for $\varepsilon>0$, we construct a non-negative vector $v$ (this vector depends on $n,t,C,\delta$ and $\varepsilon$) such that $v\l(i\r)=0$ for all $i> p$ and $v\l(0\r)=0$ to show that for all $n\geq n_{\varepsilon,\beta}$ and  $t_{\varepsilon,\beta}\leq t\leq n^{\frac{1}{6}-\delta}$, we have
\begin{align}\label{eq: Laguerre before KMT left tail}
    \langle \wih{L}_{n,m,\beta,p}v,v\rangle\leq an\l(1-\frac{tbn^{-2/3}}{a}\r)\langle v,v\rangle
\end{align}
with probability at most $\exp\l(-\l(\frac{\beta}{24}-\varepsilon\r)t^3\r)$. Note that $\wih{L}_{n,m,\beta,p}$ can be written as a sum of two matrices $L'$ and $L''$, where $L''$ is a tridiagonal symmetric matrix which contains entries which are linear combinations of $\zeta_k^2,\l(\zeta'_k\r)^2, \zeta_{k+1}\zeta'_k$ in the top $p\times p$ submatrix and zeros elsewhere. $L'$ contains random variables which are Gaussian in the top $p\times p$ submatrix. We first show that for the range $t_{\varepsilon}\leq t\leq n^{\frac{1}{6}-\delta}$,
\begin{align*}
    \langle L'v,v\rangle\leq an\l(1-\frac{tbn^{-2/3}}{a}\r)\langle v,v\rangle
\end{align*}
with probability at most $\exp\l(-\l(\frac{\beta}{24}-\varepsilon\r)t^{3}\r)$.

We fix $t$ and consider the following function. Let $g_t\l(x\r)=x\sqrt{t}\wedge \sqrt{\l(t-x\r)^+}\wedge \l(t-x\r)^+$. Let $\rho_t$ be the positive solution of the quadratic equation in $x$, which is $x^2t+x-t=0$.  We define $v\l(k\r)=g_t\l(\frac{kd}{n^{1/3}}\r)$ for all $k \leq p$ and $0$ otherwise.

Now $\langle L'v,v\rangle$ is a Gaussian random variable with mean $\mu$ and variance $\sigma^2$ given by ($v\l(k\r)=0$ for $k\geq p$ and using the fact $\sqrt{n-k}\geq \sqrt{n}-\frac{k+1}{2\sqrt{n}}$ for $k\leq 2\sqrt{n}-1$),
\begin{align}\label{eq: Laguerre mean lower bound}
    \mu&= \sum\limits_{k=1}^{p} \l(\l(m+1-k\r)+\l(n-k\r)\r)v^2\l(k\r) + 2\sum\limits_{k=1}^{p-1} \l(\sqrt{\l(m-k\r)\l(n-k\r)}\r)v\l(k\r)v\l(k+1\r)\nonumber\\
    &\geq \sum\limits_{k=1}^{p} \l(m+n-2k\r)v^2\l(k\r) +\sum\limits_{k=1}^{p-1} \l(\sqrt{m}-\frac{k{+1}}{2\sqrt{m}}\r)\l(\sqrt{n}-\frac{k+{1}}{2\sqrt{n}}\r)\l(v^2\l(k\r)+v^2\l(k+1\r)-\l(\Delta\l(v\l(k\r)\r) \r)^2\r)\nonumber\\
    &\geq \l(\sqrt{m}+\sqrt{n}\r)^2\langle v,v\rangle-\sum\limits_{k=1}^{p} \l(2k+\frac{\l({m}+{n}\r)\l(k+1\r)}{\sqrt{mn}}\r)v^2\l(k\r)-\sum\limits_{k=0}^{p}\l(\sqrt{mn}+\frac{\l(k+1\r)^2}{4\sqrt{mn}}\r)\l(\Delta\l(v\l(k\r)\r)\r)^2.
\end{align}

\begin{align*}
    \sigma^2&=  \frac{2}{\beta}\sum\limits_{k=0}^{p-1}\l({\sqrt{m-k}\ v^2\l(k+1\r)}+\sqrt{n-k}\ v\l(k\r)v\l(k+1\r)\r)^2 \\
    &+\frac{2}{\beta}\sum\limits_{k=1}^{p} \l({\sqrt{n-k}\ v^2\l(k\r)}+\sqrt{m-k}\ v\l(k\r)v\l(k+1\r)\r)^2. 
\end{align*}

We observe the following.
\begin{align*}
\sum_{k=0}^{p}\l(\Delta v\l(k\r)\r)^2 &=\sum_{k=0}^{\l\lfloor\rho_tn^{1/3}/d\r\rfloor} \frac{td^2}{n^{2/3}}+\sum_{k=\l\lceil\rho_tn^{1/3}/d\r\rceil}^{\l\lfloor\l(t-1\r)n^{1/3}/d\r\rfloor}\l(\sqrt{t-\frac{\l(k+1\r)d}{n^{1/3}}}-\sqrt{t-\frac{kd}{n^{1/3}}}\r)^2\\
&+\sum_{k=\l\lceil\l(t-1\r)n^{1/3}/d\r\rceil}^{\l\lfloor tn^{1/3}/d\r\rfloor}\frac{d^2}{n^{2/3}}+f\l(t,n\r).
\end{align*}

Above $f(t,n)=\l(\frac{\sqrt{t}\l\lfloor\rho_tn^{1/3}/d\r\rfloor}{n^{1/3}}-\sqrt{t-\frac{\l\lceil\rho_tn^{1/3}/d\r\rceil d}{n^{1/3}}}\r)^2+\l(\frac{\l\lceil\l(t-1\r)n^{1/3}/d\r\rceil d}{n^{1/3}}-\sqrt{t-\frac{\l\lfloor\l(t-1\r)n^{1/3}/d\r\rfloor d}{n^{1/3}}}\r)^2$. By looking at the slope around the point $\rho_t$, it can be checked that $f(t,n)\leq\frac{\l(t+1\r)d}{n^{2/3}}$.
So, we have that 
\begin{align*}
    \sum\limits_{k=0}^{p}\l(\Delta\l(v\l(k\r)\r)\r)^2 \leq ctn^{-1/3}
\end{align*}for some fixed constant $c$ which is independent of $n$ and $t$. 
Hence we have 
\begin{align}\label{eq: Laguerre derivative square bound}
\sum\limits_{k=0}^{p}\l(\sqrt{mn}+\frac{\l(k+1\r)^2}{4\sqrt{mn}}\r)\l(\Delta\l(v\l(k\r)\r)\r)^2 \leq ctn^{2/3}.
\end{align}
Next we look at 
\begin{align*}
    \sum_{k=1}^{p}kv^2\l(k\r)
&=\sum_{k=1}^{{\l\lfloor\rho_tn^{1/3}/d\r\rfloor}}\frac{k^3td^2}{n^{2/3}}+\sum_{k=\l\lceil\rho_tn^{1/3}/d\r\rceil}^{\l\lfloor\l(t-1\r)n^{1/3}/d\r\rfloor}k\l(t-\frac{kd}{n^{1/3}} \r)+\sum_{k=\l\lceil\l(t-1\r)n^{1/3}/d\r\rceil}^{\l\lfloor tn^{1/3}/d\r\rfloor}k\l(t-\frac{kd}{n^{1/3}}\r)^2.
\end{align*}
As $\rho_t\rightarrow 1$ as $t\rightarrow\infty$, by elementary calculations it can be checked that for any $\vep>0$ we can choose $n$ and $t$ sufficiently large such that 
\begin{align}\label{eq: Laguerre left tail k vk^2 sum}
    \sum\limits_{k=1}^{p} kv^2\l(k\r)\leq 
    \frac{t^3n^{2/3}\l(1+\varepsilon\r)}{6d^2}.
\end{align}
Now, we have
\begin{align*}
\sum_{k=1}^{p}v^2\l(k\r)&=\sum_{k=1}^{{\l\lfloor\rho_tn^{1/3}/d\r\rfloor}}\frac{k^2td^2}{n^{2/3}}+\sum_{k=\l\lceil\rho_tn^{1/3}/d\r\rceil}^{\l\lfloor\l(t-1\r)n^{1/3}/d\r\rfloor}\l(t-\frac{kd}{n^{1/3}} \r)+\sum_{k=\l\lceil\l(t-1\r)n^{1/3}/d\r\rceil}^{\l\lfloor tn^{1/3}/d\r\rfloor}\l(t-\frac{kd}{n^{1/3}}\r)^2.
\end{align*}

 We get that for any $\vep>0$ there exists $n$ and $t$ sufficiently large such that 
\begin{align}\label{eq: Laguerre left tail v^2 sum}
    \l|\sum_{k=1}^{p}v^2\l(k\r)-\frac{t^2n^{1/3}}{2d}\r| \leq \frac{t^2n^{1/3}}{d}\vep.
\end{align}

It can also be checked that for any $\vep>0$ we can choose $n$ and $t$ sufficiently large such that 
\begin{align}\label{eq: Laguerre left tail mean term}
    \sum\limits_{k=1}^{p} \l(2k+\frac{\l({m}+{n}\r)\l(k+1\r)}{\sqrt{mn}}\r)v^2\l(k\r)&\leq 
    \frac{\l(\sqrt{m}+\sqrt{n}\r)^2t^3n^{2/3}\l(1+\varepsilon\r)}{\sqrt{mn}6d^2}+\frac{(m+n)t^2n^{1/3}\l(1+\varepsilon\r)}{\sqrt{mn}2d}\nonumber\\
    &\leq \frac{\l(\sqrt{m}+\sqrt{n}\r)^2t^3n^{2/3}\l(1+2\varepsilon\r)}{\sqrt{mn}6d^2}.
\end{align}

We also have that 
\begin{align*}
    \sigma^2&\leq \frac{2}{\beta}\l(m+n\r)\sum\limits_{k=1}^{p}\l(v^4\l(k\r)+v^2\l(k\r)v^2\l(k+1\r)\r)+\frac{4}{\beta}\sqrt{mn}\sum\limits_{k=1}^{p}\l(v^3\l(k\r)v\l(k+1\r)+v^3\l(k+1\r)v\l(k\r)\r)\\
    &\leq \l(\frac{2}{\beta}\l(m+n\r)\r)\l(\sum\limits_{k=1}^{p}2v^4\l(k\r)-\frac{1}{2}\sum\limits_{k=0}^{p}\l(\Delta\l(v^2\l(k\r)\r)\r)^2\r)\nonumber\\
    &\quad\quad +\frac{2}{\beta}\sqrt{mn}\l(\sum\limits_{k=1}^{p}4v^4\l(k\r)-\sum\limits_{k=0}^{p}\l(\Delta\l(v^2\l(k\r)\r)\r)^2-\sum\limits_{k=0}^{p}\l(\Delta\l(v\l(k\r)\r)\r)^2\l(v^2\l(k\r)+v^2\l(k+1\r)\r)\r).
\end{align*}

Next we look at 
\begin{align*}
\sum_{k=1}^{p}v^4\l(k\r)&=\sum_{k=1}^{{\l\lfloor\rho_tn^{1/3}/d\r\rfloor}}\frac{k^4t^2d^4}{n^{4/3}}+\sum_{k=\l\lceil\rho_tn^{1/3}/d\r\rceil}^{\l\lfloor\l(t-1\r)n^{1/3}/d\r\rfloor}\l(t-\frac{kd}{n^{1/3}} \r)^2+\sum_{k=\l\lceil\l(t-1\r)n^{1/3}/d\r\rceil}^{\l\lfloor tn^{1/3}/d\r\rfloor}\l(t-\frac{kd}{n^{1/3}}\r)^4.
\end{align*}

Hence we have that for any $\vep>0$ there exists $n$ and $t$ sufficiently large such that 
\begin{align*}
     \sum_{k=1}^{p}v^4\l(k\r) \leq \frac{n^{1/3}t^3}{3d}\l(1+\vep\r).
\end{align*}

Let $g(t,n)=\l(\frac{t\l\lfloor\rho_tn^{1/3}/d\r\rfloor^2d^2}{n^{2/3}}-\l({t-\frac{\l\lceil\rho_tn^{1/3}/d\r\rceil d}{n^{1/3}}}\r)\r)^2+\l(\frac{\l\lceil\l(t-1\r)n^{1/3}/d\r\rceil^2d^2}{n^{2/3}}-\l({t-\frac{\l\lfloor\l(t-1\r)n^{1/3}/d\r\rfloor d}{n^{1/3}}}\r)\r)^2$.
We observe the following.
\begin{align*}
\sum_{k=0}^{p}\l(\Delta v^2\l(k\r)\r)^2 &=\sum_{k=0}^{\l\lfloor\rho_tn^{1/3}/d\r\rfloor} \frac{td^2\l(2k+1\r)}{n^{2/3}}+\sum_{k=\l\lceil\rho_tn^{1/3}/d\r\rceil}^{\l\lfloor\l(t-1\r)n^{1/3}/d\r\rfloor}\frac{d^2}{n^{2/3}}
+\sum_{k=\l\lceil\l(t-1\r)n^{1/3}/d\r\rceil}^{\l\lfloor tn^{1/3}/d\r\rfloor}\frac{d^2\l(2k+1\r)}{n^{2/3}}+g\l(t,n\r).
\end{align*}
By looking at the slope of the function $g_t(x)$ and elementary calculations, it can be checked that for any $\vep>0$ there exists $n$ and $t$ sufficiently large such that 
\begin{align*}
     \sum\limits_{k=1}^{p}v^4\l(k\r)-\sum\limits_{k=0}^{p}\l(\Delta\l(v^2\l(k\r)\r)\r)^2-\sum\limits_{k=0}^{p}\l(\Delta\l(v\l(k\r)\r)\r)^2\l(v^2\l(k\r)+v^2\l(k+1\r)\r) \leq \frac{n^{1/3}t^3}{3d}\l(1+\vep\r).
\end{align*}
We can chose $t$ and $n$ sufficiently large such that $\sigma^2 \leq \frac{4\l(\sqrt{m}+\sqrt{n}\r)^2}{3 d \beta}t^3n^{1/3}\l(1+\vep\r)$. Using the bounds \eqref{eq: Laguerre derivative square bound}, \eqref{eq: Laguerre left tail k vk^2 sum}, \eqref{eq: Laguerre left tail v^2 sum}, \eqref{eq: Laguerre left tail mean term} in \eqref{eq: Laguerre mean lower bound}, we get for any $\vep>0$ there exists $n$ and $t$ sufficiently large such that $an\l(1-\frac{tbn^{-2/3}}{a}\r)\langle v,v\rangle-\mu\leq -\frac{bt^3n^{2/3}}{3d}\l(1-\varepsilon\r).$ 

Hence, we need an upper bound of the following event 
\[
Z \leq \frac{-\frac {b}{3d}t^3n^{2/3}\l(1-\vep\r)}{\frac{2\l(\sqrt{m}+\sqrt{n}\r)}{\sqrt{3d\beta}}t^{3/2} n^{1/6}\l(1+\vep\r)}.
\]

Applying upper bound for Gaussian tail we get that for any $\beta>0$ and small $\varepsilon,\delta>0$ there exist $n_{\varepsilon,\beta},t_{\varepsilon,\beta}$ such that for the range $t_{\varepsilon,\beta}\leq t\leq n^{\frac{1}{6}-\delta}$ and $n\geq n_{\varepsilon,\beta}$ with $m$ such that $\frac{m}{n}\leq C$ for some $C<\infty$, we have
\[\langle L'v,v\rangle\leq an\l(1-\frac{tbn^{-2/3}}{a}\r)\langle v,v\rangle\]
with probability at most $\exp\l(-\l(\frac{\beta}{24}-\varepsilon\r)t^{3}\r)$.

 Using Gershgorin circle theorem (Theorem $6.1.1$ of \cite{HJ}) and using the tail behaviour of normal random variable we get that the eigenvalues of the matrix $L''$ (introduced after \eqref{eq: Laguerre before KMT left tail}) are located in an interval $\l(-2tn^{\frac13-\frac{\delta}{3}},2tn^{\frac13-\frac{\delta}{3}}\r)$ with probability at least $1-6p\exp\l(-tn^{\frac13-\frac{\delta}{3}}/2\r)$. By Weyl's inequality we get that
 \begin{align*}
\lambda_{\text{max}}\l(\wih{L}_{n,m,\beta,p}\r)\leq an\l(1-\frac{tbn^{-2/3}}{a}\r)+2tn^{\frac13-\frac{\delta}{3}}
 \end{align*}
with probability at most $\exp\l(-\l(\frac{\beta}{24}+\varepsilon\r)t^{3}\r)-6p\exp\l(-tn^{\frac13-\frac{\delta}{3}}/2\r)$.

Further, note that by Weyl's inequality we have
\begin{align*}
&\l\{\lambda_{\max} \l(\wih{L}_{n,m,\beta,p} \r) \leq an-{tbn^{1/3}}+2tn^{\frac13-\frac{\delta}{3}}\r\}\\
&\subset \l\{ \lambda_{\max} \l(L_{n,m,\beta}\r) \leq an-{tbn^{1/3}}+2tn^{\frac13-\frac{\delta}{3}}+{\lambda_{\max} \l(\wih{L}_{n,m,\beta,p}-L_{n,m,\beta} \r)}\r\}.
\end{align*}
We take $x=tn^{\frac{1}{3}-\frac{\delta}{3}}$ and apply Proposition \ref{eq: estimate of off diagonals of the difference} to get,
\begin{align*}
&\mathbb{P}\l( \lambda_{\text{max}}\l(L_{n,m,\beta}\r)\leq an-t b n^{1/3}+\varepsilon t b n^{1/3}+2tn^{\frac13-\frac{\delta}{3}}\r)\\
&\leq \exp\l(- \l(\frac{\beta}{24}-\varepsilon\r)t^{3}\r)+6p\exp\l(-tn^{\frac13-\frac{\delta}{3}}/2\r)+8pe^{-tn^{\frac{1}{3}-\frac{\delta}{3}}}+4pe^{-\frac{\beta\l(n-p\r)}{2}}.
\end{align*}

 Hence we have that for any $\beta>0$ and any small $\varepsilon,\delta>0$ there exists $n_{\varepsilon,\beta}$ and $t_{\varepsilon,\beta}$ such that for $n\geq n_{\varepsilon,\beta}$ and $t_{\varepsilon,\beta}\leq t\leq n^{\frac{1}{6}-\delta}$ with $m$ such that $\frac{m}{n}\leq M$ for some $M<\infty$, we have
\begin{align}\label{eq: Laguerre left tail upper bound final form}
\mathbb{P}\l( \lambda_{\text{max}}\l(L_{n,m,\beta}\r)\leq an-t b n^{1/3}\r)
\leq \exp\l(- \l(\frac{\beta}{24}-\varepsilon\r)t^{3}\r).
\end{align}

\section{Lower bounds for left tails} \label{s: ltlb}

We make use of the following proposition. Let $A_n$ be an $n\times n$ tridiagonal matrix as shown below with $b_i, c_i > 0$, which makes them symmetrizable (by conjugating with appropriate diagonal matrices). 

\begin{align*}
    A_n=
\begin{bmatrix}
    a_{1} & c_1 & 0 & 0 & \cdots & 0 & 0 & \cdots & 0 \\
    b_1 & a_{2} & c_2 & 0 & \cdots & 0 & 0 & \cdots & 0\\
    0 & b_2 & a_{3} & c_3 & \cdots & 0 & 0 & \cdots & 0\\
    \vdots & \vdots & \vdots & \vdots & \vdots & \vdots & \vdots& \vdots& \vdots\\
    0 & \cdots  & b_{k-2} & a_{k-1} & c_{k-1}& 0 & 0 & \cdots & 0 \\
    0 & \cdots & 0 & b_{k-1} & a_{k} & c_{k} & 0 & \cdots & 0\\
    0 & \cdots & 0 & 0 & b_{k} & a_{k+1} & c_{k+1} & \cdots & 0\\
    \vdots & \vdots & \vdots & \vdots & \vdots & \vdots & \vdots& \vdots& \vdots\\
    0 & \cdots & 0 & 0 & \cdots &  \cdots &  \cdots &  \cdots &  a_{n}\\
\end{bmatrix}.
\end{align*}

\begin{proposition}\label{prop: Tridiag resultwith1}

 We denote the top and bottom $k\times k$ submatrices of $A_n$ by $A_{\text{top}_k}$ and $A_{\text{low}_k}$ respectively. Fix $b_0=u_0=0$ and $u_1>0$ with $c_n=1$. Define the following recursion with $\lambda\in\mathbb{R}$ for $1\leq i\leq n$,
 \begin{align*}
      c_iu_{i+1}=\l(\lambda-a_{i}\r)u_{i}-b_{i-1}u_{i-1}.
  \end{align*}
\end{proposition}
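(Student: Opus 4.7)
The plan is to reduce the claim to classical Sturm-sequence theory for Jacobi matrices, and then exploit the three-term structure to split the test at any chosen interface index $k$. First, since $b_i,c_i>0$, conjugation by the diagonal matrix $D=\mathrm{diag}(d_1,\dots,d_n)$ with ratios $d_{i+1}/d_i=\sqrt{c_i/b_i}$ turns $A_n$ into a genuinely symmetric tridiagonal matrix $\widetilde A_n$ with off-diagonal entries $\sqrt{b_ic_i}$. So the spectrum of $A_n$ is real and coincides with that of $\widetilde A_n$; in particular $\lambda_{\max}(A_n)$ is well defined, and likewise for $A_{\text{top}_k}$ and $A_{\text{low}_{n-k}}$.

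Next, I would identify the recursion with the characteristic polynomial. Expanding $\det(\lambda I-A_n)$ along the last row gives $p_i(\lambda)=(\lambda-a_i)p_{i-1}(\lambda)-b_{i-1}c_{i-1}\,p_{i-2}(\lambda)$ with $p_{-1}=0$ and $p_0=1$. Comparing with the recursion in the statement and choosing $u_1$ so that $c_0u_1=1$ (recall $c_n=1$ is a harmless normalization), an induction on $i$ shows $u_{i+1}=p_i(\lambda)/\prod_{j\le i}c_j$. Since $\prod_j c_j>0$, the signs of $u_1,u_2,\dots,u_{n+1}$ are exactly the signs of $p_0,p_1,\dots,p_n$.

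The conclusion I expect is then: if all the $u_i$ are strictly positive for $1\le i\le n+1$, then $\lambda>\lambda_{\max}(A_n)$ (and conversely any sign change exhibits eigenvalues above $\lambda$). This follows from the Sturm oscillation theorem for Jacobi matrices, whose standard proof goes by Cauchy interlacing of the principal minors $A_{\text{top}_i}$: the number of sign agreements in $(p_0,\dots,p_n)$ equals the number of eigenvalues of $A_n$ strictly less than $\lambda$. Alternatively one can argue variationally, constructing an explicit eigenvector-like test vector when a sign change occurs to produce a Rayleigh quotient $\ge\lambda$.

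The reason this formulation is useful for later sections, and what I see as the main point (rather than the main obstacle) is the clean split at the interface $k$. The recursion restricted to $1\le i\le k$ involves only $A_{\text{top}_k}$, so positivity of $u_1,\dots,u_{k+1}$ certifies $\lambda>\lambda_{\max}(A_{\text{top}_k})$. The recursion from $i=k+1$ to $i=n$ involves only the entries of $A_{\text{low}_{n-k}}$ together with the bridging pair $(b_k,c_k)$, and is initialized by the interface values $(u_k,u_{k+1})$. Reparametrizing by the ratio $r_k:=b_k u_k/(c_k u_{k+1})$, this second recursion is precisely the Sturm recursion for $A_{\text{low}_{n-k}}$ perturbed by a rank-one shift of size $r_k$ at the top corner, i.e.\ a spiked tridiagonal with a single spike at entry $(1,1)$. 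This is exactly the form in which the spiked matrix result of \cite{BV12} can be applied to the bottom block, with the value of $r_k$ determined by the top block analysis. I expect the genuine technical difficulty to lie not in proving the proposition itself (which is classical) but in the subsequent step of showing that $r_k$ can be controlled with high probability in the actual random matrices $\widehat H_{n,\beta,p}$ and $\widehat L_{n,m,\beta,p}$; the proposition is the clean algebraic identity that makes such a control meaningful.
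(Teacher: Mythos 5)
Your sketch correctly identifies the two background ingredients --- the classical Sturm-sequence equivalence between $\lambda>\lambda_{\max}(A_n)$ and positivity of $u_1,\dots,u_{n+1}$, and the idea of splitting the recursion at the interface index $k$ --- and both are indeed what the paper uses. But you only saw the recursion setup, not the proposition's hypotheses, and the specific mechanism you propose for the split is genuinely different from (and weaker than) the paper's. The proposition actually asserts a sufficient condition for $\lambda_{\max}(A_n)<\lambda$: \emph{(1)} $u_i>0$ for $1\le i\le k$ together with the monotonicity $u_k>u_{k-1}$, and \emph{(2)} $\lambda_{\max}(\overline A_{n-k+1})<\lambda$, where $\overline A_{n-k+1}$ is $A_{\text{low}_{n-k+1}}$ with its $(1,1)$ entry $a_k$ replaced by $a_k+b_{k-1}$.

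The genuine gap is in how the interface is handled. You propose encoding the coupling via the random spike $r_k=b_ku_k/(c_ku_{k+1})$ at the corner of $A_{\text{low}_{n-k}}$ and you correctly anticipate that controlling this random ratio later is the hard part. The paper avoids that entirely. Dividing the recursion at $i=k$ by $u_k$ shows the effective corner perturbation is $b_{k-1}u_{k-1}/u_k$ acting on the entry $a_k$ of $A_{\text{low}_{n-k+1}}$ (one index earlier than in your split), and the hypothesis $u_k>u_{k-1}$ bounds that ratio by $1$, so the effective spike is dominated by the \emph{fixed} entry $b_{k-1}$, independent of the recursion's outcome. The proof then introduces the comparison sequence $(\overline u_{k-1},\dots,\overline u_{n+1})$, the Sturm sequence for $\overline A_{n-k+1}$ with $\overline u_{k-1}=0$, $\overline u_k=1$, which is positive by hypothesis (2); setting $U_i:=u_i-u_k\overline u_i$ one finds $U_k=0$, $c_kU_{k+1}=b_{k-1}(u_k-u_{k-1})>0$, and for $i\ge k+1$ the $U_i$ satisfy the same three-term recursion as $u_i$ and $\overline u_i$, so $U_i=U_{k+1}\widetilde u_i$ where $\widetilde u$ is the Sturm sequence for the unperturbed block $A_{\text{low}_{n-k}}$. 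By Cauchy interlacing $\lambda_{\max}(A_{\text{low}_{n-k}})\le\lambda_{\max}(\overline A_{n-k+1})<\lambda$, hence $\widetilde u_i>0$, hence $U_i>0$, hence $u_i>u_k\overline u_i>0$ for all $i\le n+1$ and the Sturm criterion closes the argument. In short: the monotonicity hypothesis $u_k>u_{k-1}$ and the resulting deterministic spike $b_{k-1}$ are what make the proposition a self-contained algebraic lemma; your version, with a spike depending on $u_{k+1}$, pushes the control problem downstream instead of resolving it here.
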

Assume that for some $1<k<n$,
\begin{enumerate}
\item $u_i>0$ for $1\leq i\leq k$ and $u_k>u_{k-1}$. 
\item $\lambda_{\text{max}}\l(\overline{A}_{n-k+1}\r)<\lambda$ where  $\overline{{A}}_{n-k+1}$ is the matrix got after  replacing $a_{k}$  by $a_{k}+b_{k-1}$ in the matrix  $A_{\text{low}_{n-k+1}}$.
\end{enumerate}
Then $\lambda_{\max}(A_n)<\lambda$.
\begin{proof}
We use the well known fact of tridiagonal matrices that for the recursion defined above $u_i>0$ for all $1\leq i\leq n+1$ if and only if $\lambda_{\text{max}}\l(A_{n}\r)<\lambda$. Hence it is enough to show that $u_i>0$ for $k+1\leq i\leq n+1$.

    Let $(\overline{u}_{k-1},\ldots ,\overline{u}_{n+1})$ satisfy the same recursion with the same $\lambda$, but for the matrix $\overline{A}_{n-k+1}$. For convenience, we index it starting from $k-1$ instead of $0$. In particular, $\overline{u}_{k-1}=0$ and $\overline{u}_k=1$ and $c_{k}\overline{u}_{k+1}=\lambda-\l(a_{k}+b_{k-1}\r)$.  By the second assumption in the proposition, we have   $\overline{u}_j>0$ for $k\le j\le n+1$. Now define $U_{i}=u_i-u_k\overline{u}_i$ for $k\le i\le n+1$. Then $U_{k}=0$ and 
\begin{align*}
c_{k}U_{k+1} &= ((\lambda-a_{k})u_{k}-b_{k-1}u_{k-1})-u_{k}(\lambda -a_k-b_{k-1}) \\
&=b_{k-1}(u_{k}-u_{k-1}),
\end{align*}
which is positive by the first assumption. 

For $i\ge k+2$ , the recursion equations (which are linear) are the same for $u_i$ and $\overline{u}_i$, and therefore also the same for $U_i$. As $U_{k}=0$ and $U_{k+1}>0$, it follows that {for $k+1\leq i\leq n+1$ we have} $U_{i}=U_{k+1}\Tilde{u}_i$, where $\Tilde{u}=(\Tilde{u}_{k},\ldots ,\Tilde{u}_{n+1})$ is the solution of the recursion for the matrix $A_{\text{low}_{n-k}}$ (with $\Tilde{u}_{k}=0$ and $\Tilde{u}_{k+1}=1$). As $A_{\text{low}_{n-k}}$ is a principal submatrix of $\overline{A}_{n-k}$, its largest eigenvalue is less than $\lambda$, and hence $U_{i}>0$ for $k+1\le i\le n+1$. Consequently $u_i>u_k\overline{u}_i>0$. This shows that $u_i>0$ for all $i\le n+1$.\end{proof}




\subsection{Hermite Ensemble}

Fix $\varepsilon,\delta>0$. For this subsection, we work with the asymmetric matrix $\widetilde{H}_{n,\beta}$, 
\begin{align*}
     \widetilde{H}_{n,\beta}:=\begin{bmatrix}
X_1 & 1 & 0 &\cdots 0\\
Y_1^2 & X_2 & 1 & \cdots 0\\
0 & Y_2^2 & X_3 &\cdots 0\\
\vdots & \vdots & \vdots &\vdots
\end{bmatrix}_{n\times n},
\end{align*}
where $X_i\sim N\l(0,\frac{2}{\beta n}\r)$ i.i.d.\ random variables and $Y_i^2\sim \chi_{\beta\l(n-i\r)}^2/\beta n$ and all the entries of $\widetilde{H}_{n,\beta}$ are independent. By conjugation of $H_{n,\beta}/\sqrt{n} $ with the diagonal matrix $D$ where $D\l(i,i\r)=1/Y_1\dots Y_{i-1}$ and $D\l(1,1\r)=1$, we get the matrix $\widetilde{H}_{n,\beta}$. We work with the matrix $H^{\dagger}_{n,\beta}$ where $Y_i^2$ are replaced by $G_i=\frac{1}{n\beta}\l(\beta\l(n-i\r)+\sqrt{2\beta\l(n-i\r)}\zeta_i\r)$ for $i\leq p:=\l\lfloor tn^{1/3}\r\rfloor$, where $t_{\varepsilon}\leq t\leq n^{\frac{1}{9}-\delta}$ ($t_{\varepsilon}$ to be made precise later).  Here $\zeta_i$ are i.i.d.\ $N\l(0,1\r)$ random variables. We define a vector $u\in \R^{n+1}$ by $u\l(1\r)=1$ and $u\l(0\r)=0$ and the recursion
\begin{align}\label{eq: vector recursion}
    u\l(k+1\r)=\l(\lambda-X_k\r)u\l(k\r)-G_{k-1}u\l(k-1\r),
\end{align}
where $\lambda=2-tn^{-2/3}$. One can check using the tridiagonal matrix structure of $H^{\dagger}_{n,\beta}$ that if $u\l(k\r)>0$ for all $k$, then $\lambda_{\text{max}}\l(H^{\dagger}_{n,\beta}\r)\leq 2-tn^{-2/3}$.


Writing $\frac{u\l(k+1\r)}{u\l(k\r)}=1+w\l(k\r)$ and changing index from $k$ to $k+1$ in \eqref{eq: vector recursion}, we have that
\begin{align*}
    w\l(k+1\r)-w\l(k\r)&=-tn^{-2/3}-\frac{w\l(k\r)^2}{1+w\l(k\r)}+\frac{k}{n\l(1+w\l(k\r)\r)}-\frac{1}{\sqrt{\beta n}}\left[\sqrt{\beta n}X_{k+1}+\frac{\sqrt{2}\sqrt{1-\frac{k}{n}}}{1+w\l(k\r)}\zeta_{k}\right]\\
    &=-tn^{-2/3}-\frac{w\l(k\r)^2}{1+w\l(k\r)}+\frac{k}{n\l(1+w\l(k\r)\r)}-\frac{2}{\sqrt{\beta n}}\sigma\l(k\r)Z_k.
\end{align*}
where $Z_k$ are i.i.d.\ $N\l(0,1\r)$ random variables and $\sigma\l(k\r)^2=\frac{1}{2}+\frac{1-\frac{k}{n}}{2\l(1+w\l(k\r)\r)^2}$. Further $w\l(1\r)=1-tn^{-2/3}-X_1$.

The recursion of $w\l(k\r)$ can be written as
\begin{align*}
    w\l(k+1\r)=\frac{w\l(k\r)}{1+w\l(k\r)}-tn^{-2/3}+\frac{k}{n\l(1+w\l(k\r)\r)}-\frac{2}{\sqrt{\beta n}}\sigma\l(k\r)Z_k.
\end{align*}

 Denote the event $E_1=\{|Z_k|\leq n^{\frac{1}{6}-\frac{\delta}{6}} \text{ for all } k\leq p\}$. The event $E_1$ occurs with probability at least $1-n\exp\l(-n^{\frac{1}{3}-\frac{\delta}{3}}/2\r)$. Then on the event $E_1$, one can check that $w\l(k+1\r)$ is an increasing function of $w\l(k\r)$, if $w\l(k\r)>0$. Thus we start with $w\l(1\r)=5n^{-1/3}$ and calculate the probability that $w\l(k\r)>n^{-1/3}$ for $1\leq k\leq p$. Define $\mathcal{A}:=\l\{w\l(k\r)>n^{-1/3} \mbox{ for }k\leq p\r\}$.



\vspace{1mm}
\textbf{Estimating $\P\l(\mathcal{A}\r):$} Let $W\l(k\r)=n^{1/3}w\l(k\r)$ so that the recursion becomes,
\begin{align*}
    W\l(k+1\r)-W\l(k\r)=-tn^{-1/3}-\frac{W\l(k\r)^2}{n^{1/3}+W\l(k\r)}+\frac{k}{n^{2/3}\l(1+n^{-1/3}W\l(k\r)\r)}-\frac{2}{\sqrt{\beta }n^{1/6}}\sigma_k\l(W_k\r)Z_k,
\end{align*}
with $\sigma_k^2\l(x\r)=\frac{1}{2}+\frac{1-\frac{k}{n}}{2\l(1+n^{-1/3}x\r)^2}$. Note that $W\l(1\r)=5.$ Define
\begin{align*}
    \widehat{W}\l(k+1\r)-\widehat{W}\l(k\r)=-tn^{-1/3}-\frac{49}{n^{1/3}}+\frac{k}{n^{2/3}}-\frac{2}{\sqrt{\beta }n^{1/6}}Z_k,
\end{align*}
where $\widehat{W}\l(1\r)=5$. Call $\Delta\l(W\l(k\r)\r)=W\l(k+1\r)-W\l(k\r)$ and $\mathcal{A}_1=\l\{1\leq \widehat{W}\l(k\r)\leq 6\mbox{ for all }k\leq p\r\}$. We look at the behaviour of $W\l(k\r)$ on the event $\mathcal{A}_1\cap E_1$. Note that, if $W\l(k\r)<13/2,$ we have that $\Delta\l(W\l(k\r)\r)>\Delta\l(\widehat{W}\l(k\r)\r)$. And if $W\l(k\r)>15/2,$ we have that $\Delta\l(W\l(k\r)\r)<\Delta\l(\widehat{W}\l(k\r)\r)$. Also for the range $1\leq W\l(k\r)\leq 15$, we have that $\Delta\l(W\l(k\r)\r),\Delta\l(\widehat{W}\l(k\r)\r)\leq c_{\beta}/n^{\delta/6}$ (due to the event $E_1$). This is why when $W(k)>13/2$, it does not jump below $6$ and $\widehat{W}(k)$ in the next step. Since $W\l(1\r)=\widehat{W}\l(1\r)=5$, the random walks $\wih{W}(k)\leq W(k)$ as long as $W(k)\leq 13/2$. As the step sizes are of the order $1/n^{\delta/6}$, the first time when $W(k)>15/2$ and $W(k)$ also crosses $15$, the random walk $\wih{W}(k)$ would also have to cross $6$. But that cannot happen. Hence on the event $\mathcal{A}_1\cap E_1$ we have that $1\leq \widehat{W}\l(k\r)\leq W\l(k\r)\leq 15$.

As a result we compute the lower bound for $\P\l(\mathcal{A}_1\r)$. We have that 
\begin{align*}
    \widehat{W}\l(k+1\r)-5=-\l(t+49\r)\frac{k}{n^{1/3}}+\frac{k\l(k+1\r)}{2n^{2/3}}-\frac{2}{\sqrt{\beta}}\frac{1}{n^{1/6}}\sum\limits_{j=1}^kZ_j.
\end{align*}

Let $B$ be a standard Brownian motion and embed 
\begin{align*}
    k\rightarrow\frac{2}{\sqrt{\beta}}\frac{1}{n^{1/6}}\sum\limits_{j=1}^k Z_j
\end{align*}
in to it in the natural way. 
Therefore the event $\mathcal{A}_1$ occurs if the following event occurs:
\begin{align*}
    -4\leq -\l(t+49+\frac{1}{2n^{1/3}}\r)s+\frac{1}{2}s^2-\frac{2}{\sqrt{\beta}}B\l(s\r)\leq 1 \quad \mbox{ for }0\leq s\leq t.
\end{align*}

This is the same as 
\begin{align*}
    -\frac{\sqrt{\beta}}{2}\leq B\l(s\r)-g\l(s\r)\leq \frac{4\sqrt{\beta}}{2} \quad \mbox{ for } 0\leq s\leq t,
\end{align*}

where $g\l(s\r)=\frac{\sqrt{\beta}}{2}\l(s\l(t+49+\frac{1}{2n^{1/3}}\r)-\frac{1}{2}s^2\r)$. By the Cameron-Martin formula, for large enough $t$ (choose $t_{\varepsilon}$ here) 
\begin{align*}
    \P\l(\mathcal{A}_1\r)&\geq e^{-\frac{1}{2}\int\limits_{0}^{t}g'\l(s\r)^2ds}\P\l(-\frac{\sqrt{\beta}}{2}\leq B\l(s\r)\leq \frac{4\sqrt{\beta}}{2} \mbox{ for } 0\leq s\leq t\r)\\
    &\geq e^{-\l(\frac{\beta}{24}+\varepsilon\r)t^3}
\end{align*}
since the probability that a standard Brownian motion stays in a fixed interval around the origin for duration $t$ is at least $e^{-O\l(t\r)}$.

\vspace{1mm}

 We have shown that, $\mathcal{A}$ occurs with probability at least $e^{-\l(\frac{\beta}{24}+\varepsilon\r)t^3}-n\exp\l(-n^{\frac{1}{3}-\frac{\delta}{3}}/2\r)$ (the probability of the event $E_1$ is accounted in this) and we have, $u\l(k\r)>0$ for $k\leq p+1$ and $u\l(p+1\r)>u\l(p\r)$ (as $w\l(p\r)>0$). To apply part Proposition \ref{prop: Tridiag resultwith1}, we need to show that the matrix $\overline{H^{\dagger}}_{\text{low}_{n-p}}$ obtained by replacing first diagonal entry of $H^{\dagger}_{\text{low}_{n-p}}$ (bottom $n-p$ sub-matrix of $H^{\dagger}_{n,\beta}$) with $X_{p+1}+G_p^2$ has $\lambda_{\text{max}}\leq \lambda$. Note that $\lambda_{\text{max}}\l(\overline{H^{\dagger}}_{\text{low}_{n-p}}\r)$ is independent of $u\l(k\r)$ for $k\leq p+1$. In particular, $\P\l(\lambda_{\text{max}}\l(\overline{H^{\dagger}}_{\text{low}_{n-p}}\r)<\lambda|\mathcal{A}\r)=\P\l(\lambda_{\text{max}}\l(\overline{H^{\dagger}}_{\text{low}_{n-p}}\r)<\lambda\r)\geq c$ for some positive constant $c$ for all large enough $n$. Indeed this follows from the spiked random matrix result, Theorem $1.5$ of \cite{BV12}, by taking $\mu=1$. By Proposition \ref{prop: Tridiag resultwith1} we have that for any given $\varepsilon,\delta>0$ and $t_{\varepsilon}\leq t\leq n^{\frac{1}{9}-\delta}$, for all $n\geq n_{\varepsilon,\delta}$, we have $\P\l(\lambda_{\text{max}}\l(H^{\dagger}_{n,\beta}\r)\leq 2-tn^{-2/3}\r)\geq ce^{-\l(\frac{\beta}{24}+\varepsilon\r)t^3}$. Also with probability at least $1-2ne^{-\frac{\beta\l(n-p\r)}{8}}$, all the off-diagonal entries of $H^{\dagger}_{n,\beta}$ are non-negative. On that event we conjugate $H^{\dagger}_{n,\beta}$ to get back symmetric tridiagonal matrix. Note that this matrix is slightly different from $\wih{H}_{n,\beta,p}/\sqrt{n}$. Applying Gershgorin circle theorem one can check that $\P\l(\lambda_{\text{max}}\l(\wih{H}_{n,\beta,p}/\sqrt{n}\r)\leq 2-tn^{-2/3}+\frac{c_{\beta}x}{n}\r)\geq ce^{-\l(\frac{\beta}{24}+\varepsilon\r)t^3}-2ne^{-\frac{\beta\l(n-p\r)}{8}}-4ne^{-x}$. Taking $x=tn^{\frac{1}{3}-\frac{\delta}{3}}$ and applying Proposition \ref{eq: estimate of off diagonals of the difference} we have that for any given $\varepsilon,\delta>0$ and $t_{\varepsilon,\delta,\beta}\leq t\leq n^{\frac{1}{9}-\delta}$, for all $n\geq n_{\varepsilon,\delta,\beta}$, we have $\P\l(\lambda_{\text{max}}\l(H_{n,\beta}/\sqrt{n}\r)\leq 2-tn^{-2/3}\r)\geq e^{-\l(\frac{\beta}{24}+\varepsilon\r)t^3}$. 

\subsection{Laguerre Ensemble}

Fix $\beta>0$ and fix small $\varepsilon,\delta>0$.
Let $m=m\left(n\right)\geq n$ be such that there exist $M>0$ such that $1  \leq \frac mn <M$. We denote $\gamma:=m/n$ and $a=\left(1+\sqrt{\gamma}\right)^2$ and $b=\gamma^{-1/6}\left(1+\sqrt{\gamma}\right)^{4/3}.$ We consider  matrix $\widehat{L}_{n,m,\beta,p}$ where $p=\left\lfloor\frac{t n^{1/3} \left(\sqrt{\gamma}\right)^{2/3}}{\left(1+\sqrt{\gamma}\right)^{2/3}}\right\rfloor$ and obtain lower bound for probability of the event that $\lambda_{\text{max}}\left(\widehat{L}_{n,m,\beta,p}\right)\leq an\left(1-\frac{t(1-\delta)bn^{-2/3}}{a}\right)$, for some $\delta>0$ to be chosen later. Here $t_{\varepsilon}\leq t\leq n^{\frac{1}{9}-\delta}$ and the value $t_{\varepsilon}$ will be made precise later. We take $\lambda=\left(1+\sqrt{\gamma}\right)^2-\frac{t(1-\delta)\left(1+\sqrt{\gamma} \right)^{4/3}}{n^{2/3}\left(\sqrt{\gamma}\right)^{1/3}}$.

Note that $\widehat{L}_{n,m,\beta,p}$ can be written as a sum of two matrices $L'$ and $L''$, where $L''$ is a tridiagonal symmetric matrix whose entries are linear combinations of $\zeta_k^2,\left(\zeta'_k\right)^2, \zeta_{k+1}\zeta'_k$ in the top $p\times p$ submatrix and zeros elsewhere. We will work with the matrix $L'.$ The conclusion about the matrix $L'+L''$ can be done by arguing similarly as we did before using Gershgorin circle theorem.





We define $L=\frac{1}{\beta n}L'.$ We wish to apply Proposition \ref{prop: Tridiag resultwith1} with $b_i=c_i$ for all $i\leq n$. To do this first we look at the top $p \times p$ sub-matrix of $L$ and consider the event $\{u(1) <u(2)< \dots < u(p+1) \}$ where the $u(i)$'s are defined as follows. We define a vector $u\in \mathbb{R}^{n+1}$ by $u\left(1\right)=1$ and $u\left(0\right)=0$ and the recursion
\begin{align}\label{eq: vector recursion 1}
    L_{k,k+1}u\left(k+1\right)=\left(\lambda-L_{k,k}\right) u\left(k\right)-L_{k-1,k}u\left(k-1\right),
\end{align}
where $L_{n,n+1}=1$
By the tridiagonal structure of $L$ we have that if $u\left(k\right)>0$ for all $k$, then $\lambda_{\text{max}}\left(L\right)\leq \lambda$.
Writing $\frac{L_{k,k+1}u\left(k+1\right)}{u\left(k\right)}=L_{k,k+1}+w\left(k\right)$, and observing that \\ $L_{k,k}=\frac{1}{\beta n}\l(\beta (m+1-k)+\beta (n-k)+\sqrt{2\beta(m+1-k)}\zeta_k+\sqrt{2 \beta (n-k)}\zeta_k' \r)$ we have that for all $k\leq p$,
\begin{align*}
    w\left(k\right)&=1+2\sqrt{\gamma}+\gamma-\frac{t(1-\delta)\left(1+\sqrt{\gamma}\right)^{4/3}}{\left(\sqrt{\gamma}\right)^{1/3}n^{2/3}}-L_{k,k}-\frac{L_{k-1,k}^2}{L_{k-1,k}+w\left(k-1\right)}-L_{k,k+1}\\
\label{eq: RW recursion for laguerre2}    &=2\sqrt{\gamma}+\frac{2k-1}{n}-\frac{\sqrt{2\beta\left(m-k+1\right)}\zeta_k}{n\beta}-\frac{\sqrt{2\beta\left(n-k\right)}{\zeta'}_k}{n\beta}-\frac{t(1-\delta)\left(1+\sqrt{\gamma}\right)^{4/3}}{\left(\sqrt{\gamma}\right)^{1/3}n^{2/3}}\\
&-L_{k,k+1}-L_{k-1,k}+\frac{L_{k-1,k}w(k-1)}{L_{k-1,k}+w(k-1)}. \nonumber
\end{align*}
Note $w\left(k\right)$ is an increasing function of $w\left(k-1\right)$ for all $k
   \leq p$. Also, we have 
   \begin{align*}
   &w\left(1\right)=L_{1,2}u(2)-L_{1,2}=\left(\lambda-L_{1,1} \right)-L_{1,2}.
   \end{align*} 
   Note that the above quantity is positive and close to $\sqrt{\gamma}$ for $n$ large on an event $\mathcal{E}_1$ on which $|\zeta_i|,|\zeta_i'| \leq n^{1/6-\delta}$. We see that $\P \left( \mathcal{E}_1^c\right) \leq 2n \exp \left(-\frac{n^{1/3-2\delta}}{2} \right)$. We start at $w\left(1\right)=5n^{-1/3}$ and compute the probability of the event that $w\left(k\right)>0$ for $k\leq p$.




We take $W\left(k\right)=n^{1/3}w\left(k\right)$ (we get $W\left(1\right)=5$) and we look at the differences, 

 \begin{align*}
      & W\left(k\right)-W\left(k-1\right)=2\sqrt{\gamma}n^{1/3}-\frac{t(1-\delta)\left(1+\sqrt{\gamma} \right)^{4/3}}{\left(\sqrt{\gamma}\right)^{1/3}n^{1/3}}+\frac{2k-1}{n^{2/3}}-\sqrt{2\left(1-\frac kn \right)}\frac{\zeta_k'}{\sqrt{\beta}n^{1/6}}\\
      &-\sqrt{2\left(\gamma-\frac{k-1}{n} \right)}\frac{\zeta_k}{\sqrt{\beta}n^{1/6}}-\left(L_{k,k+1}+L_{k-1,k}\right)n^{1/3}-\frac{W(k-1)^2}{n^{1/3}L_{k,k-1}+W(k-1)}.
      \end{align*}
      We plug in the value of $L_{k,k+1}=\frac{1}{\beta n}\l(\beta\sqrt{(m-k)(n-k)}+\sqrt{\beta (m-k)}\frac{\zeta_k'}{\sqrt{2}}+\frac{\zeta_{k+1}}{\sqrt{2}}\sqrt{\beta (n-k)} \r)$ and $L_{k-1,k}$ we get
      \begin{align*}
      &W(k)-W(k-1)=-\frac{t(1-\delta)\left(1+\sqrt{\gamma} \right)^{4/3}}{\left(\sqrt{\gamma}\right)^{1/3}n^{1/3}}+\frac{2k-1}{n^{2/3}}-\sqrt{2\left(1-\frac kn \right)}\frac{\zeta_k'}{\sqrt{\beta}n^{1/6}}\\&-\sqrt{2\left(\gamma-\frac{k-1}{n} \right)}\frac{\zeta_k}{\sqrt{\beta}n^{1/6}}-\sqrt{\left(1-\frac kn \right)}\frac{\zeta_{k+1}}{\sqrt{2\beta}n^{1/6}}-\sqrt{\left(\gamma-\frac kn \right)}\frac{\zeta_k'}{\sqrt{2\beta}n^{1/6}}\\&-\sqrt{\left(1-\frac{k-1}{n} \right)}\frac{\zeta_{k}}{\sqrt{2\beta}n^{1/6}}-\sqrt{\left(\gamma-\frac {k-1}{n} \right)}\frac{\zeta_{k-1}'}{\sqrt{2\beta}n^{1/6}}+\frac{(1+\gamma)(2k-1)}{2\sqrt{\gamma}n^{2/3}}+O\left(\frac {k^2}{n^{5/3}} \right)\\&-\frac{W(k-1)^2}{n^{1/3}L_{k-1,k}+W(k-1)}.
 \end{align*}
We analyse a different but closely related process $\widehat{W}\left(k\right)$ with $\widehat{W}\left(1\right)=5$ and for all $k\leq p$ define
\begin{align*}
    \widehat{W}\left(k\right)-\widehat{W}\left(k-1\right)&=\frac{-t(1-\delta)\left(1+\sqrt{\gamma} \right)^{4/3}}{\left(\sqrt{\gamma}\right)^{1/3}n^{1/3}}+\frac{\left(1+\sqrt{\gamma}\right)^2}{2\sqrt{\gamma}}\frac{2k-1}{{n^{2/3}}}-\frac{M}{n^{1/3}} \nonumber\\
    &-\frac{\sqrt{2}\zeta_k'}{\sqrt{\beta}n^{1/6}}-\frac{\sqrt{2\gamma}\zeta_k}{\sqrt{\beta}n^{1/6}}-\frac{\zeta_{k+1}}{\sqrt{2}\sqrt{\beta}n^{1/6}}-\frac{\sqrt{\gamma}\zeta_k'}{\sqrt{2\beta}n^{1/6}}-\frac{\zeta_k}{\sqrt{2\beta}n^{1/6}}-\frac{\sqrt{\gamma}\zeta_{k-1}'}{\sqrt{2\beta}n^{1/6}},
\end{align*}

 where we choose $M$ is chosen so that on the event $\ce_1$
 \[
 \frac{49}{n^{1/3}L_{k-1,k}+7} \leq \frac{M}{n^{1/3}}.
 \]
 Define the event $\mathcal{A}_1:=\left\{1\leq \widehat{W}\left(k\right)\leq 6\mbox{ for all }k\leq p\right\}$. We look at the behaviour of $W\left(k\right)$ on the event $\mathcal{A}_1 \cap \ce_1$. Note that, if $0 < W\left(k\right)<13/2,$ we have that $\Delta\left(W\left(k\right)\right)>\Delta\left(\widehat{W}\left(k\right)\right)$. And there exists $N$ such that if $W\left(k\right)>N,$ we have that $\Delta\left(W\left(k\right)\right)<\Delta\left(\widehat{W}\left(k\right)\right)$. Now we have $W(1)=\widehat{W}(1)=5.$ So, $W(2) \geq \widehat{W}(2).$ Now, if $W(2) < 13/2,$ we have $W(3) \geq \widehat{W}(3).$ But, if $W(2) > 13/2,$ we apply the following argument. If possible let $W(3) < \widehat{W}(3)$ in this case. So, $W(3)-W(2) \leq -\frac 12.$ Two cases are possible. If $13/2 \leq W(2) \leq N$ then we see that $W(3)-W(2)$ is of order $\frac{1}{n^{\delta/6}}$ and we get $W(3) \geq \widehat{W}(3)$. Now, if $W(2) \geq N,$ we have 
 \[
 W(2)-W(1) < \widehat{W}(2)-\widehat{W}(1),
 \]
 which is not possible. Continuing this way, we see that as long as $W(k) \leq N,$ we have $W(k+1) \geq \widehat{W}(k+1).$ Let $k_0$ is the first time $W(k_0) > N$. Then 
 \[
 W(k_0)-W(k_0-1) \leq \widehat{W}(k_0)-\widehat{W}(k_0-1) \leq 6.
 \]
 By the above argument it is clear that $W(k)$ will always lie in a bounded set. Hence, $\Delta \left(W(k) \right)$ will always be of order $\frac{1}{n^{\delta/6}}$
. So, on the event $\mathcal{A}_1 \cap \mathcal{E}_1,$ 
 we have that $1\leq \widehat{W}\left(k\right)\leq W\left(k\right)\leq 2N$.


 

\textbf{$\P\left(\mathcal{A}_1\right):$} we have that for $1\leq k\leq p-1$
\begin{align*}
    &\widehat{W}\left(k+1\right)-5=-\left(\frac{t(1-\delta)\left(1+\sqrt{\gamma} \right)^{4/3}}{\left(\sqrt{\gamma}\right)^{1/3}}+M\right)\frac{k}{n^{1/3}}+\frac{\left(1+\sqrt{\gamma} \right)^2}{2\sqrt{\gamma}}\frac{\left(k+1\right)^2}{n^{2/3}}\\
    &-\left(1+\sqrt{\gamma} \right)\frac{\sqrt{2}}{\sqrt{\beta}n^{1/6}}\sum\limits_{j=2}^{k+1}{\zeta}_j-\left(1+\sqrt{\gamma} \right)\frac{\sqrt{2}}{\sqrt{\beta}n^{1/6}}\sum\limits_{j=1}^{k}{\zeta_j'}-\frac{\left(\zeta_{k+2}-\zeta_2 \right)\frac{1}{\sqrt{2}}+\left(\sqrt{2}+\frac{\sqrt{\gamma}}{\sqrt{2}}\right)\left( \zeta_{k+1}'-\zeta_1'\right) }{\sqrt{\beta}n^{1/6}}.
\end{align*}
As the step sizes are not independent, we compare the random walk $\widehat{W}\left(k\right)$ with $\widehat{U}\left(k\right)$ defined by $\widehat{U}\left(1\right)=5$ and for $1\leq k\leq p-1$
\begin{align*}
    &\widehat{U}\left(k+1\right)-5=-\left(\frac{t(1-\delta)\left(1+\sqrt{\gamma} \right)^{4/3}}{\left(\sqrt{\gamma}\right)^{1/3}}+M\right)\frac{k}{n^{1/3}}+\frac{\left(1+\sqrt{\gamma} \right)^2}{2 \sqrt{\gamma}}\frac{\left(k\right)^2}{n^{2/3}}\\
    &-\left( 1+\sqrt{\gamma}\right)\frac{\sqrt{2}}{\sqrt{\beta}n^{1/6}}\sum\limits_{j=2}^{k+1}{\zeta}_j-\left(1+\sqrt{\gamma} \right)\frac{\sqrt{2}}{\sqrt{\beta}n^{1/6}}\sum\limits_{j=1}^{k}{\zeta_j'}.
\end{align*}
We can embed the sums of Gaussians naturally in the Brownian motion $B\left[0,p/n^{1/3}\right]$ as done before. By comparing the difference between $\widehat{U}\left(k\right)$ and $\widehat{W}\left(k\right)$, we can check that the event $\mathcal{A}_1$ occurs if the following event occurs for $0\leq s\leq p/n^{1/3}$:
\begin{align*}
    -3\leq -\left(\frac{t(1-\delta)\left(1+\sqrt{\gamma} \right)^{4/3}}{\left( \sqrt{\gamma}\right)^{1/3}}+M\right)s+\frac{\left( 1+\sqrt{\gamma}\right)^2}{2 \sqrt{\gamma}}(s)^2-\frac{2 \left(1+\sqrt{\gamma} \right)}{\sqrt{\beta}}B\left(s\right)\leq 0.5.
\end{align*}
This is same as 
\begin{align*}
    -\frac{0.5\sqrt{\beta}}{\alpha}\leq B\left(s\right)-g\left(s\right)\leq \frac{3\sqrt{\beta}}{\alpha} \quad \mbox{ for } 0\leq s\leq p/n^{1/3},
\end{align*}

where $g\left(s\right)=\frac{\sqrt{\beta}}{\alpha}\left(s\left(\frac{t(1-\delta)\left(1+\sqrt{\gamma}\right)^{4/3}}{\left(\sqrt{\gamma}\right)^{1/3}}+M\right)-\frac{\left(1+\sqrt{\gamma}\right)^2}{2\sqrt{\gamma}}s^2\right)$ and $\alpha=2\left(1+\sqrt{\gamma} \right).$
We calculate 
\begin{align*}
&\int_{0}^{\frac{p}{n^{1/3}}} \left(g'(s)\right)^2 ds
\end{align*}
and by the Cameron-Martin formula, for sufficiently large $t$ depending on $M$ and $\varepsilon$ and $\delta$
\begin{align*}
&\P\left(\mathcal{A}_1\right)\geq  e^{-\left(\frac{\beta}{24}+\frac \varepsilon2\right)t^3}.
\end{align*}
Hence we have 
\[
\P\left(u(1)<u(2)<\dots<u(p+1)\right) \geq  \P\left (\mathcal{A}_1 \cap \mathcal{E}_1\right) \geq e^{-\left(\frac{\beta}{24}+\varepsilon\right)t^3}-ne^{-\frac{n^{1/3}-2\delta}{2}}.
\]
Once we get the lower bound for the probability of the event $\{u(1) < u(2) < \dots < u(p+1) \},$ to apply Proposition \ref{prop: Tridiag resultwith1} we will consider the lower $(n-p) \times (n-p)$ sub-matrix of $L.$ First recall that
\[
\wih{L}_{n,m,\beta,p}=L'+L'',
\]
where $L'$ has Gaussian entries and $L=\frac{1}{\beta n}L'$. Hence, $L_{p,p}=\wih{C_p}^2+\wih{D_p}^2,$ where\\ $\wih{C_p}^2=\frac{1}{\beta n}\l(\beta(n-p)+\sqrt{2\beta(n-p)}\zeta_p' \r)$ and $\wih{D_p}^2=\frac{1}{\beta n}\l(\beta(m+1-p)+\sqrt{2\beta(m+1-p)}\zeta_p \r)$. We consider a slightly modified version of the matrix $L$ whose $(p,p+1)$ and $(p+1,p)$ entry is replaced by $\wih{C}_pD_{p+1}$. We will show for this matrix the desired lower bound holds. Then by applying Gershgorin's circle theorem and Weyl's inequality as we have done before, we conclude the lower bound for $L$. By abuse of notation we will call this modified matrix also $L$. We consider $L_{\text{low}_{n-p}}$ and to apply Proposition \ref{prop: Tridiag resultwith1} we add $L_{p+1,p}=\frac{\wih{C_p}D_{p+1}}{\beta n}$ to the first entry $\l(\frac{C_{p+1}^2+D_{p+1}^2}{\beta n}\r)$ 
(we recall here that $C_i$ and $D_i$ are the $\chi$ random variables as defined in the definition of Laguerre-$\beta$ ensemble in \eqref{eq: tridiag matrix}). As consistent with the notation of Proposition \ref{prop: Tridiag resultwith1}, denote this lower submatrix with the added entry as $\overline{L}_{low_{n-p}}$. To get an estimate for the event $\left \{\lambda_{\max}\l(\overline{L}_{low_{n-p}} \r)< \lambda \right \}$, our goal is to use \cite[Theorem 1.1]{BV12} about spiked Laguerre random matrices on $\overline{L}_{low_{n-p}}$. Note that the first entry of $\overline{L}_{low_{n-p}}$ is $\frac{D_{p+1}^2}{\beta n} \l(1+\frac{\wih{C_{p}}}{D_{p+1}} \r)+\frac{C_{p+1}^2}{\beta n}$. To apply \cite[Theorem 1.1]{BV12} we consider the following matrix. Let $\widetilde{L}_{low_{n-p}}$ be the same matrix as $\overline{L}_{low_{n-p}}$ except the $(1,1)$ entry is replaced by $\frac{D_{p+1}^2}{\beta n}\left(1+\sqrt{\frac{n-p}{m-p}} \right)+\frac{C_{p+1}^2}{\beta n}$. We see that 
\[
\left\{\frac{\wih{C_p}}{D_{p+1}} \leq \sqrt{\frac{n-p}{m-p}}\right\} \cap \left \{\lambda_{\max} \left(\widetilde{L}_{low_{n-p}} \right) < \lambda \right\} \subset \left \{\lambda_{\max} \left( \overline{L}_{low_{n-p}}\right) <\lambda \right\}. 
\]
We see that $\wih{C}_p $ and $D_{p+1}$ are independent and 
\[
\P \left(\frac{\wih{C_p}}{\sqrt{\beta n}} \leq \sqrt{\frac{\beta(n-p)}{\beta n}} \right) \geq \frac 12.
\]

\[
\P \left(\frac{D_{p+1}}{\sqrt{\beta n}} \geq \sqrt{\frac{\beta(m-p)}{\beta n}} \right) \geq c,
\]
for some $c>0.$
So, we have that 
\[
\P \left( \frac{\wih{C_p}}{D_{p+1}} \leq \sqrt{\frac{n-p}{m-p}}\right) \geq \frac c2.
\]
Now, we look at the event $\left \{ \lambda_{\max} \left(\widetilde{L}_{low_{n-p}}\right) < \lambda \right\}.$ By \cite[Theorem 1.1]{BV12} (we take $\ell=1+\sqrt{\frac{n-p}{m-p}}$ in \cite[Theorem 1.1]{BV12}) we get that 
\[
\gamma_{m,n,p}\l(\lambda_{\max} \l(n\widetilde{L}_{low_{n-p}} \r)-\l(\sqrt{m-p}+\sqrt{n-p} \r)^2\r) \overset{d}{\rightarrow} \Lambda,
\]
where $\gamma_{m,n,p}=\frac{\l(\sqrt{(m-p)(n-p)} \r)^{1/3}}{\l(\sqrt{m-p}+\sqrt{n-p}\r)^{4/3}}$ and $\Lambda$ is a random variable supported on the whole real line.
Hence, we can choose $M$ large enough so that 
\begin{align*}
&\P \left( \lambda_{\max} \left(n\widetilde{L}_{low_{n-p}} \right) \leq \left( \sqrt{m-p}+\sqrt{n-p}\right)^2+M(n-p)^{1/3}\frac{\left(1+\sqrt{\frac{m-p}{n-p}}\right)^{4/3}}{\left( \sqrt{\frac{m-p}{n-p}}\right)^{1/3}}\right) > 1-\frac c2\\
 & \implies \P \left(\lambda_{\max} \left(n \widetilde{L}_{low_{n-p}} \right) \leq n\left( \sqrt{\gamma}\left(1-\frac{p}{2m} \right)+1-\frac {p}{2n}\right)^2+n^{1/3}M\frac{\left(1+\sqrt{\frac{m-p}{n-p}}\right)^{4/3}}{\left( \sqrt{\frac{m-p}{n-p}}\right)^{1/3}} \right)> 1-\frac c2\\
\end{align*}
Hence, we get
\begin{align*}
&\P \left(\lambda_{\max} \left(n\widetilde{L}_{low_{n-p}} \right) \leq n\left(1+\sqrt{\gamma}\right)^2-pn\frac{(1+\sqrt{\gamma})^2}{\sqrt{\gamma}} +p^2n \left(\frac{\sqrt{\gamma}}{2m} +\frac{1}{2n}\right)^2+n^{1/3}M\frac{\left(1+\sqrt{\frac{m-p}{n-p}}\right)^{4/3}}{\left( \sqrt{\frac{m-p}{n-p}}\right)^{1/3}} \right)\\
& >1-\frac c2.
\end{align*}
So, by plugging in the value of $p$ we get that we can chose $t,n$ large enough depending on $M$ such that 
\[\P \left(\lambda_{\max}\left(n\widetilde{L}_{low_{n-p}}\right) < n \left(1+\sqrt{\gamma} \right)^2-t\left(1-\delta \right)n^{1/3}\frac{\left(1+\sqrt{\gamma} \right)^{4/3}}{\left(\sqrt{\gamma} \right)^{1/3}} \right) > 1-\frac c2. 
\] 
Let the above probability be $c$ for some $c>0.$ Once we have this, we need to combine the two events $\{ u(1) < u(2) < \dots <u(p+1) \}$ and the event that $\{\lambda_{\max}\l( \overline{L}_{low_{n-p}}\r) < \lambda \}.$ Note that as both the events involve $\wih{C}_p,$ they are not independent. So, we do the following. We define the following events. Let $\widetilde{L}_{top}$ be the upper $p \times p$ submatrix of $L$ with the $(p,p)$ entry is replaced by $\frac{\wih{D}_{p}^2}{\beta n}+\frac{(m-p)}{\beta n}$ and $\widetilde{u}(i)$ are the real numbers defined by the similar recursion relations as we did before defined by this matrix. Note that $u(i)=\widetilde{u}(i)$ for all $1 \leq i \leq p$ and $u(p+1)>\widetilde{u}(p+1).$ Let us now define the following events.
\begin{itemize}
    \item$\mathcal{A}:=\left \{u(1)<u(2)<\cdots u(p) < u(p+1)\right\}$.
    \item $\widetilde{\mathcal{A}}:=\left \{\widetilde{u}(1) < \widetilde{u}(2)< \cdots <\widetilde{u}(p+1) \right\}$.
    \item $\mathcal{C}:=\left\{ \frac{\wih{C}_p}{\sqrt{\beta n}} \leq \sqrt{\frac{\beta(n-p)}{\beta n}}\right\}.$
    \item $\mathcal{D}:=\left \{\frac{D_{p+1}}{\sqrt{\beta n}} \geq \sqrt{\frac{\beta(m-p)}{\beta n}}\right\}.$
    \item $\widetilde{\mathcal{M}}:=\left \{\lambda_{\max}\left(\widetilde{L}_{low_{n-p}}\right) < n \left(1+\sqrt{\gamma} \right)^2-tn^{1/3}\left(1-\delta \right)\frac{\left(1+\sqrt{\gamma} \right)^{4/3}}{\left(\sqrt{\gamma} \right)^{1/3}} \right\}.$
    \item $\overline{\mathcal{M}}:=\left \{\lambda_{\max}\left(\overline{L}_{low_{n-p}}\right) < n \left(1+\sqrt{\gamma} \right)^2-tn^{1/3}\left(1-\delta \right)\frac{\left(1+\sqrt{\gamma} \right)^{4/3}}{\left(\sqrt{\gamma} \right)^{1/3}} \right\}.$
\end{itemize}
We observe the following. 
\[
\widetilde{\mathcal{A}} \cap \mathcal{C} \cap \mathcal{D} \cap \widetilde{\mathcal{M}} \subset \mathcal{A} \cap \overline{\mathcal{M}}.
\]
Now, we see that $\widetilde{\mathcal{A}}$ is independent of $\mathcal{C}, \mathcal{D}$ and $\widetilde{\mathcal{M}}$. This is because, $\widetilde{\mathcal{A}}$ does not depend on $\wih{C}_p$ or $D_{p+1}$, as on the event $\widetilde{\ca} \cap \cc$ we have replaced the $\wih{C}_p$ entry by a larger constant . But now using this independence we get 
\[
\P \left(\mathcal{A} \cap \overline \cm \right) \geq \P \left(\mathcal{C} \cap \mathcal{D} \cap \wt \cm \right) \P \left( \wt \ca\right) \geq c\P(\wt {\mathcal{A}}).
\]
Now, observe that $\widetilde{L}_{top}$ is same as the upper submatrix of $L$ except the $(p,p)$ entry. So, all the recursion calculations remain valid for this matrix as well. Hence, we can conclude that 
\[
\P(\widetilde{\mathcal{A}}) \geq e^{-\left(\frac{\beta}{24}+\varepsilon\right)t^3}.
\]
Combining all the above and by the Proposition \ref{prop: Tridiag resultwith1} we get that 
\[
\P \left(\lambda_{\max}\left(L'\right) \leq n\left(1+\sqrt{\gamma} \right)^2-\frac{n^{1/3}t(1-\delta)\left(1+\sqrt{\gamma} \right)^{4/3}}{\left(\sqrt{\gamma} \right)^{1/3}}\right) \geq e^{-\left(\frac{\beta}{24}+\frac \varepsilon2 \right)t^3}.
\]
Now, choosing $\delta$ appropriately depending on $\varepsilon$, applying Gershgorin's circle theorem and Weyl's inequality, Proposition \ref{eq: estimate of off diagonals of the difference} to compare $\lambda_{\max}\left(L_{n,m,\beta} \right)$ and $\lambda_{\max}\left(\widehat{L}_{n,m,\beta,p} \right)$ as before will give us the desired lower bound of $\lambda_{\max}\l(L_{n,m,\beta} \r)$. 

\section{Lim sup for the LILs}
\label{sec: limsup}
We will prove the law of iterated logarithm results in the following sections. Before starting with the proofs we collect some notations that will be used throughout the sections.

\paragraph{\textbf{Notations:}} It is sometimes convenient to work with rotated axes in $\Z^2, x+y=0$ and $x-y=0.$ They are known as \textit{space axis} and \textit{time axis } respectively. For $u \in \Z^2, $ we will use $\phi(u)$ and $\psi(u)$ to denote the time and space coordinates respectively. In particular, if $u=(u_1,u_2)$
\[
\phi(u)=u_1+u_2,\qquad  \psi(u)=u_1-u_2.
\]
Recall that  $\bo{r}$ denotes the vertex $(r,r)$ and $\cl_r$ denotes the line $x+y=r$.  $T_n^{\ptop{}},T_n^{\ptohalfspace{}}$ denote the passage time between $\bo{0}$ and $\bo{n}$ in the exponential LPP and the half space model respectively. $T_n^{\ptoline{}}$(resp.\ $T_n^{\linetop{}}$) will denote the passage time between $\bo{0}$ and $\cl_{2n}$ (resp.\ $\cl_0$ to $\bo n$) in the exponential LPP. $\Gamma_n^{\ptoline{}},\Gamma_n^{\linetop{}}, \Gamma_n^{\ptoline{}}, \Gamma_n^{\ptohalfspace{}}$ are the corresponding point-to-point, line-to-point, point-to-line, point-to-point in half space geodesics respectively. If $\Gamma$ is a geodesic, let $\Gamma(r)$  denote the random vertex where $\Gamma$ intersects $\cl_r.$ For $u,v \in \Z^2$ with $u \leq v, T_{u,v}$ without any superscript will always denote the point-to-point passage time in the exponential LPP model (i.e., when we have i.i.d. Exp(1) random variables on each vertices of $\Z^2$). Similarly, for $u \in \Z^2,$ with $\phi(u) \leq r$ (resp.\ $\phi(u) \geq r$) $T_{u, \cl_r}$ (resp.\ $T_{\cl_r,u}$) without any superscript will denote the line-to-point passage time between $u$ and $\cl_r$ (resp.\ point-to-line passage time between $\cl_r$ and $u$) in the exponential LPP. For $u, v \in \Z^2,$ with $u \leq v, T^{\ptohalfspace{}}_{u,v}$ will denote the last passage time between $u$ and $v$ in the half space model.
\begin{remark}
    Throughout the proofs of this section, in the definition of $T_n^{\ptop{}},T^{\ptoline{}}_{n}, T_n^{\ptohalfspace{}}$ we will usually remove initial vertex and keep the final vertex. In $T^{\linetop{}}_{n}$ we will usually exclude the final vertex. Note that Theorem \ref{t: passage time tail estimates} holds for these variants and  proving Theorem \ref{t: LILs} for these versions will suffice. We will mention explicitly in the corresponding proofs which version of last passage time we are using.
\end{remark}
\subsection{Upper Bounds} We will prove the upper bounds for $\limsup$ for the point-to-line and line-to-point cases in Theorem \ref{t: LILs}. Upper bounds for the $\limsup$ for point-to-point case and the half space case will follow from the $\limsup$ upper bound for the point-to-line case and ordering \eqref{eq: ordering}. Also, note that $\limsup$ upper bound for the point-to-point case was already proved in \cite[Theorem 1]{L18}.\\
To prove the $\limsup$ upper bounds for point-to-line and line-to-point case we require the following lemma about point-to-line and line-to-point passage times.
\begin{lemma}
\label{maximal_inequality for line to point}
Let $T_n^*$ denote $T_n^{\ptoline{}}$ or $T_n^{\linetop{}}$. There exists $c>0$ such that for any natural numbers $k$ and $\ell$ with $k \leq \ell$ and for any real numbers $t$ we have
    \[
    \P \left( \max_{k \leq n \leq \ell} (T^*_{n}-4n) \geq t \right) \leq \frac 1c \P \left( T^*_{\ell} -4 \ell \geq t\right).
    \]
\end{lemma}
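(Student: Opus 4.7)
The strategy follows Ledoux's point-to-point argument in \cite[Lemma 3]{L18}: define a stopping index $\tau$ equal to the first $n \in [k,\ell]$ at which the event $\{T_n^* - 4n \geq t\}$ occurs, partition according to $\{\tau = n\}$, use a super-additivity bound to lower-bound $T_\ell^*$ by $T_n^*$ plus an independent passage time on the remaining strip, and observe that the increment exceeds its mean with probability at least a constant $c > 0$ uniformly in the remaining size. Concretely, I would prove
\[
\P\!\left(T_\ell^* - 4\ell \geq t\right) \;\geq\; c\sum_{n=k}^{\ell}\P\!\left(\tau = n\right)\;=\;c\,\P\!\left(\max_{k \leq n \leq \ell}(T_n^* - 4n) \geq t\right),
\]
where $c := \inf_{m \geq 0}\P(T_m^{\ptop{}} \geq 4m)$, which is strictly positive: by weak convergence of $(T_m^{\ptop{}} - 4m)/(2^{4/3}m^{1/3})$ to $\mathrm{TW}_2$ we have $\P(T_m^{\ptop{}} \geq 4m) \to \P(\mathrm{TW}_2 \geq 0) > 0$ as $m \to \infty$, and for each fixed $m$ the probability is positive by continuity of the weight distribution.

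\textbf{Line-to-point case.} If $T_n^{\linetop{}} - 4n \geq t$, concatenating the line-to-point geodesic from $\cl_0$ to $\bo{n}$ with any up-right path from $\bo{n}$ to $\bo{\ell}$ gives $T_\ell^{\linetop{}} \geq T_n^{\linetop{}} + T_{\bo{n},\bo{\ell}}$ (recalling the vertex-inclusion convention that excludes the endpoint of $T^{\linetop{}}$ so no double counting occurs). The event $\{\tau = n\}$ is measurable with respect to the weights $\{\zeta_v : \phi(v) < 2n\}$ while $T_{\bo{n},\bo{\ell}}$ is measurable with respect to $\{\zeta_v : \phi(v) \geq 2n\}$, so the two are independent. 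Since $T_{\bo{n},\bo{\ell}} \stackrel{d}{=} T_{\ell-n}^{\ptop{}}$, summing over $n$ yields the desired inequality with constant $c$.

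\textbf{Point-to-line case.} Here one must be slightly more careful because the endpoint of $\Gamma_n^{\ptoline{}}$ is random. Let $v_n \in \cl_{2n}$ denote (the a.s.\ unique) endpoint of $\Gamma_n^{\ptoline{}}$; then $T_\ell^{\ptoline{}} \geq T_{\bo{0},v_n} + T_{v_n,\cl_{2\ell}}$. Condition on the weights $\{\zeta_w : \phi(w) \leq 2n\}$, which determine both $\{\tau = n\}$ and $v_n$; the passage time $T_{v_n,\cl_{2\ell}}$ depends only on the independent weights $\{\zeta_w : \phi(w) > 2n\}$. Crucially, for every deterministic $v \in \cl_{2n}$, translation invariance gives $T_{v,\cl_{2\ell}} \stackrel{d}{=} T_{\ell-n}^{\ptoline{}}$, so
\[
\P\!\left(T_{v_n,\cl_{2\ell}} \geq 4(\ell - n) \,\big|\, \tau = n, v_n\right) \;\geq\; c,
\]
uniformly in the value of $v_n$. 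Summing over $n$ as before yields the claim.

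\textbf{Main obstacle.} The only subtle point is ensuring the constant $c$ can be chosen uniformly across all $m = \ell - n \in \{0, 1, \ldots, \ell - k\}$; this requires combining the weak convergence limit for large $m$ with positivity for each small $m$, after which a single $c > 0$ suffices. The handling of vertex-inclusion conventions (so that the super-additive split does not double count $\bo{n}$ or any vertex on $\cl_{2n}$) is routine but must be stated once; the remark following Theorem \ref{t: passage time tail estimates} already notes that the tail bounds are insensitive to these conventions, and the same is true here.
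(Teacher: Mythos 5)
Your argument is correct and follows essentially the same route as the paper's own proof: the stopping index $\tau$ is exactly the paper's disjoint decomposition into the events $\ca_n^*$ (first $n$ where the exceedance occurs), the super-additive lower bound on $T_\ell^*$ and the resulting independence of $\{\tau=n\}$ from the residual passage time is the same (relying on the same vertex-inclusion conventions), and the uniform positive lower bound $c$ comes from the same weak-convergence-plus-positivity observation; the point-to-line case is handled by the same conditioning on the random endpoint $v_n$ together with translation invariance of $T_{v,\cl_{2\ell}}$. The only cosmetic detail worth stating explicitly is that your constant $c=\inf_m\P(T_m^{\ptop{}}\ge 4m)$ lower-bounds $\P(T_{\ell-n}^{\ptoline{}}\ge 4(\ell-n))$ via the a.s.\ ordering $T_m^{\ptoline{}}\ge T_m^{\ptop{}}$, a step you use implicitly but don't spell out.
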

\begin{proof} In this proof, for $T_{n}^{\linetop{}}$ and $ T_{u,v}$ we exclude the final vertex but keep the initial vertex and for $T_{n}^{\ptoline{}}$ we will remove the initial vertex and keep the final vertex. The proof follows the same idea as the proof of \cite[Lemma 3]{L18}. Precisely, we do the following. For both the passage times, for $k \leq n \leq \ell,$ we define the following events for line-to-point and point-to-line case.
    \[
    {\ca}^*_n:=\bigcap_{i=k}^{n-1}\Big \{T^*_{i}-4i < t \Big \} \bigcap \Big \{ T^*_{n}-4n \geq t \Big \}. 
    \]
    Clearly, we have for both the cases
    \[
     \Big \{ \max_{k \leq n \leq \ell} (T^*_{n}-4n) \geq t \Big \}=\bigsqcup_{n=k}^{\ell} \ca_n^*.
    \]
    We further define the following events for $k \leq n \leq \ell$.
\begin{itemize}
    \item $\cb_n^{\linetop{}}:=\Big \{ T_{\bo{n},\bo{\ell}}-4(\ell-n) \geq 0\Big \}$.
    \item $\cb_n^{\ptoline{}}:= \left \{ T_{\Gamma_n^{\ptoline{}}(2n), \cl_{2\ell}}-4(\ell-n) \geq 0\right\}.$
\end{itemize}
We use the common notation $\cb_n^*$ for $\cb_n^{\linetop{}}$(line-to-point case) and $\cb_n^{\ptoline{}}$ (point-to-line case) . Then clearly for both the cases we have (for $\ca_n^*$ corresponding $T_n^*$ and $\cb_n^*$ are taken)
\[
\bigsqcup_{n=k}^{\ell} \left( \ca_n^* \cap \cb_n^* \right) \subset \Big \{ T_\ell^* -4 \ell \geq t \Big \}.
\]
Now, for both the cases we observe that by the definition of the passage times $\ca_n^*$ and $\cb_n^*$ are independent.
Hence, we have 
\[
\min_{k \leq n \leq \ell} \P \left(\cb_n^* \right) \sum_{n=k}^{\ell}\P(\ca_n^*) \leq \P \left(T_\ell^* -4 \ell \geq t \right).
\]

Also, as we have removed the initial vertex from the point-to-line passage time definition and using the fact that $T_{v, \cl_{2n}}$ has same distribution as $T^{\ptoline{}}_{n}$ for all $v \in \cl_0,$ we have 
\[
\P \left(\cb_n^{\ptoline{}} \right)=\P \left(T^{\ptoline{}}_{\ell-n}-4(\ell-n) \geq 0 \right).
\]
Now, using the convergence in distribution of $\frac{T_n^{\ptop{}}-4n}{n^{1/3}}, \frac{T_n^{\ptoline{}}-4n}{n^{1/3}} $ we can say that there exists $c>0$ such that for all $k, \ell$, 
\[\min_{k \leq n \leq \ell} \P \left( \cb_n^*\right) \geq c.
\]
This completes the proof for both cases.
    \end{proof}

\begin{proof}[Proof of $\limsup$ Upper Bounds] First we prove upper bound in line-to-point case. The proof follows similar ideas as in the proof of \cite[Theorem 1]{L18} upper bound. Recall that in this case while considering line-to-point passage time we exclude the final vertex and keep the initial vertex. Let us consider $n_k=[\rho^k]$ for some $\rho>1$ (to be chosen later).
For $k \geq 1$ and for some $\delta>0$ We define the following events. 
\[
\cc_k^{\delta}:=\Big\{\max_{n_{k-1}<n \leq n_k}\frac{T_n^{\linetop{}}-4n}{g_+(n)} \geq \left (\frac34 \right)^{2/3}+\delta \Big \}.
\]
We will show that for all $\delta>0,$
\begin{equation}
\label{series_converges}
\sum_{k=1}^{\infty}\P(\cc^{\delta}_k) < \infty.
\end{equation}
Then by Borel-Cantelli lemma the required upper bound follows.
We have 
\[
\P(\cc^{\delta}_k) \leq \P \left ( \max_{n_{k-1}<n \leq n_k}T^{\linetop{}}_n-4n \geq \left(\left (\frac34 \right)^{2/3}+\delta \right)(g_+(n_{k-1}))\right).
\]
Now, using Lemma \ref{maximal_inequality for line to point}, we have the following.  
\begin{equation}
\label{upper_bound_to_apply_BC}
 \P(\cc_k^{\delta})\leq \frac 1c \P\left( T_{n_k}^{\linetop{}}-4n_k \geq \left(\left (\frac34 \right)^{2/3}+\delta\right)(g_+(n_{k-1}))\right),
\end{equation}
for some $c>0.$
Now, we chose $\rho$ close enough to 1 so that there exists $0< \delta'< \delta$ such that for all $k$ large enough we have 
\[
\left (\left (\frac34 \right)^{2/3}+\delta) \right)g_+(n_{k-1}) \geq \left (\left (\frac34 \right)^{2/3}+\delta' \right ) g_+(n_k).
\]
So, by \eqref{upper_bound_to_apply_BC} and Theorem \ref{t: passage time tail estimates}, (i) for any $\vep>0$,
\[
\sum_{k=1}^{\infty}\P(\cc_k^{\delta}) \leq \frac 1c \sum_{k=1}^{\infty}\P \left( T_{n_k}^{\linetop{}}-4n_k \geq \left(\left (\frac34 \right)^{2/3}+\delta'\right)g_+(n_k)\right) \leq \frac 1c\sum_{k=1}^{\infty}e^{-\left(\frac43-\vep\right) \left( \left(\frac34 \right)^{2/3}+\delta'\right)^{3/2} \log \log n_k}.
\]
Note that the right hand side defines a convergent series if we chose $\vep$ small enough. Hence, we conclude \eqref{series_converges}.\\
Proof of the $\limsup$ upper bound for the point-to-line case is exactly same as proof of line-to-point case. We just apply Lemma \ref{maximal_inequality for line to point} for the point-to-line case. Rest of the argument is essentially same. 
\end{proof}
\subsection{Lower Bounds} We will prove only the $\limsup$ lower bound for the half space case. Rest of the $\limsup$ lower bounds will follow from the half space case and ordering \eqref{eq: ordering}.  Note that assuming the constant $\left( \frac 43 \right)$ in the exponent in the lower bound for the right tail, the $\limsup$ lower bound for the point-to-point case was proved in \cite[Theorem 1]{L18}. In this article we also prove the $\left( \frac 43 \right)$ exponent in Theorem \ref{t: passage time tail estimates}, (i).
    \begin{proof}[Proof of the $\limsup$ Lower Bounds] Here in the definition of passage time in the half space model we exclude the initial vertex and keep the final vertex. The proof follows same idea as in Theorem \cite[Theorem 1]{L18}. We fix $\vep>0$ and let $n_k:=\rho^k$, for some $\rho>1$ large enough (to be chosen later depending on $\vep$). Note that, it is enough to prove for any $\vep>0,$
    \[
    \P \left( \frac{T^{\ptohalfspace{}}_{n_k}-4n_k}{g_+(n_k)} \geq \left( \frac 34 \right)^{2/3}-\vep \text{ i.o. } \right)=1.
    \]
    We do it in the following way. First we observe that for any $\delta>0$ there exists sufficiently large $k$ such that by Theorem \ref{t: passage time tail estimates}, (ii)
    \[
    \sum_{k=1}^{\infty}\P\left( \frac{T^{\ptohalfspace{}}_{n_{k-1}}-4n_{k-1}}{g_+(n_{k-1})} \leq - \frac \vep2 \right) \leq \sum_{k=1}^{\infty} e^{-\left(\frac {1}{24}-\delta\right) \left( \frac \vep2 \right)^3 (\log \log n_{k-1})^2}.
    \]
    Clearly, the right hand side series converges. Therefore by Borel-Cantelli lemma we have a.s. for sufficiently large $k,$
    \[
    \frac{T^{\ptohalfspace{}}_{n_{k-1}}-4n_{k-1}}{g_+(n_{k-1})} \geq - \frac \vep2. 
    \]
    Observe that 
    \begin{equation}
    \label{eq: subadditivity for half space}
    T^{\ptohalfspace{}}_{n_k} \geq T^{\ptohalfspace{}}_{n_{k-1}}+T_{\bo{n_{k-1}},\bo{n_k}}^{\ptohalfspace{}}.
    \end{equation}
    Now, consider the following sum. By Theorem \ref{t: passage time tail estimates}, (i) we have 
    \begin{equation}
    \label{eq: BC sum for HS lower bound}
    \sum_{k=1}^{\infty}\P \left( \frac{T_{\bo{n_{k-1}},\bo{n_k}}^{\ptohalfspace{}}-4(n_k-n_{k-1})}{g_+(n_k)} \geq \left( \frac 34\right)^{2/3}-\frac \vep2\right) \geq \sum_{k=1}^{\infty}e^{-\frac 43\left(1+\frac{\vep'}{2}\right)\left(\left( \frac 34\right)^{2/3}-\frac \vep2 \right)^{3/2}\frac{\sqrt{n_k}}{\sqrt{n_{k}-n_{k-1}}} \log \log n_k}.
    \end{equation}
    Now, we choose $\rho>1$ large enough so that
    \[
    \frac{\sqrt{n_k}}{\sqrt{({n_k-n_{k-1})}}}< 1+\frac{\vep'}{2},
    \]
    where we can choose $\vep'$ so that
    \[
    \frac 43 \left(1+\frac{\vep'}{2} \right)^2 < \frac{1}{\left(\left( \frac 34\right)^{2/3} -\frac \vep2\right)^{2/3}}.
    \]
    Note that in \eqref{eq: BC sum for HS lower bound}, the events in the left side are independent (this is because we are considering the passage time without the initial vertex) and the sum in the right hand side diverge by the choice of $\rho$ and $\vep'$. Hence, using Borel-Cantelli lemma we have 
    \[
    \P \left( \frac{T_{\bo{n_{k-1}},\bo{n_k}}^{\ptohalfspace{}}-4(n_k-n_{k-1})}{g_+(n_k)} \geq \left( \frac 34\right)^{2/3}-\frac \vep2 \text{ i.o. }\right)=1.
    \]
    Finally, we get two full probability set $\Omega_1$ and $\Omega_2$ as follows.
    \[
    \Omega_1:=\Big\{ \frac{T^{\ptohalfspace{}}_{n_{k-1}}-4n_{k-1}}{g_+(n_{k-1})} \leq - \frac \vep2 \text{ i.o. }\Big \}^c,
    \]
    \[
    \Omega_2:= \Big\{ \frac{T_{\bo{n_{k-1}},\bo{n_k}}^{\ptohalfspace{}}-4(n_k-n_{k-1})}{g_+(n_k)} \geq \left( \frac 34\right)^{2/3}-\frac \vep2 \text{ i.o. } \Big\}.
    \]
    Observe that on $\Omega_1 \cap \Omega_2$ by \eqref{eq: subadditivity for half space}, we have infinitely often,
    \[
    T^{\ptohalfspace{}}_{n_k}\geq 4n_k+\left( \left(\frac 34 \right)^{2/3}-\vep \right)g_+(n_k).
    \]
       This completes the proof.
    \end{proof}
    
\section{Lim inf for the LILs}
\label{sec: liminf}
\subsection{Lower Bounds} We will prove the $\liminf$ lower bounds in Theorems \ref{t: LILs} in this section. Note that in \cite[Theorem 2]{L18} a lower bound for the $\liminf$ for the point-to-point case was proved, but the constant obtained there is not optimal. We will use Lemma \ref{lemma: infimum over a line for half space} instead of the union bound argument used in \cite[Theorem 2]{L18} to obtain the optimal lower bound in all the four cases.
\paragraph{\textbf{Proof of liminf Lower Bounds:}}
In this proof, $T_n^*$ will denote any of the passage times $T_n^{\ptop{}},T_n^{\linetop{}}, T_n^{\ptoline{}}$,\newline $T_n^{\ptohalfspace{}}$. In  the three cases other than the line-to-point case, we exclude the weight of initial vertex in the definitions of respective passage times. In the definition of the line-to-point passage time, we remove the weight of the final vertex. We fix $\delta>0$. We choose $\varepsilon>0$ small enough (to be specified later). 
We define the sequence $n_k:=[e^{k^{1-\varepsilon}}]$. Consider the following event. 
\begin{displaymath}
    \ci^{\delta}_k:=\Big\{\min_{n_{k-1} < n \leq n_k}\frac{T_n^*-4n}{g_-(n)} \leq -(6\beta)^{1/3}-\delta\Big\},
\end{displaymath}
where $\beta =1,1,2,4$ when $T_n^*=T_n^{\linetop{}},T_n^{\ptoline{}},T_n^{\ptop{}},T_n^{\ptohalfspace{}}$ respectively.
Then by Borel-Cantelli lemma it is enough to show for all $\delta>0$
\begin{equation}
\label{Borel Cantelli for lim inf point to point}
    \sum_{k=0}^{\infty}\mathbb{P}(\ci^{\delta}_k) < \infty.
\end{equation}
It is easy to see that $\ci^{\delta}_k\subseteq \ca_k^* \cup \cb_k^*$ 
where in all the four cases 
\[
\ca_k^*:=\left \{ \frac{T_{n_{k-1}}^*-4n_{k-1}}{g_-(n_{k-1})} \leq -(6\beta)^{1/3}-\frac \delta2\right\}.
\]
and the events $\cb_k^*$ are defined as follows in each of the four cases. Let $v_k:=\Gamma_{n_{k-1}}(2n_{k-1}).$
\begin{itemize}
    \item For point-to-point case: $\cb_k^{\ptop{}}:=\left \{ \inf\limits_{n_{k-1}< n \leq n_k} \frac{T_{\bo{n_{k-1}}, \bo{n}}-4(n-n_{k-1})}{g_-(n_{k-1})} \leq -\frac \delta2\right\}.$
    \item For the line-to-point case: $\cb_k^{\linetop{}}:=\left \{\inf\limits_{n_{k-1}< n \leq n_k} \frac{T_{\bo{n_{k-1}}, \bo{n}}-4(n-n_{k-1})}{g_-(n_{k-1})} \leq -\frac \delta2\right\}.$
    \item For the point-to-line case: $\cb_k^{\ptoline{}}:=\left \{\inf\limits_{n_{k-1}< n \leq n_k} \frac{T_{v_k, \cl_{2n}}-4(n-n_{k-1})}{g_-(n_{k-1})} \leq -\frac \delta2\right\}.$
    \item For the half space case: $\cb_{k}^{\ptohalfspace{}}:=\left \{\inf\limits_{n_{k-1}< n \leq n_k} \frac{T^{\ptohalfspace{}}_{\bo{n_{k-1}}, \bo{n}}-4(n-n_{k-1})}{g_-(n_{k-1})} \leq -\frac \delta2 \right\}.$
\end{itemize}
 Note that although $v_k$ is random, 
 \[
 \P \left( \cb_k^{\ptoline{}}\right)=\P \left(\inf_{n_{k-1}< n \leq n_k} \frac{T_{\bo{n_{k-1}},\cl_{2n}}-4(n-n_{k-1})}{g_-(n_{k-1})} \leq -\frac \delta2 \right).
 \]
For the events $\ca_k^*$, by Theorem \ref{t: passage time tail estimates}, (ii), we have for sufficiently large $k$,  
\begin{align}\label{eq: lefttaillbineq1}
    \P(\ca_k^*)\le e^{-\frac{1}{6 \beta}\left(1-\vep\right)\left( (6 \beta)^{1/3}+\frac\delta2 \right)^3 \log \log n_{k-1}}. 
\end{align}
Now, we can chose $\vep$ small enough so that when summed over $k$, the resulting series on the right hand series converges in all the four cases.
 For the second summand, by the ordering between passage times as in \eqref{eq: ordering}, we have in all the  four cases
 \[
 \P \left(\cb_k^{*} \right) \leq \P \left(\inf_{n_{k-1}< n \leq n_k} \frac{T_{\bo{n_{k-1}}, \bo{n}}^{\ptohalfspace{}}-4(n-n_{k-1})}{g_-(n_{k-1})} \leq -\frac \delta2 \right).
 \]
 For the right hand side we have the following lemma which we will prove in Section \ref{s: LPP estimates}.
 \begin{lemma}
\label{lemma: infimum over a line for half space} There exists $C,c>0$ such that for all sufficiently large $n$ and $\theta>0$ sufficiently large we have
\[
    \P \left(\inf_{1 \leq m \leq n}\{T_m^{\ptohalfspace{}}-4m\} \leq -\theta n^{1/3} \right) \leq Ce^{-c \theta}.
\]
\end{lemma}
It is natural to expect that the exponent can be improved to $\theta^3$, and indeed such an estimate can be proved for the point-to-point passage times. See Remark~\ref{rem:ptptcubeexponent}. 

    By the above lemma we have in all four cases
    \begin{equation}
    \label{convergent series for the second term}
    {\sum\limits_{k=1}^{\infty}}
    \P \left( \cb_k^*\right)\leq {\sum\limits_{k=1}^{\infty}}Ce^{-c\frac{\delta}{2}\left(\frac{n_{k-1}}{n_k-n_{k-1}}\right)^{1/3}}.
    \end{equation}
    Further, we have for sufficiently large $k$ there exists a constant $c>0$ such that 
    \begin{equation}
    \label{eq: ratio}
    \frac{e^{(k-1)^{1-\varepsilon}}}{e^{k^{1-\varepsilon}}-e^{(k-1)^{1-\varepsilon}}} \geq ck^{\varepsilon}.
    \end{equation}
    By the above inequality, {the right hand side of \eqref{convergent series for the second term} converges.
Therefore, we see that in all the four cases 
\[
\sum_{k=1}^{\infty}\P(\ci_k^{\delta}) \leq 
\sum_{k=1}^{\infty}\P\left(\ca_k^* \right)+\sum_{k=1}^{\infty}\P(\cb_k^*) < \infty.
\]
This completes the proof for the $\liminf$ lower bounds.
\qed
\subsection{Upper Bounds} In this subsection we will prove upper bounds for the $\liminf$s. As mentioned before note that an upper bound for the $\liminf$ for the point-to-point case was proved in \cite[Proposition 2.2]{BGHK21}. But the constant obtained was not optimal. Here we obtain the correct constant.\\
\paragraph{\textbf{Proof of liminf Upper Bound in the Point-to-Point Case:}} The proof follows same idea as in the proof of \cite[Proposition 2.2]{BGHK21}. The difference will be that unlike the point-to-line passage time estimates used in the proof of \cite[Proposition 2.2]{BGHK21}, we will use point-to-point passage time estimates using a transversal fluctuation argument. This will prove the optimal constant. In this proof in the definition of the point-to-point passage time initial vertex will be removed. We first fix $\varepsilon>0$ small. We define $n_k:=[(k!)^{(1-\varepsilon)^3}]$. We will show that there exists a positive probability set with probability $> \gamma$ for some $\gamma>0$, such that on this event for all $m$ sufficiently large there exists $\frac m2 \leq k \leq m$ such that 
    \[
    \frac{T_{n_k}^{\ptop{}}-4n_k}{g_-(n_k)} \leq -(12)^{1/3}.
    \]
    \begin{figure}[t!]
    \includegraphics[width=13 cm]{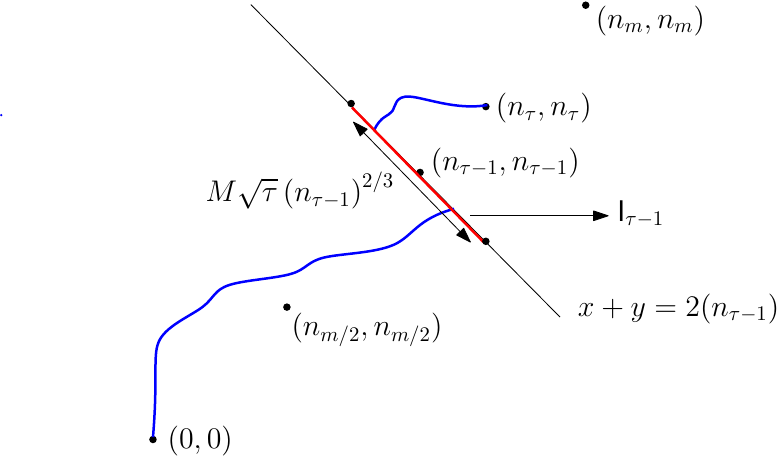}
    \caption{To prove $\liminf$ upper bound for point-to-point passage time we consider the sequence $(n_k)_{k \geq 1}$ as defined in the proof. $\mathsf{TF}$ is the event that all the geodesics $\Gamma_k$ from $\bo{0}$ to $\bo{n_k}$ does not go outside of the interval $\mathsf{I_{k-1}}$. We use \cite[Proposition 2.1]{BBB23} to ensure that we can choose $M$ sufficiently large to make $\P\left (\mathsf{TF}^c \right)$ sufficiently small. Further, $\cm$ is the event that between $\frac m2$ and $m$ there is a $\tau$ such that the passage time from $I_{\tau-1}$ to $\bo{n_\tau}$ is smaller than $4(n_\tau-n_{\tau-1})-(12)^{1/3}g_-(n_\tau)$. The above figure shows a geodesic between $\mathsf{I}_{\tau-1}$ to $\bo{n_\tau}$. Using independence and Lemma \ref{typical interval to point passage time} we get that $\P(\cm^c)$ can be made sufficiently small for large $m$. Finally, $\cn_k$ is the event that the passage time from $\bo{0}$ to $\cl_{2n_{k-1}}$ has passage time smaller than $4n_{k-1}$. The point-to-line geodesic from $\bo{0}$ to $\cl_{2n_{k-1}}$ is shown in the figure. Using weak convergence for each fixed $k, \cn_k$ has positive probability. Finally, on the event $\mathsf{TF} \cap \cm \cap \cn_\tau $ we get $T_{\bo{0},n_{\tau}} \leq 4n_{n_\tau}-(12)^{1/3}g_-(n_\tau)$. We use conditioning on $\tau$ to find a positive lower bound for the probability $\mathsf{TF} \cap \cm \cap \cn_\tau $.  } 
    \label{fig: liminf upper bound point to point} 
\end{figure}
By Kolmogorov 0-1 law this will prove the theorem. We fix $M$ large enough (to be chosen later). First we consider the following event.
\[
\mathsf{TF}:=\left \{ \text{ for all } k \geq 1,  \left |\psi \left (\Gamma_{n_{k}}^{\ptop{}}(n_{k-1}) \right) \right| \leq M\sqrt{k}(n_{k-1})^{2/3} \right \}.
\]
Using \cite[Proposition 2.1]{BBB23} we have there exists $a>0$ such that 
\begin{equation}
\label{estimate for tranversal fluctuation event}
\P (\mathsf{TF}^c) \leq \sum_{k=1}^{\infty}e^{-aM^3k^{3/2}}.
\end{equation}
Now, we consider the following events. For $k \geq 1,$ let $\mathsf{I}_{k-1}$ denote the interval $\{v \in \cl_{2n_{k-1}}: |\psi(v)| \leq M \sqrt{k}(n_{k-1})^{2/3} \}$ (see Figure \ref{fig: liminf upper bound point to point}). For $k \geq 1$ we define 
\[
\cm_k:= \left \{ T_{\mathsf{I_{k-1}}, \bo{n_k}}-4(n_k-n_{k-1}) \leq -(12)^{1/3}g_-(n_k)\right \},
\]
where 
\[
T_{\mathsf{I}_{k-1}, \bo{n_k}}:=\max_{u \in \mathsf{I}_{k-1}}T_{u, \bo{n_k}}.
\]
Further we consider the events $\cn_k$ defined by (see Figure \ref{fig: liminf upper bound point to point}) 
\[
\cn_k:=\{T_{\bo{0}, \mathsf{I}_{k-1}} \leq 4n_{k-1} \},
\] 
where similarly, 
\[
T_{\bo{0}, \mathsf{I}_{k-1}}:=\max_{u \in \mathsf{I}_{k-1}}T_{\bo{0},u}.
\]
Consider the event $\cm:=\bigcup_{k=\frac m2}^m \cm_k.$ We fix a small $\gamma$ (to be chosen later). We want to show that for sufficiently large $m, \P(\cm) \geq 1-\frac \gamma 4.$ Clearly, $\cm_k$ are independent events. First we fix $M$. Using \eqref{estimate for tranversal fluctuation event} we can chose $M$ so that 
\[
    \P ( \mathsf{TF}) \geq 1 -\frac \gamma4.
\]
To estimate the probability of the events $\cm_k,$ we apply the following lemma. The lemma says that for small enough (at the scale of $n^{2/3}$) interval, the interval-to-point passage time tail has the same exponent (upto $\vep$ error) as the point-to-point passage time. We will prove the following lemma in Section \ref{s: LPP estimates}.\\ 
First we fix some notations. We consider the parallelogram $U_\delta$ whose opposite pair of sides lie on $\cl_0$ and \ $\cl_{2n}$ with midpoints $\bo{0}$ and $\bo{n}$ respectively and each side have length $\delta n^{2/3}$. Further, $U_\delta \cap \cl_0$ (resp.\ $U_\delta \cap \cl_{2n}$) are denoted by $U_\delta^0$ (resp.\ $U_\delta^n$). 
\begin{lemma}
\label{typical interval to point passage time}
    For any $\vep>0$ small enough, there exists $\delta>0, C>0$ such that for $n$ and $x$ sufficiently large and $x \ll n^{1/9}$
    \[
    \P \left( \sup_{u \in U_\delta^0, v \in U_\delta^n} T_{u,v} \leq 4n -
    x 2^{4/3}n^{1/3}\right) \geq e^{-(\frac{1}{12}+\vep)x^3}.
    \]
\end{lemma}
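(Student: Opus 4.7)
The plan is to reduce the supremum to a single point-to-point left-tail event via a two-event decomposition. Writing $M := 4n - x 2^{4/3} n^{1/3}$ and fixing a small parameter $\eta_0 > 0$ to be chosen, I would use the inclusion
\[
\{T_{\bo 0,\bo n} \le M - \eta_0 \cdot 2^{4/3} n^{1/3}\} \cap \{\sup_{u,v} T_{u,v} - T_{\bo 0, \bo n} \le \eta_0 \cdot 2^{4/3} n^{1/3}\} \subset \{\sup_{u,v} T_{u,v} \le M\},
\]
which gives $\P(\sup T_{u,v} \le M) \ge \P(A) - \P(B^c)$ where $A$ and $B$ are the two events above. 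Note that the naive monotone domination $\sup T_{u,v} \le T_{u_\ell,v_r}$ with $u_\ell = -\delta n^{2/3}\bo 1,\ v_r = (n+\delta n^{2/3})\bo 1$ on the diagonal fails here: since $T_{u_\ell, v_r}$ has mean $\approx 4n + 8\delta n^{2/3}$, the event $\{T_{u_\ell,v_r}\le M\}$ would force a deficit of order $\delta n^{2/3}$, landing in the large-deviation regime $e^{-c\delta^3 n}$, far below the target $e^{-(1/12+\vep)x^3}$. Hence the two-event decomposition is essential.

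For event $A$, Theorem~\ref{t: passage time tail estimates}(ii) yields $\P(A) \ge \exp(-(\tfrac{1}{12}+\tfrac{\vep}{3})(x+\eta_0)^3)$. I would choose $\eta_0 = c_\vep\, x$ with $c_\vep$ small enough that $(1+c_\vep)^3(\tfrac{1}{12}+\tfrac{\vep}{3}) \le \tfrac{1}{12}+\tfrac{\vep}{2}$, so that $\P(A) \ge 2\exp(-(\tfrac{1}{12}+\tfrac{\vep}{2})x^3)$ for $n,x$ large. Note that this linear-in-$x$ choice of $\eta_0$ is the key trick: a constant $\eta_0$ is too small to absorb, while $\eta_0 \sim x^{3/2}$ would be too large to match the target left-tail.

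The hard part is estimating $\P(B^c) = \P(\sup_{u,v} T_{u,v} - T_{\bo 0,\bo n} > c_\vep x\cdot 2^{4/3} n^{1/3})$. Heuristically, by the KPZ scaling, the typical size of $\sup_{u,v} T_{u,v} - T_{\bo 0, \bo n}$ is of order $\sqrt{\delta}\,n^{1/3}$ (reflecting the locally-Brownian nature of the Airy sheet across a $\delta$-window), and its right tail beyond this scale decays as $\exp(-c_\delta s^{3/2})$ at scale $s\cdot n^{1/3}$ (reflecting the KPZ right tail with exponent $3/2$). Plugging in $s = c_\vep x$ yields
\[
\P(B^c) \le C\exp\bigl(-c_\delta (c_\vep x)^{3/2}\bigr),
\]
which is dominated by $\tfrac12\exp(-(\tfrac{1}{12}+\tfrac{\vep}{2})x^3)$ provided $x \ge x_0(\delta,\vep)$, since $x^{3/2} \ll x^3$ as $x\to\infty$. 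Then $\P(A)-\P(B^c) \ge \exp(-(\tfrac{1}{12}+\vep)x^3)$ for $x$ sufficiently large.

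The principal obstacle is therefore establishing the quantitative right-tail bound for the local supremum $\sup_{u,v}T_{u,v}-T_{\bo 0,\bo n}$ at scale $s n^{1/3}$ with $3/2$-exponent, uniformly in the moderate-deviation range $x \ll n^{1/9}$, for a fixed (small) $\delta$. This amounts to a ``local concentration'' estimate for passage times in a $\delta n^{2/3}$-parallelogram, which I would obtain by combining parallelogram estimates from \cite{BGZ21}, transversal fluctuation estimates from \cite{BBB23}, and the interval-to-point / one-point upper-tail bounds from \cite{BBF22, LR10}. Concretely, one can split the deviation $\sup T_{u,v}-T_{\bo 0,\bo n}$ into a supremum over $u$ with $v=\bo n$ fixed and a symmetric supremum over $v$, and for each, use the comparison $T_{u,\bo n}\le T_{u_\ell,\bo n}$ together with upper-tail estimates for the shorter passage time $T_{u_\ell, u}$ to relate the sup to a single passage time plus a correction, yielding the desired $3/2$-tail bound once the parameters are tuned.
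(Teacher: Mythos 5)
Your proposal takes a genuinely different route from the paper, but it contains a fatal arithmetic error that the approach cannot survive.

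The paper does not compare $\sup_{u,v} T_{u,v}$ to $T_{\bo 0,\bo n}$. Instead it anchors at two auxiliary points $\bo{-\mu n}$ and $\bo{(1+\mu)n}$ at macroscopic distance $\mu n$, where $\delta = \mu^{2/3}$ and $\mu$ is eventually taken small. Super-additivity gives $T_{u,v} \leq T_{\bo{-\mu n},\bo{(1+\mu)n}} - T_{\bo{-\mu n},u} - T_{v,\bo{(1+\mu)n}}$, so the desired event is implied by the intersection of one central left-tail event $\ct$ for $T_{\bo{-\mu n},\bo{(1+\mu)n}}$ (which supplies the main probability, via Theorem \ref{t: passage time tail estimates}(ii)) and two side events $\cs_1,\cs_2$ asserting that the short passage times to/from the intervals are not too far \emph{below} their means. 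The point is that $\cs_1^c,\cs_2^c$ are \emph{lower-tail} events for passage times over distance $\mu n$, so the deviation $\tfrac{\vep}{4}xn^{1/3}$ is $\asymp \vep x/\mu^{1/3}$ in natural units, giving $\P(\cs_i^c) \leq e^{-c\vep^3 x^3/\mu}$ by the parallelogram estimates of \cite{BGZ21}. Crucially, this has a cubic exponent with an extra factor $1/\mu$, so choosing $\mu$ small makes $\P(\cs_i^c)$ negligible compared to the target $e^{-(\tfrac{1}{12}+\vep)x^3}$.

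Your approach anchors at $\bo 0$ (distance $0$), and the corresponding ``bad'' event $B^c = \{\sup_{u,v}T_{u,v}-T_{\bo 0,\bo n} > \eta_0 2^{4/3}n^{1/3}\}$ is controlled (as you correctly surmise) by \emph{upper}-tail estimates, which decay with exponent $3/2$, not $3$. You then claim ``$C\exp(-c_\delta(c_\vep x)^{3/2})$ is dominated by $\tfrac12\exp(-(\tfrac{1}{12}+\tfrac{\vep}{2})x^3)$ ... since $x^{3/2}\ll x^3$.'' This is backwards: $x^{3/2}\ll x^3$ means $e^{-cx^{3/2}} \gg e^{-c'x^3}$ for large $x$, so $\P(B^c)$ is actually \emph{much larger} than your lower bound for $\P(A)$, and $\P(A)-\P(B^c)$ is negative for $x$ large. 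The $3/2$-exponent on $\P(B^c)$ is not an artifact of crude estimation: the right tail of an interval-to-point passage time really has exponent $3/2$, and there is no room to improve this to $3$. No choice of $\eta_0$ rescues the plan, since $\eta_0\ll x$ ruins $\P(A)$ and $\eta_0\gtrsim x$ leaves $\P(B^c)$ at a $3/2$-exponent scale.

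The moral distinction is that to subtract off a correction term from a probability of size $e^{-cx^3}$, the correction must itself be $o(e^{-cx^3})$. Lower-tail events can supply that (their exponent is cubic and can be boosted by $1/\mu$); upper-tail events of the type you invoke cannot. Replacing the anchor $\bo 0$ by a point far outside the region, so that only lower-tail estimates are needed for the side events, is precisely the device the paper uses.
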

We chose $\delta$ corresponding to $ \frac{\varepsilon}{12}$ as in Lemma \ref{typical interval to point passage time}. Now, we chose $k$ large enough so that 
\[
    M \sqrt{k}(n_{k-1})^{2/3} \leq \delta (n_k-n_{k-1})^{2/3}.
\]
Then using Lemma \ref{typical interval to point passage time} we have 
\[
\P ( \cm_k ) \geq e^{-(1+\vep)\frac{ n_k \log \log n_k}{n_k-n_{k-1}}}.
\]
Further, for sufficiently large $k,$ we have 
\[
\frac{n_k}{n_{k}-n_{{k-1}}}<1+\varepsilon.
\]
So, for sufficiently large $k$ we have 
\[
\P (\cm_k) \geq e^{-(1+\varepsilon)^2 \log \log n_k}.
\]
Now, we observe that we have
\[
\lim_{k \rightarrow \infty} \frac{\log \log k!}{\log k}=1.
\]
Hence, we get for sufficiently large $k$
\[
\log \log k! <\frac{\log k}{1-\varepsilon}.
\]
Combining the above we get 
\[
\P (\cm_k) \geq e^{-(1-\varepsilon^2)^2 \log k}.
\]
Now, as $\cm_k$ are independent we have 
\[
\P \left (\bigcap_{k=\frac m2}^m \cm_k^c \right) \leq \prod_{k=\frac m2}^m\left (1-\frac{1}{k^{(1-\varepsilon^2)^2}} \right ).
\]
Clearly, the right hand side goes to zero as $m$ goes to infinity. So, we can choose $m$ large enough so that 
\[
\P \left( \bigcup_{k=\frac m2}^m \cm_k\right) \geq 1-\frac{\gamma}{4}.
\]
We define the following random variable.
\[
\tau:=\max \left \{ \frac m2 \leq k \leq m: \cm_k \text{ occurs } \right \},
\]
where we define $\tau$ to be $-\infty$ if the above set is empty. We have on $\cm, \tau$ is finite. Further, we define 
\[
\cn_\tau:=\{T_{\bo{0}, \mathsf{I}_{\tau-1}} \leq 4 n_{\tau-1}\}.
\]
For $\frac m2 \leq k \leq m,$ let $\cg_k:=\sigma \{ \tau_v: \phi(v) > n_{k-1}\}.$
We wish to calculate $\P \left ( \mathsf{TF} \cap \cm \cap \cn_{\tau} \right).$ Note that for all $\frac m2 \leq k \leq m$, 
$\{\tau = k\} \in \cg_k.$ Now, on the event $\{\tau=k\},$ we have 
\[
\P (\cn_\tau \vert \tau=k)=\P( \cn_k \vert \tau=k)=\P( \cn_k),
\]
as $\cn_k$ is independent of $\cg_k.$ Also, note that 
\[
\cn_k \supset \{ T_{n_{k-1}}^{\ptoline{}} < 4n_{k-1}\}.
\]
Now, using the weak convergence of point-to-line passage time, we conclude there exists $c>0$ such that for all sufficiently large $k$
\[
\P( \cn_k) \geq c.
\]
We fix $\gamma:=c.$ Now, consider the event $\mathsf{TF} \cap \cm \cap \cn_{\tau}.$ We have for all $\frac m2 \leq k \leq m,$
\[
\P(\mathsf{TF} \cap \cm \cap \cn_{\tau}\vert \tau=k) \geq \P(\mathsf{TF} \cap \cm \vert \tau=k)+\P(\cn_k)-1\geq \P(\mathsf{TF} \cap \cm \vert \tau=k)+\gamma-1.
\]
Combining all the above we get 
\begin{align*}
\P(\mathsf{TF} \cap \cm \cap \cn_{\tau})&=\sum_{k=\frac m2}^m\P \left(\mathsf{TF} \cap \cm \vert \tau=k \right)\P(\tau=k)\\ 
&\geq \sum_{k=\frac m2}^m\P(\mathsf{TF} \cap \cm \vert \tau=k )\P ( \tau=k)+\left(\gamma-1\right)(1-\P\left(\tau=-\infty \right)) \geq \frac \gamma4,
\end{align*}
where the last inequality comes from the fact that $\P\left(\tau=-\infty \right) \leq \P(\cm^c)$ and we can choose $m$ large enough so that $\P(\cm^c)$ is small.
Further, observe that on $\mathsf{TF} \cap \cm \cap \cn_{\tau},$ we have (see Figure \ref{fig: liminf upper bound point to point})
\[
T_{n_{\tau}}^{\ptop{}} \leq T_{\bo{0}, \mathsf{I}_{\tau-1}}+T_{\mathsf{I}_{\tau-1}, \bo{n_\tau}} \leq 4n_{\tau-1}+4(n_{\tau}-n_{\tau-1})-(12)^{1/3}g_-(n_\tau).
\]
So, for $n$ sufficiently large we define the event 
\[
\mathcal{R}_{m}:= \bigcup_{k=\frac m2}^{m}\left \{\frac{T_{n_k}^{\ptop{}}-4n_k}{g_-(n_k)} \leq -(12)^{1/3} \right \}.
\]
Then we have for sufficiently large $m$
\[
\P(\mathcal{R}_m) \geq \frac \gamma4.
\]
Now,
\begin{align*}
&\P \left (\left\{ \liminf_{n \rightarrow \infty}\frac{T_n^{\ptop{}}-4n}{g_-(n)} \leq -(12)^{1/3}\right\} \right)\\
&=\P \left (\bigcap_{j=0}^{\infty} \bigcup_{n=j}^{\infty}\left \{\frac{T_n^{\ptop{}}-4n}{g_-(n)} \leq -(12)^{1/3}\right \} \right)\\
&=\lim_{j \rightarrow \infty}\P \left( \bigcup_{n=j}^{\infty}\left \{\frac{T_n^{\ptop{}}-4n}{g_-(n)} \leq -(12)^{1/3}\right \}\right)\\
&\geq \liminf_{k \rightarrow \infty} \mathcal{R}_k \geq \frac \gamma4.
\end{align*}
This completes the proof. \qed
\begin{figure}[t!]
    \includegraphics[width=13 cm]{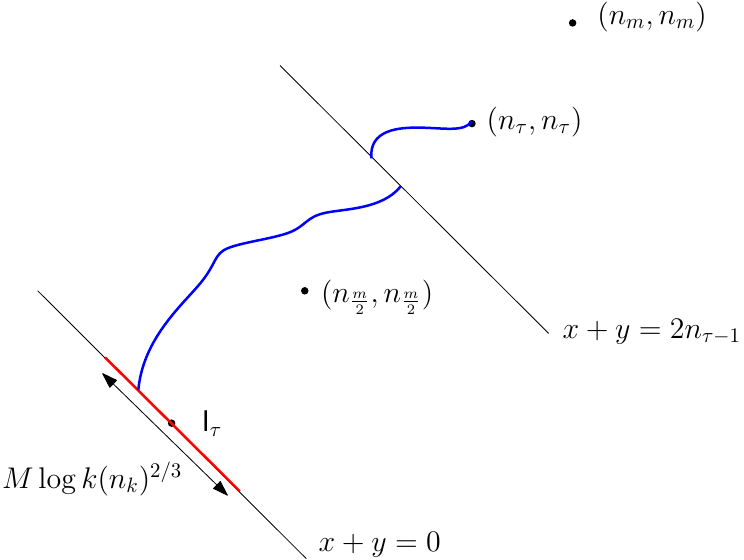}
    \caption{To prove $\liminf$ upper bound for the line-to-point case we first consider the sequence $(n_k)_{k \geq 1}$ as defined in the proof. $\mathsf{TF}$ is the event that all geodesics from $\cl_{0}$ to $\bo{n_k}$ intersect $\cl_0$ in $\mathsf{I}_k$. By \cite[Theorem 2.3]{BBF22} $\P(\mathsf{TF}^c)$ is arbitrarily small. The event $\cm$ ensures that there exists $\frac m2 \leq \tau \leq m$ such that $T_{\cl_{2n_{\tau-1}},\bo{n_\tau}} \leq 4(n_\tau-n_{\tau-1})-(6)^{1/3}g_-(n_\tau).$ By using independence and Theorem \ref{t: passage time tail estimates}, (ii) we can chose $m$ sufficiently large so that $\P(\cm^c)$ is arbitrarily small. Finally the event $\cn_k$ ensures that for all $k$ the passage time between $\mathsf{I}_k$ to $\cl_{2n_{k-1}}$ is not too large from $4n_{k-1}$. By Theorem \cite[Proposition A.5]{BBF22} we see that $\P(\cn_k)$ is small. Now, on $\mathsf{TF}\cap \cm \cap \cn_\tau$ we can choose $m$ large enough so that the line-to-point passage time is less than  $-(6^{1/3}-\vep)g_-(n_\tau).$ Finally a conditioning argument as used in the proof of $\liminf$ upper bound point-to-point case we get the a positive lower bound for the above event. } 
    \label{fig: liminf line to point }
\end{figure}
\paragraph{\textbf{Proof of liminf Upper Bound in the Line-to-Point Case:}}
    In the definition of line-to-point passage time and point-to-point passage time in this proof we will exclude the final vertex. Let $\rho>1$ and $M$ be large enough (to be chosen later). Let $n_k:= \rho^k$ for all $k \geq 1.$ Let $\varepsilon>0$ be any fixed positive number. As before we will show that there exists a set with probability at least $\gamma$ for some $\gamma>0,$ such that on this event for all $m \in \N$ sufficiently large there exists $ \frac m2 \leq k \leq m$ such that 
    \[
    \frac{T_{n_k}^{\linetop{}}-4n_k}{g_-(n_k)} \leq -(6)^{1/3}+\varepsilon.
    \]
    Showing this for all $\varepsilon>0$ and applying Kolmogorov 0-1 law will again conclude the theorem. For $k \geq 1,$ we define the following events. 
    \[
    \mathsf{TF}_k:=\left \{ \left |\psi \left ({\Gamma^{\linetop{}}_{n_k}}(0)\right) \right| \leq M \log kn_k^{2/3} \right\}.
    \]
    By \cite[Theorem 2.3]{BBF22} we have there exists $a>0$ such that 
    \[
    \P \left(\mathsf{TF}_k^c \right) \leq e^{a (M \log k)^2-\frac 43 ( M \log k)^3}.
    \]
    We define 
    \[
    \mathsf{TF}:= \bigcap_{k=1}^{\infty} \mathsf{TF}_k.
    \]
    Then we have 
    \[
    \P \left(\mathsf{TF} \right) \geq 1- \sum_{k=1}^{\infty}e^{a (M \log k)^2-\frac 43 ( M \log k)^3}.
    \]
    Hence, there exists $C>0$ (independent of $M$) such that 
    \begin{equation}
    \label{line to point transversal fluctuation happens with large probability}
    \P \left(\mathsf{TF} \right) \geq 1-Ce^{-\frac 43 M^3}.
    \end{equation}
    Now, for $k \geq 1,$ we define the following events. 
    \[
    \cm_k:=\left \{ T_{\cl_{2n_{k-1}},\bo{n_k}}-4(n_k-n_{k-1}) \leq -\left(6^{1/3}-\frac{\varepsilon}{2}\right)g_-(n_k)\right\},
    \]
    where
    \[
    T_{\cl_{2n_{k-1}},\bo{n_k}}:=\max_{u \in \cl_{2n_{k-1}}}T_{u, \bo{n_k}}.
    \]
    Now, we have by Theorem \ref{t: passage time tail estimates}, (ii)
    \[
    \P( \cm_k) \geq e^{-\frac 16(1+\vep') \left (6^{1/3}-\frac{\vep}{2} \right)^3\frac{n_k \log \log n_k}{n_k-n_{k-1}} }.
    \]
    Let 
    \[
    \vep':= 1- \sqrt{\frac{(6^{1/3}-\frac{\vep}{2})^3}{6}}.
    \]
    Now, we chose $\rho>1$ large enough so that 
    \[
    \frac{\rho}{\rho-1} < 1+\vep'.
    \]
    We get 
    \[
    \P( \cm_k) \geq e^{-(1-\vep'^2)^2\log \log n_k}.
    \]
    Let 
    \[
    \cm:=\bigcup_{k=\frac m2}^m \cm_k.
    \]
    As $\cm_k$ are independent we have 
    \[
    \P \left( \cm^c \right) \leq \prod_{k= \frac m2}^m \left( 1- \frac{e^{-(1-\vep'^2)^2\log \log \rho}}{k^{(1- \vep'^2)^2}}\right).
    \]
    Clearly, as $m$ goes to infinity, right hand side goes to 0. So, we can choose $m$ large enough so that 
    \[
    \P \left( \cm^c \right) \leq Ce^{-\frac 43 M^3},
    \]
    where $C$ is as in \eqref{line to point transversal fluctuation happens with large probability}.
    Let us define the following interval (see Figure \ref{fig: liminf line to point }). 
    \[
    \mathsf{I}_k:=\big \{ v \in \cl_0: |\psi(v)| \leq M n_k^{2/3}\log k  \}.
    \]
    For $k \geq 1,$ we define 
    \[
    \cn_k:=\big \{ T_{\mathsf{I}_k, \cl_{2n_{k-1}}} \leq 4n_{k-1} +M 2^{4/3}(\log k)^{\frac{1}{6}}n_{k}^{1/3}\big\},
    \]
    where
    \[
    T_{\mathsf{I}_k}, \cl_{2n_{k-1}}:=\max_{\substack{u \in \mathsf{I}_k, \\ v \in \cl_{2n_{k-1}}}}T_{u,v}.
    \]
    We wish to estimate $\P \left(\cn_k \right)$. For $1 \leq i \leq M\rho^{2/3} \log k$ we divide $\msf{I}_k$ into intervals of length $2n_{k-1}^{2/3}$ and denote them by $\msf{I}_k^i.$ 
    We define 
    \[ \cn_k^i:=\big\{  T_{\mathsf{I}_k^i, \cl_{2n_{k-1}}} \leq 4n_{k-1} +M 2^{4/3}(\log k)^{\frac{1}{6}}n_{k}^{1/3}\big \}.
    \]
    \[
    \left(\cn_k\right)^c=\bigcup_{i=1}^{M\rho^{2/3} \log k }\left(\cn_k^i\right)^c.
    \]
    For each $i$ as above we have by \cite[Proposition A.5]{BBF22} there exists $C>0$ such that 
    \[
    \P \left( \left( \cn_k^i\right)^c\right) \leq C (\log k)^{1/6}e^{-\frac 43 M^{3/2} \rho^{1/2}(\log k)^{1/4}}.
    \]
    Then by a union bound we have 
    \[
    \P \left( \left( \cn_k\right)^c\right) \leq C M\left(\log k\right)\rho^{2/3}  (\log k)^{1/6}e^{-\frac 43 M^{3/2} \rho^{1/2}(\log k)^{1/4}}.
    \]
    Hence, we have for sufficiently large $k,$
    \[
    \P \left( \left( \cn_k\right)^c\right) \leq \frac{Me^{-\frac 43 M^{3/2}}}{2}.
    \]
    So, we have for sufficiently large $k$
    \[
    \P \left ( \cn_k \right) \geq 1-\frac{Me^{-\frac 43 M^{3/2}}}{2}.
    \]
    Same as before, we now define the following random variable.
    \[
    \tau:=\max \left \{ \frac m2 \leq k \leq m: \cm_k \text{ occurs }\right \},
    \]
where we define $\tau$ to be $-\infty$ if the above set is empty. We have on $\cm, \tau$ is finite. Further, we define

\[
\cn_{\tau}:=\big \{ T_{\mathsf{I}_{\tau}, \cl_{2n_{\tau-1}}} \leq 4n_{\tau-1} +M 2^{4/3}(\log \tau)^{\frac{1}{6}}n_{\tau} ^{1/3}\big\}.
\]
We now, consider the event $\msf{TF} \cap \cm  \cap \cn_{\tau}$. Observe that on this event, $\tau$ is finite and we have 
\[
T^{\linetop{}}_{n_{\tau}} \leq T_{\mathsf{I}_{\tau}, \cl_{2n_{\tau}-1}}+T_{2\cl_{n_{\tau}-1}, \bo{n_{\tau}}} \leq 4n_{\tau} +M2^{4/3} (\log \tau)^{\frac{1}{6}}n_{\tau} ^{1/3}-\left (6^{1/3}-\frac \vep2 \right)g_-(n_\tau).
\]
Finally, observe that depending on $M,$ there exists $m_0$ such that for all $m \geq m_0,$ we have 
\[
T_{n_{\tau}} \leq 4n_{\tau}- \left (6^{1/3}-\varepsilon\right)g_-(n_\tau).
\]
Rest of the proof is similar once we show that for sufficiently large $k,$
\[
\P \left( \msf{TF} \cap \cm \cap \cn_\tau\right) \geq 1-2Ce^{-\frac 43 M^3}-\frac{Me^{-4/3M^{3/2}}}{2} \left(1-Ce^{-\frac 43M^3} \right).
\]
But this follows by same argument using independence and conditioning on $\tau$ as done in the previous case. This completes the proof. \qed
\paragraph{\textbf{Proof of Upper Bound of the liminf for Point-to-Line Case:}}
The proof is exactly similar to the point-to-point case using the following two lemmas. The first lemma is about local transversal fluctuation of point-to-line geodesic, whose point-to-point version was proved in \cite[Proposition 2.1]{BBB23}.
\begin{lemma}
    \label{local transversal fluctuation for point to line geodesic}
     For all $0 <r \le 2n$ and $1< u \ll n^{1/14}$ sufficiently large there exists $C,c_1,c_2>0$ such that 
    \[
    \P\left (\left |\psi\left(\Gamma^{\ptoline{}}_{n}(r)\right) \right| \geq u r^{2/3} \right) \le Ce^{c_1u^2-c_2 u^3}.
    \]
\end{lemma}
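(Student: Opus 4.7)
The plan is to follow a multi-scale chaining argument, adapted from \cite[Proposition 2.1]{BBB23} (which gives the analogous statement for point-to-point geodesics) to the point-to-line setting. By symmetry across the diagonal $x = y$ it suffices to bound $\P(\psi(\Gamma_n^{\ptoline{}}(r)) \geq u r^{2/3})$. I would first dispose of the regime $r \geq n/2$: there $u r^{2/3}$ is comparable to $u n^{2/3}$, and since the endpoint of $\Gamma_n^{\ptoline{}}$ on $\cl_{2n}$ sandwiches the intermediate position $\Gamma_n^{\ptoline{}}(r)$ (up to a local transversal fluctuation at scale $n$), the bound follows directly from the endpoint transversal fluctuation estimate \cite[Theorem 2.3]{BBF22}. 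So in what follows I assume $r < n/2$.

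Fix $\alpha \in (1, \sqrt{2})$ and let $j_0$ be the largest integer with $2^{j_0} r \leq 2n$. For $0 \leq j \leq j_0 - 1$ I would introduce the cascade of events
\[
\mathcal{A}_j := \left\{\psi\!\left(\Gamma_n^{\ptoline{}}(2^j r)\right) \geq u\bigl((2\alpha)^j 2r\bigr)^{2/3}\right\} \cap \left\{\psi\!\left(\Gamma_n^{\ptoline{}}(2^{j+1} r)\right) \leq u\bigl((2\alpha)^{j+1} 2r\bigr)^{2/3}\right\}
\]
and set $\mathcal{A} = \bigcup_j \mathcal{A}_j$. On the event $\mathcal{A}^c \cap \{\psi(\Gamma_n^{\ptoline{}}(r)) \geq u r^{2/3}\}$, a telescoping argument shows that $\psi(\Gamma_n^{\ptoline{}}(2^{j_0} r)) \geq u((2\alpha)^{j_0} 2r)^{2/3}$, i.e.\ the geodesic has transversal fluctuation of order $u\alpha^{2j_0/3} n^{2/3}$ at a scale comparable to $n$, and this probability is controlled by \cite[Theorem 2.3]{BBF22} (after combining with a local fluctuation estimate to pass from $\cl_{2^{j_0} r}$ to the true endpoint on $\cl_{2n}$).

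The main combinatorial step is to estimate each $\P(\mathcal{A}_j)$. For this I would fix the deterministic point $w_j \in \cl_{2^{j+1} r}$ with $\psi(w_j) = u((2\alpha)^{j+1} 2r)^{2/3}$ and let $\Gamma_j$ be the point-to-point geodesic from $\bo{0}$ to $w_j$. On $\mathcal{A}_j$, the endpoint of $\Gamma_n^{\ptoline{}}$ on $\cl_{2n}$ lies to the left of the straight line extension from $\bo{0}$ through $w_j$, so by planarity $\Gamma_j$ must pass to the right of $\Gamma_n^{\ptoline{}}$ at scale $\cl_{2^j r}$, forcing $\psi(\Gamma_j(2^j r)) \leq \psi(w_j')$, where $w_j'$ is the intersection with $\cl_{2^j r}$ of the straight segment from $\bo 0$ to $w_j$. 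A short computation gives $\psi(w_j) - \psi(w_j') \asymp u \alpha^{2j/3} (2r)^{2/3} (1 - \alpha^{2/3}/2^{1/3})$, which is a transversal fluctuation event for the point-to-point geodesic $\Gamma_j$ and hence, by \cite[Proposition C.8]{BGZ21}, has probability at most $C e^{-c u^3 \alpha^{2j}}$ for large enough $u$.

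Summing the geometric series $\sum_j C e^{-c u^3 \alpha^{2j}} \leq C' e^{-c u^3}$ and combining with the $\mathcal{A}^c$ contribution yields the desired bound of the form $C e^{c_1 u^2 - c_2 u^3}$. The main technical obstacle will be the bookkeeping of constants: one has to choose $\alpha$ carefully (strictly less than $\sqrt{2}$ but close enough) so that the geometric factor $\alpha^{2j}$ in the exponent $-c u^3 \alpha^{2j}$ grows fast enough to dominate any polynomial loss from the union bound, and simultaneously the factor $(2\alpha)^{2/3}$ in the cascade does not outrun the KPZ transversal scaling. Otherwise the argument is largely parallel to \cite[Proposition 2.1]{BBB23}, with planarity and the point-to-point transversal fluctuation estimate of \cite[Proposition C.8]{BGZ21} providing the core input.
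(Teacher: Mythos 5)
You take a genuinely different --- and considerably longer --- route than the paper. The paper's proof is a single planarity reduction: it restricts to the event $\mathsf{TF}_n := \{0 \leq \psi(\Gamma_n^{\ptoline{}}(2n)) \leq u(2n)^{2/3}\}$ controlling the random endpoint of the point-to-line geodesic (complement bounded by \cite[Theorem 2.3]{BBF22}); introduces a deterministic $v \in \cl_{2n}$ with $\psi(v) \asymp u(2n)^{2/3}$ lying to the right of $\Gamma_n^{\ptoline{}}(2n)$ and the point-to-point geodesic $\Gamma_v$ from $\bo 0$ to $v$; observes that on $\mathsf{TF}_n$ planarity gives $\psi(\Gamma_v(r)) \geq \psi(\Gamma_n^{\ptoline{}}(r)) \geq ur^{2/3}$; computes that this forces a transversal fluctuation of $\Gamma_v$ of at least $(1-2^{-1/3})ur^{2/3}$ from its straight-line interpolant at $\cl_r$; and then simply invokes the point-to-point local transversal fluctuation estimate \cite[Proposition 2.1]{BBB23}. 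Your multi-scale cascade $\{\mathcal A_j\}$ with each $\P(\mathcal A_j)$ bounded via \cite[Proposition C.8]{BGZ21} is exactly the internal mechanism of the proof of \cite[Proposition 2.1]{BBB23}: you are re-deriving that lemma in the point-to-line setting rather than reducing to it. The paper's version is more economical because it isolates the one genuinely new ingredient --- controlling the random endpoint so planarity can be applied once --- and otherwise leans on the existing point-to-point result.

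Two details in your cascade would also need fixing if you pursued it. In the $\mathcal A_j$ step the planarity conclusion is reversed: with $\psi(\Gamma_n^{\ptoline{}}(2^{j+1}r)) \leq \psi(w_j)$, ordering $\Gamma_j$ against the restriction of $\Gamma_n^{\ptoline{}}$ up to $\cl_{2^{j+1}r}$ gives $\psi(\Gamma_j(2^j r)) \geq \psi(\Gamma_n^{\ptoline{}}(2^j r)) \geq u\bigl((2\alpha)^j 2r\bigr)^{2/3}$, which \emph{exceeds} $\psi(w_j')$ since $\alpha < \sqrt 2$; the large transversal fluctuation is a positive excess over the interpolant, not the inequality $\psi(\Gamma_j(2^j r)) \leq \psi(w_j')$ you wrote. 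Separately, the initial event $\{\psi(\Gamma_n^{\ptoline{}}(r)) \geq ur^{2/3}\}$ does not trigger the base case $\mathcal A_0$, whose first threshold $u(2r)^{2/3}$ is larger than $ur^{2/3}$; the cascade thresholds need recalibrating before the telescoping argument on $\mathcal A^c$ goes through.
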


The second lemma says that for small enough interval (at the scale of $ n^{2/3}),$ the interval-to-line passage time tail has same estimate as the point-to-line passage time. Note that this is analogous to Lemma \ref{typical interval to point passage time}. Recall the notations as defined before Lemma \ref{typical interval to point passage time}.
\begin{lemma}
        \label{Interval to full line lemma}
        For any $\vep>0$ small enough there exists $\delta>0,C>0$ such that for $n$ and $x$ sufficiently large with $x \ll n^{1/9}$
        \[
        \P \left( \sup_{u \in U_\delta^0, v \in \cl_{2n} } T_{u,v} \leq 4n -
    x 2^{4/3}n^{1/3}\right) \geq Ce^{-(\frac{1}{6}+\vep)x^3}.
        \]
        \end{lemma}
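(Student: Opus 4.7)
The plan is to mirror the proof strategy of Lemma \ref{typical interval to point passage time} (the interval-to-point analogue), combining the sharp left-tail lower bound for the point-to-line passage time from Theorem \ref{t: passage time tail estimates}, (ii) (with $\beta=1$) with a cubic-tail estimate on the oscillation of the profile $u \mapsto T_{u,\cl_{2n}}$ across the short interval $U_\delta^0$. Noting that $\sup_{u\in U_\delta^0,\, v\in\cl_{2n}} T_{u,v}=\sup_{u\in U_\delta^0} T_{u,\cl_{2n}}$, the goal is to show that this supremum stays below $4n-x\cdot 2^{4/3}n^{1/3}$ with probability at least $Ce^{-(1/6+\vep)x^{3}}$.

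First I would pick $\eta=\eta(\vep)>0$ small enough that $(1+\eta)^{3}(\tfrac{1}{6}+\tfrac{\vep}{2})<\tfrac{1}{6}+\vep$. Applying Theorem \ref{t: passage time tail estimates}, (ii) with $T_n^{*}=T_n^{\ptoline{}}$ and the parameter $(1+\eta)x$ (using that $x\ll n^{1/9}\ll n^{1/10}$ for large $n$), we obtain
\[
\P\bigl(T_{\bo 0,\cl_{2n}}\le 4n-(1+\eta)x\cdot 2^{4/3}n^{1/3}\bigr)\ge \exp\!\bigl(-(\tfrac{1}{6}+\tfrac{\vep}{2})(1+\eta)^{3}x^{3}\bigr)\ge e^{-(\frac{1}{6}+\vep)x^{3}}.
\]
Call this event $A$. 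Next, for any fixed $\eta$ and for $\delta>0$ chosen sufficiently small, I would appeal to the cubic-tail oscillation bound for the point-to-line profile over a window of width $\delta n^{2/3}$: there exist $C_{1},c_{1}>0$ with
\[
\P\Bigl(\sup_{u\in U_\delta^0}T_{u,\cl_{2n}}-T_{\bo 0,\cl_{2n}}\ge \eta x\cdot 2^{4/3}n^{1/3}\Bigr)\le C_{1}\exp\!\bigl(-c_{1}\eta^{3}x^{3}/\sqrt{\delta}\bigr).
\]
Such an estimate is (essentially) available from BBF22/BBB23-type increment bounds for point-to-line profiles: on scale $\delta n^{2/3}$ the profile behaves Brownian-like with Tracy–Widom upper-tail decay, giving cubic tails with a $\sqrt{\delta}$ factor that shrinks with $\delta$. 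Call the complementary event $B$.

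Finally, on $A\cap B$ one has $\sup_{u\in U_\delta^0}T_{u,\cl_{2n}}\le 4n-x\cdot 2^{4/3}n^{1/3}$, which is precisely the event appearing in the lemma. A simple union bound gives
\[
\P(A\cap B)\ge \P(A)-\P(B^{c})\ge e^{-(\frac{1}{6}+\vep)x^{3}}-C_{1}\exp(-c_{1}\eta^{3}x^{3}/\sqrt{\delta}).
\]
Choosing $\delta=\delta(\eta,\vep)$ so small that $c_{1}\eta^{3}/\sqrt{\delta}>\tfrac{1}{6}+2\vep$ renders the second term negligible compared to the first for all sufficiently large $x$, yielding the desired lower bound $Ce^{-(\frac{1}{6}+\vep)x^{3}}$. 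The main obstacle is establishing the cubic-in-$x$ upper-tail control of the oscillation $\sup_{u\in U_\delta^0}T_{u,\cl_{2n}}-T_{\bo 0,\cl_{2n}}$ with an explicit $\sqrt{\delta}$ dependence in the exponent; a purely Gaussian $\exp(-c x^{2}/\delta)$ estimate would be insufficient since $x$ can grow polynomially in $n$. This sharper cubic bound will be extracted by combining an upper-tail estimate for interval-to-line passage times from \cite{BBF22} with a transversal fluctuation argument restricting the associated geodesics to a thin strip around $U_\delta$, thereby reducing the oscillation to a window on which KPZ-scale moderate deviations directly apply.
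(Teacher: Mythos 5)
Your decomposition into $A$ (the point-to-line passage time from $\bo 0$ is small) and $B$ (the profile $u\mapsto T_{u,\cl_{2n}}$ oscillates little over $U_\delta^0$) is logically sound and would yield the lemma, but it departs from the paper's route and rests on an ingredient you have not established: a cubic tail $\P(\sup_{u\in U_\delta^0}T_{u,\cl_{2n}}-T_{\bo 0,\cl_{2n}}\ge \eta x\,2^{4/3}n^{1/3})\le C_1 e^{-c_1\eta^3 x^3/\sqrt\delta}$ for the one-sided oscillation of the point-to-line profile. That bound is not in \cite{BBF22} or \cite{BBB23} in the form you need; the moderate-deviation increment estimates in the literature are Gaussian on the local scale, and, as you correctly observe, a Gaussian tail $e^{-cx^2/\delta}$ is useless here since $x\to\infty$ while $\delta$ is fixed. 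So the proposal, as written, has a genuine gap: the entire burden of the lemma is shifted onto an unproven oscillation estimate.

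The paper's proof sidesteps this difficulty with a cleaner device. One introduces an auxiliary vertex $\bo{-\mu n}$ with $\mu=\delta^{3/2}$ sitting behind the interval, and uses superadditivity $T_{\bo{-\mu n},\cl_{2n}}\ge T_{\bo{-\mu n},u}+T_{u,v}$ for all $u\in U_\delta^0$, $v\in\cl_{2n}$. One then works with the two events $\cs=\{\inf_{u\in U_\delta^0}T_{\bo{-\mu n},u}-4\mu n\ge -\tfrac{\vep}{2}2^{4/3}xn^{1/3}\}$ (a high-probability concentration event, controlled by \cite[Theorem 4.2]{BGZ21} with an exponent scaling like $x^3/\mu$, so negligible for small $\mu$) and $\ct=\{T_{\bo{-\mu n},\cl_{2n}}\le 4(1+\mu)n-(1+\tfrac\vep2)2^{4/3}xn^{1/3}\}$ (the rare event, whose probability is bounded below using Theorem~\ref{t: passage time tail estimates}, (ii)). Since $\cs\cap\ct$ is contained in the target event, $\P(\cs\cap\ct)\ge\P(\ct)-\P(\cs^c)$ immediately gives the claim after tuning $\mu$ against $\vep$. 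This replaces your oscillation lemma by a single known concentration bound. If you tried to prove your oscillation bound rigorously, you would most likely rediscover exactly this superadditivity trick (write $\sup_u T_{u,\cl_{2n}}\le T_{\bo{-\mu n},\cl_{2n}}-\inf_u T_{\bo{-\mu n},u}$), so it is worth adopting the paper's decomposition directly rather than routing through an unproven increment estimate.
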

We will prove both the lemmas in Section \ref{s: LPP estimates}. \qed
\paragraph{\textbf{Proof of liminf Upper Bound for the Half Space Case:}}
        We will again not write the proof in details, since the proof is exactly same as the point-to-point case using the following two lemmas which we will prove in Section \ref{s: LPP estimates}.\\
         Let $U_\delta$ denote the parallelogram whose sides are on $\cl_0$ (resp.\ $\cl_{2n}$) with endpoints $\bo{0}$ and \newline $(-\delta n^{2/3},\delta n^{2/3})$ (resp.\ $\bo{n}$ and $(n-\delta n^{2/3},n+\delta n^{2/3})$). $U_\delta^0$ and $U_\delta^n$ are defined as before. We have 
    \begin{lemma}
    \label{lemma: interval to line in half space}
    For any $\vep>0$ small enough there exists $\delta>0,C>0$ such that for $n$ and $x$ sufficiently large with $x \ll n^{1/9}$
     \[
        \P \left( \sup_{u \in U_\delta^0, v \in U_\delta^n } T^{\ptohalfspace{}}_{u,v} \leq 4n -
    x 2^{4/3}n^{1/3}\right) \geq Ce^{-(\frac{1}{24}+\vep)x^3}.
\]
\end{lemma}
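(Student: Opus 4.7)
The plan is to follow the template of Lemma \ref{typical interval to point passage time} (the analogous bulk, $\beta=2$ statement proved in Section \ref{s: LPP estimates}), with the sole structural change being that the one-point left-tail input is now taken from Theorem \ref{t: passage time tail estimates}(ii) applied in the half-space ($\beta=4$) setting; this supplies the constant $1/24=1/(6\cdot 4)$ in the exponent.

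Concretely, we would fix $\vep>0$ and an auxiliary small parameter $\eta\in(0,\vep)$ (to be chosen as a function of $\vep$ at the end), and introduce
\[
E_1 := \l\{T^{\ptohalfspace{}}_n \le 4n - (1+\eta)x\cdot 2^{4/3} n^{1/3}\r\}, \qquad
E_2 := \l\{\sup_{u\in U_\delta^0,\,v\in U_\delta^n} T^{\ptohalfspace{}}_{u,v} \le T^{\ptohalfspace{}}_n + \eta x\cdot 2^{4/3} n^{1/3}\r\}.
\]
On $E_1\cap E_2$ the sup is at most $4n-x\cdot 2^{4/3} n^{1/3}$, so the whole task reduces to a lower bound on $\P(E_1\cap E_2)$. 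Theorem \ref{t: passage time tail estimates}(ii) directly yields $\P(E_1)\ge \exp(-(\tfrac{1}{24}+\tfrac{\vep}{3})(1+\eta)^3 x^3)$, which for $\eta$ small enough in terms of $\vep$ exceeds $2C\exp(-(\tfrac{1}{24}+\vep)x^3)$; it thus remains to establish $\P(E_2\mid E_1)\ge 1/2$.

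For the conditional control of $E_2$, the plan is to discretize $U_\delta^0\times U_\delta^n$ at an appropriate mesh (reducing to a finite collection of pairs by monotonicity of the half-space passage time in the endpoints), and for each grid pair $(u,v)$ establish a concatenation bound of the form $T^{\ptohalfspace{}}_{u,v} - T^{\ptohalfspace{}}_n \le R_u+S_v$, where $R_u$ and $S_v$ are differences of auxiliary passage times obtained by routing both $T^{\ptohalfspace{}}_{u,v}$ and $T^{\ptohalfspace{}}_n$ through an intermediate vertex $w$ placed at distance of order $n^{2/3}$ above the diagonal in the half-space interior. The transversal fluctuation estimate for half-space geodesics (Lemma \ref{lemma: local transversal fluctuation for half space}) then justifies this rerouting by ensuring that typical geodesics do pass near $w$, while the upper tails of $R_u$ and $S_v$ are handled by Theorem \ref{t: passage time tail estimates}(i). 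A union bound across the $O_\delta(1)$ grid pairs, valid once $\delta$ is small in terms of $\eta$, furnishes the required conditional estimate.

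The hard part will be combining the two events cleanly, since $E_1$ is atypical and $E_2$ is not itself a monotone function of the environment. The plan is to circumvent this by reformulating $E_2$ in terms of good events of the form $\{R_u+S_v\le \eta x\cdot 2^{4/3} n^{1/3}\}$, each of which (after suitable formulation) is decreasing in the weight environment; positive association via FKG then transfers the unconditional upper-tail bound, obtained from Theorem \ref{t: passage time tail estimates}(i), into the desired conditional estimate on $E_1$. The rerouting through an auxiliary vertex $w$ in the half-space interior is precisely the new technical ingredient specific to the half-space setting—absent from the bulk analog, where a direct concatenation between $\bo{0}$ and $u\in U_\delta^0$ is unavailable because these two points lie on the same anti-diagonal $\cl_0$. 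Choosing $\eta$ small in terms of $\vep$ and then $\delta$ small in terms of $\eta$ absorbs all error terms into the target exponent and completes the proof.
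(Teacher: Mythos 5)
Your approach deviates substantially from the paper's, and I don't think the deviation is sound. The paper's argument for this lemma is structurally identical to its proofs of Lemmas \ref{typical interval to point passage time} and \ref{Interval to full line lemma}: introduce auxiliary diagonal points $\bo{-\mu n}$ and $\bo{(\mu+1) n}$, note that $\bo{-\mu n}\le u$ and $v\le\bo{(\mu+1)n}$ for every $u\in U_\delta^0$, $v\in U_\delta^n$ (once $n$ is large relative to $\mu^{-1}\delta$), and use super-additivity of the half-space passage time to obtain the \emph{deterministic} sandwich
$T^{\ptohalfspace{}}_{u,v}\le T^{\ptohalfspace{}}_{\bo{-\mu n},\bo{(\mu+1) n}}-T^{\ptohalfspace{}}_{\bo{-\mu n},u}-T^{\ptohalfspace{}}_{v,\bo{(\mu+1)n}}$.
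Then the rare event (supplying the $e^{-(1/24+\vep)x^3}$ factor via Theorem~\ref{t: passage time tail estimates}(ii)) involves only the middle, diagonal-to-diagonal half-space passage time, while the two flanking passage times give high-probability events $\cs_1,\cs_2$ whose complements are bounded via constrained-passage-time estimates, and the conclusion comes from $\P(\ct)-\P(\cs_1^c)-\P(\cs_2^c)$. No FKG is needed. Your premise that ``a direct concatenation between $\bo{0}$ and $u\in U_\delta^0$ is unavailable'' is true but irrelevant: the concatenation in the paper starts from $\bo{-\mu n}$, not $\bo{0}$, and this works identically in the bulk and half-space settings.

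Your replacement—comparing $T^{\ptohalfspace{}}_{u,v}$ directly to $T^{\ptohalfspace{}}_n$ and rerouting through an auxiliary vertex $w$—has a real gap. There is no super-additivity that bounds $T^{\ptohalfspace{}}_{u,v}$ \emph{from above} in terms of $T^{\ptohalfspace{}}_n$ and auxiliary pieces: $T_{u,v}-T_n$ is a difference of two increasing functionals of the weights and cannot be dominated by a sum of passage times (super-additivity only gives lower bounds on the long passage time). As a result, your $R_u$, $S_v$ would necessarily be differences of passage times, hence non-monotone, and the FKG step you rely on to convert the unconditional estimate $\P(E_2)\ge 1/2$ into the conditional estimate $\P(E_2\mid E_1)\ge 1/2$ would not apply. (FKG requires the events being intersected to be co-monotone; $E_1$ is decreasing, but a difference of passage times being small is neither increasing nor decreasing.) The transversal-fluctuation input from Lemma~\ref{lemma: local transversal fluctuation for half space} also does not by itself yield an upper bound on $T_{u,v}-T_n$: knowing geodesics pass near $w$ controls geometry, not the discrepancy in passage times along perturbed endpoints. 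You should abandon this route and instead mirror the super-additive decomposition used for Lemma~\ref{typical interval to point passage time}, noting only that the half-space constraint makes the bounds on $\P(\cs_1^c),\P(\cs_2^c)$ pass through constrained passage times (as in Lemma~\ref{lemma: infimum over a line for half space}).
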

\begin{lemma}
 \label{lemma: local transversal fluctuation for half space}
 For all $0 \leq r \leq 2n,$ we have there exists $C,c>0$ such that for all $n \geq 1, \ell$ sufficiently large 
 \[
 \P \left( \psi \left ( \Gamma_n^{\ptohalfspace{}}(r) \right ) \leq -\ell r^{2/3}\right) \leq Ce^{-c \ell}.
 \]
\end{lemma}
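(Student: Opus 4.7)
The plan is to adapt the standard transversal fluctuation argument for planar LPP (analogous to \cite[Proposition 2.1]{BBB23}) to the half-space setting by combining super-additivity of the half-space passage time with the moderate deviation estimates of Theorem~\ref{t: passage time tail estimates} and a comparison with ordinary exponential LPP. We work in the equivalent formulation of the half-space model with i.i.d.\ $\text{Exp}(1)$ weights on $\{(i,j): 0 \leq i \leq j\}$ and zero weights strictly below the diagonal, so that $\psi(\Gamma_n^{\ptohalfspace{}}(r)) \leq 0$ automatically, and the task reduces to showing that the geodesic does not stray too far upward from the diagonal.

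If $\psi(\Gamma_n^{\ptohalfspace{}}(r)) \leq -\ell r^{2/3}$, then there is a lattice point $u \in \cl_r$ with $\psi(u) \leq -\ell r^{2/3}$ on the geodesic, and super-additivity gives $T_n^{\ptohalfspace{}} \leq T^{\ptohalfspace{}}_{\bo 0,u}+T^{\ptohalfspace{}}_{u,\bo n}$. In the i.i.d.\ upper-triangle formulation we have the pointwise domination $T^{\ptohalfspace{}}_{\bo 0, v} \leq T_{\bo 0, v}$ for any $v$ in the upper triangle, where $T_{\bo 0, v}$ is the full-plane exponential LPP passage time, since the below-diagonal zero weights can only be made larger in the full-plane model. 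For $u \in \cl_r$ with $|\psi(u)| = \alpha r^{2/3}$, the exponential LPP limit shape gives $\E[T_{\bo 0,u}] \approx (\sqrt{u_1}+\sqrt{u_2})^2 \approx 2r - c\alpha^2 r^{1/3}$, reflecting the quadratic curvature of the limit shape at the diagonal direction; the analogous estimate holds for $T_{u,\bo n}$.

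Combined with the lower bound $T_n^{\ptohalfspace{}} \geq 4n - Mn^{1/3}$ from Theorem~\ref{t: passage time tail estimates}(ii) (valid off an event of probability $Ce^{-cM^3}$), the geodesic passing through such a $u$ forces the sum $T_{\bo 0, u}+T_{u, \bo n}$ to exceed its mean by roughly $c\alpha^2 r^{1/3}$; by Theorem~\ref{t: right tail upper bound for beta geq 2}(ii) applied to the off-diagonal passage times (which are equivalent in distribution to the largest eigenvalue of the appropriate Laguerre $\beta=2$ ensemble), this occurs with probability at most $\exp(-c\alpha^3)$. A union bound over lattice points $u$ in shells $\{\psi(u) \in [-(\alpha+1)r^{2/3}, -\alpha r^{2/3}]\}$ (each containing $O(r^{2/3})$ points) and a sum over $\alpha \geq \ell$ yields
\[
\P\l(\psi(\Gamma_n^{\ptohalfspace{}}(r)) \leq -\ell r^{2/3}\r) \leq \sum_{\alpha \geq \ell} C r^{2/3} e^{-c\alpha^3},
\]
which is dominated by $Ce^{-c'\ell}$ for $\ell$ sufficiently large. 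The polynomial prefactor $r^{2/3}$ can be absorbed at the cost of a weaker exponent, which is why we obtain the stated $e^{-c\ell}$ bound rather than a cubic-exponential $e^{-c\ell^3}$ bound; this is adequate for the application to the $\liminf$ upper bound in the half-space LIL.

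The main technical obstacle will be uniformly controlling the upper tail of the off-diagonal passage times $T_{\bo 0, u}$ and $T_{u, \bo n}$ over the range $\alpha \in [\ell, O(r^{1/3})]$, where the centering and scaling both depend on $\alpha$. This is handled by Theorem~\ref{t: right tail upper bound for beta geq 2}(ii), which provides moderate deviation estimates with matching constants uniformly for $m/n$ bounded, covering the relevant regime. Additionally, one must verify that the curvature constant $c$ in the expansion $2r - c\alpha^2 r^{1/3}$ is uniform over the relevant range of $\alpha$; this follows from a direct Taylor expansion of $(\sqrt{u_1}+\sqrt{u_2})^2$ around $u_1 = u_2 = r/2$, which is legitimate so long as $\alpha \ll r^{1/3}$ (beyond which the event is trivially null since the geodesic is constrained to the upper triangle).
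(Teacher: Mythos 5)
Your proposal takes a genuinely different route from the paper's (direct domination by full-plane LPP plus a union bound versus the paper's coarse-grained parallelogram decomposition via \cite[Proposition C.8 and Theorem 4.2(iii)]{BGZ21}), but the sketch as written has a gap that would have to be filled before it could stand on its own. The one-step argument you describe — write $T_n^{\ptohalfspace{}}\leq T_{\bo 0,u}+T_{u,\bo n}$ for $u$ on $\cl_r$ with $|\psi(u)|=\alpha r^{2/3}$, lower-bound $T_n^{\ptohalfspace{}}\geq 4n-Mn^{1/3}$ off a bad event of probability $e^{-cM^3}$, and use the curvature deficit $c\alpha^2 r^{1/3}$ in $\E[T_{\bo 0,u}+T_{u,\bo n}]$ — only forces the right-hand side to be $c\alpha^2 r^{1/3}-Mn^{1/3}$ above its mean. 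For this to be positive (and large enough to feed into the moderate deviation bound) you need $\alpha^2 r^{1/3}\gg n^{1/3}$, i.e.\ $\alpha\gg(n/r)^{1/6}$. That is fine when $r\asymp n$, but for $r\ll n$ and $\alpha$ of fixed size $\ell$ the event is not small and the union bound produces nothing. This is exactly the scale mismatch that the multiscale argument of \cite[Proposition 2.1]{BBB23} is designed to overcome: a one-shot comparison against the endpoint passage time only controls transversal fluctuations at scale $n^{2/3}$, not at the local scale $r^{2/3}$. You name-check the multiscale step in your opening sentence but the calculation you actually carry out is the one-scale version; to execute the iteration you would in particular need transversal fluctuation control for half-space geodesics with \emph{off-diagonal} endpoints (which is what the paper's preliminary Lemma~\ref{lemma: transversal fluctuation in half space} provides, and what makes the constrained geometry near the diagonal delicate).

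Two secondary points. First, your explanation for the weaker exponent $e^{-c\ell}$ (absorbing the polynomial $r^{2/3}$ prefactor) is not the real reason. If your argument worked, the $r^{2/3}$ prefactor against $e^{-c\ell^3}$ would still give $e^{-c'\ell^3}$ for $\ell$ sufficiently large; the reason the paper cannot improve to a cubic exponent (see the remark following the lemma and Remark~\ref{rem:ptptcubeexponent}) is that the half-space lower-tail input comes from \cite[Theorem 4.2(iii)]{BGZ21} for \emph{constrained} passage times, which only yields a linear exponent. Second, you identify uniformity over $\alpha\in[\ell, O(r^{1/3})]$ as the main technical obstacle, but the dominant difficulty is actually the $r\ll n$ regime discussed above, not the far-off-diagonal range of $\alpha$.
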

\begin{remark}
    In the above lemma also we expect $e^{-\ell^3}$ upper bound like the previous cases. But as we will see, since we are using the constrained passage time instead of the passage time in the half space model for the estimates, we get the weaker upper bound. Although, this is sufficient for our purpose.
\end{remark}

\section{Geometric estimates in LPP}
\label{s: LPP estimates}
In this section we prove the LPP estimates which we have used to prove the LIL results. We start with proving Lemma \ref{lemma: infimum over a line for half space}.

 \begin{proof}[Proof of Lemma \ref{lemma: infimum over a line for half space}]
     The proof is an application of \cite[Theorem 4.2 (iii)]{BGZ21} and a union bound. 
     Consider the rectangle $U_n$ whose vertices of the longer sides are $\bo{0},\bo{n}$ and the shorter sides have length $n^{2/3}$ and the rectangle lies above the line $x=y$. We will use the coupling between the half space model and the exponential LPP model. Consider the usual exponential LPP mpdel. We define 
\[
T_n^{U_n}:=\max \{ \ell(\gamma): \text{$\gamma$ is an up-right path from $\bo{0}$ to $\bo{n}$ and } \gamma \subset U_n\}.
\]
Note that 
\[
\P \left(\inf_{1 \leq m \leq n}\l(T_m^{\ptohalfspace{}}-4m\r) \leq -\theta n^{1/3} \right) \leq \P \left ( \inf_{1 \leq m \leq n}\l(T_m^{U_m}-4m\r) \leq -\theta n^{1/3} \right).
\]
Now, by \cite[Theorem 4.2(iii)]{BGZ21} we have for sufficiently large $n(n \geq n_0),$
\begin{equation}
\label{eq: half space estimate for well seperated points}
\P \left( \inf_{\frac n2 \leq m \leq n} \l( T_m^{U_m}-\E T_m^{\ptop{}}\r) \leq -\theta n^{1/3} \right) \leq Ce^{-c\theta}.
\end{equation}
Also we have (see \cite[Theorem 2]{LR10}) there exists $C_1>0$ such that for all $n$
\begin{equation}
\label{expected_passage_time_estimate}
      \left|\mathbb{E}T_{n}^{\ptop{}}-4n\right| \leq C_1n^{1/3}.
\end{equation}
Using \eqref{expected_passage_time_estimate} we have, for $\theta>C_1$
\begin{equation}
\label{relation between 4n and expected passage time}
\big \{ T_n^{\ptohalfspace{}}-4n \leq -\theta n^{1/3} \big \} \subset \big \{ T_n^{\ptohalfspace{}}-\E T_n^{\ptop{}} \leq -(\theta-C_1) n^{1/3} \big \}.
\end{equation}
For $k \geq 1$ and $\frac{n}{2^k} \geq n_0$ we define the following events.
\[
\cj_k:=\Big \{\inf_{\frac {n}{2^{k}} \leq m \leq \frac{n}{2^{k-1}}} \{ T_m^{\ptohalfspace{}}-4m\} \leq -\theta n^{1/3} \Big \}.
\]
Then using \eqref{eq: half space estimate for well seperated points} and \eqref{relation between 4n and expected passage time} we have for $k$ as above there exists $c>0$ such that 
\[
\P( \cj_k ) \leq Ce^{-c(\theta-C_1)2^{\frac{k-1}{3}}}.
\]
So, for $\theta>2C_1$ we have for some $c>0,$
\[
\P( \cj_k ) \leq Ce^{-c\theta2^{\frac{k-1}{3}}}.
\]
Further, we note that when $m < n_0$ for $\theta$ and $n$ sufficiently large 
\[
T_m^{\ptohalfspace{}}-4m > -4n_0 > -\theta n^{1/3}.
\]
Hence, combining the above 
\[
\bigg \{ \inf_{1 \leq m \leq n} \{ T_m^{\ptohalfspace{}}-4m\} \leq -\theta n^{1/3} \bigg \} \subset \bigg \{ \inf_{1 \leq m < n_0} \{ T_m^{\ptohalfspace{}}-4m\} \leq -\theta n^{1/3} \bigg \} \cup \bigcup_{k=1}^{\log_2 \frac{n}{n_0}} \cj_k.
\]
Finally we have 
\[
\P \left( \inf_{1 \leq m \leq n} \{ T_m^{\ptohalfspace{}}-4m\} \leq -\theta n^{1/3} \right) \leq \sum_{k=1}^{\infty}Ce^{-c \theta 2^{\frac{k-1}{3}}} \leq Ce^{-c \theta}.
\]
This completes the proof. 
 \end{proof}
\begin{remark}\label{rem:ptptcubeexponent}
The above lemma is true for point-to-point passage time as well. The proof will be exactly same, except we will use \cite[Theorem 4.2(i)]{BGZ21}. Also, in this case we will get $Ce^{-c \theta^3}$ as upper bound. We get two different powers for point-to-point and half space case because, in the half space case we are using the constrained passage time instead of the passage time in the half space model. Although, we expect similar $e^{-c \theta^3}$ in Lemma \ref{lemma: infimum over a line for half space} also, we could not prove it without the passage time estimates in general directions for the half space model. However, the bound in Lemma \ref{lemma: infimum over a line for half space} is enough for our purpose to prove the lower bound for $\liminf$.
\end{remark}
\begin{figure}[t!]
    \includegraphics[width=15 cm]{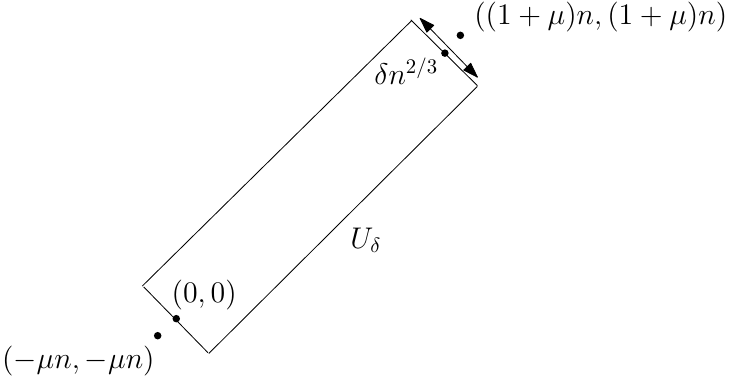}
    \caption{To prove Lemma \ref{typical interval to point passage time} we consider two points $-\bo{\mu n}$ and $\bo{(1+\mu)n}$ for small enough $\mu$. We consider three events $\cs_1,\cs_2,\ct.$ $\cs_1$ (resp.\ $\cs_2$) is the event that the passage time from $-\bo{\mu n}$ (resp.\ $\bo{(1+\mu)n}$) to $U_\delta^0$ (resp.\ $U_\delta^n$) is not too small. $\ct$ is the event that the passage time between $-\bo{\mu n}$ and $\bo{(1+\mu)n}$ is small. By super-additivity on $\cs_1 \cap \cs_2 \cap \ct,$ the passage time between $U_\delta^0$ and $U_\delta^n$ is small. We estimate the probabilities of the events $\cs_1,\cs_2$ using \cite[Theorem 4.2]{BGZ21}. Finally we apply Theorem \ref{t: passage time tail estimates}, (ii) to find the desired lower bound.}
    \label{interval to interval figure}
\end{figure}
\begin{proof}[Proof of Lemma \ref{typical interval to point passage time}]
    The proof is a simplified version of the proofs of \cite[Lemma 4.1]{BGZ21}, \cite[Proposition 1.12]{BBB23}. Depending on $\vep$ we will choose $\mu$ small enough. Now, we define $\delta:=\mu^{2/3}$. Consider the lines $\cl_{-2 \mu n}$ and $\cl_{2 (\mu+1)n}$ and consider the vertices $\bo{-\mu n}$ and $\bo{(\mu+1)n}$ (see Figure \ref{interval to interval figure}).\\ 
    We have (see \cite[Theorem 2]{LR10}) for each $\delta >0$, there exists a positive constant $C_1$ (depending only on $\delta$) such that for all $m,n$ sufficiently large with $\delta < \frac{m}{n} < \delta^{-1}$ we have
\begin{equation}
\label{expected_passage_time_estimate general direction}
      |\mathbb{E}T_{\bo{0},(m,n)}-(\sqrt{m}+\sqrt{n})^2| \leq C_1n^{1/3}.
\end{equation}
We note the following. By \eqref{expected_passage_time_estimate general direction} and a Taylor series argument we have there exists $C_2>0$ such that 
\begin{equation}
\label{M_1 expectation bound}
   \inf_{u \in U_{\delta}^0} \E T_{\bo{-\mu n}, u} \geq 4\mu n-C_2( \mu n)^{1/3}.
\end{equation}
Similarly, we have 
\begin{equation}
\label{M_2 expectation bound}
\inf_{u \in U_\delta^n} \E T_{u, \bo{(\mu+1)n}} \geq 4 \mu n-C_2 (\mu n)^{1/3}.
\end{equation}
We define the following events.
\begin{itemize}
    \item $\cs_1:=\big \{ \inf_{u \in U_\delta^0} T_{\bo{-\mu n}, u}-4 \mu n \geq -
    \frac {\vep}{4} 2^{4/3}x n^{1/3} \big \}$.
    \item $\cs_2:=\big \{ \inf_{u \in U_\delta^n} T_{u, \bo{(\mu+1)n}} -4 \mu n
    \geq -
    \frac {\vep}{4} 2^{4/3}x n^{1/3} \big \}.$
    \item $\ct:= \big \{ T_{\bo{-\mu n},\bo{(\mu+1)n}} \leq 4((1+2 \mu)n)-(1+\frac \vep2)2^{4/3}xn^{1/3}\big \}.$
    \end{itemize}
    Clearly, 
    \[
    \cs_1 \cap \cs_2 \cap \ct \subset \Big \{ \sup_{u \in U_\delta^0, v \in U_\delta^n} T_{u,v} \leq 4n -
    2^{4/3}x n^{1/3} \Big \}.
    \] 
    Further note that by \eqref{M_1 expectation bound} we have 
    \[
    \cs_1^c \subset \Big \{ \inf_{u \in U_\delta^0} T_{\bo{-\mu n}, u}-\E T_{\bo{-\mu n},u} \leq C_2(\mu n)^{1/3}-\frac {\vep}{4} 2^{4/3}x n^{1/3} \Big \}.
    \]
    Using \cite[Theorem 4.2]{BGZ21} we have for $n$ sufficiently large there exists $c>0$ such that 
    \[
    \P ( \cs_1^c) \leq e^{-\frac{c \vep^3x^3}{\mu}}.
    \]
    Now, we can chose $\mu$ small enough (depending on $\vep$) so that for all $x$ large enough the right hand side is upper bounded by $\frac{e^{-(\frac{1}{12}+\vep)x^3}}{4}.$ Similarly, we have 
    \[
    \P (\cs_2^c) \leq \frac{e^{-(\frac{1}{12}+\vep)x^3}}{4}.
    \]
    Further, we have for sufficiently large $n$ and $x \ll n^{1/9}$ sufficiently large we have by Theorem \ref{t: passage time tail estimates}, (ii)
    \[
    \P( \ct) \geq e^{-\frac{1}{12}(1+\vep)\frac{(1+\frac \vep2)^3}{1+2 \mu }x^3}.
    \]
    We also choose $\mu$ sufficiently small (depending on $\vep$) so that 
    \[
    \frac{(1+\vep)^4}{1+2 \mu} \leq 1+12\vep.
    \]
    Combining all the above we have 
    \[
    \P \left( \sup_{u \in U_\delta^0, v \in U_\delta^n} T_{u,v} \leq 4n -
    2^{4/3}x n^{1/3}\right) \geq \frac{e^{-(\frac{1}{12}+\vep)x^3}}{2}.
    \]\end{proof}
    \begin{proof}[Proof of Lemma \ref{Interval to full line lemma}]
        Proof is similar to the proof of Lemma \ref{typical interval to point passage time}. We again chose $\mu$ small enough depending on $\vep$ and set $\delta:=\mu^{2/3}.$ We consider $\cl_{-2 \mu n}$ and the vertex $\bo{-\mu n}.$
        \begin{itemize}
    \item $\cs:=\big \{ \inf_{u \in U_\delta^0} T_{\bo{-\mu n}, u}-4 \mu n \geq -
    \frac {\vep}{2} 2^{4/3}x n^{1/3} \big \}$.
    \item $\ct:= \big \{ T_{\bo{-\mu n},\cl_{2n}} \leq 4((1+\mu)n)-(1+\frac \vep2)2^{4/3}xn^{1/3}\big \}.$
    \end{itemize}
    Clearly,
    \[
    \cs \cap \ct \subset \big \{ \sup_{u \in U_\delta^0, v \in \cl_{2n} } T_{u,v} \leq 4n -
    x 2^{4/3}n^{1/3} \big \}.
    \]
    Rest of the proof is exactly similar to Lemma \ref{typical interval to point passage time}. We use \cite[Theorem 4.2]{BGZ21} and Theorem \ref{t: passage time tail estimates}, (ii) respectively to estimate $\P(\cs \cap \ct).$ This completes the proof.
    \end{proof} 
\begin{proof}[Proof of Lemma \ref{lemma: interval to line in half space}]
    Recall the notations defined before Lemma \ref{lemma: interval to line in half space}.
    \begin{itemize}
    \item $\cs_1:=\big \{ \inf_{u \in U_\delta^0} T^{\ptohalfspace{}}_{\bo{-\mu n}, u}-4 \mu n \geq -
    \frac {\vep}{4} 2^{4/3}x n^{1/3} \big \}$.
    \item $\cs_2:=\big \{ \inf_{u \in U_\delta^n} T^{\ptohalfspace{}}_{u, \bo{(\mu+1)n}} -4 \mu n
    \geq -
    \frac {\vep}{4} 2^{4/3}x n^{1/3} \big \}.$
    \item $\ct:= \big \{ T^{\ptohalfspace{}}_{\bo{-\mu n},\bo{(\mu+1)n}} \leq 4((1+2 \mu)n)-(1+\frac \vep2)2^{4/3}xn^{1/3}\big \}.$
    \end{itemize}
    For the first event we observe that 
    \[
    (\cs_1)^c \subset \Big \{\inf_{u \in U_\delta^0}T^{U_\delta}_{-\bo{\mu n},u} \leq 4 \mu n-\frac{\vep}{2}2^{4/3}xn^{1/3} \Big \}.
    \]
    We find an upper bound for $\P\left( \left(\cs_1\right)^c\right)$ using \cite[Theorem 4.2 (iii)]{BGZ21}. We get for sufficiently large $n$
    \[
    \P \left(\cs_1^c \right) \leq Ce^{-\frac{cx}{\mu^{1/3}}}.
    \]
   
    Same bound holds for $\P \left(\cs_2^c \right)$. We bound $\P\left(\ct\right)$ using Theorem \ref{t: passage time tail estimates}, (ii). Rest of the arguments are similar to those in proof of Lemma \ref{typical interval to point passage time} and Lemma \ref{Interval to full line lemma} and we omit the details.
\end{proof}
\begin{figure}[t!]
    \includegraphics[width=13 cm]{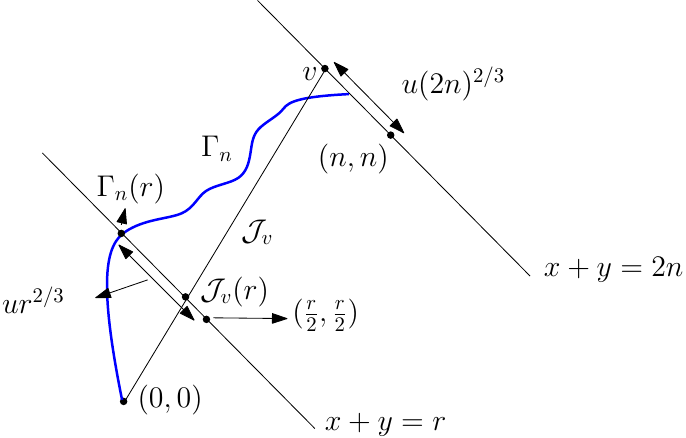}
    \caption{In the figure, the $\Gamma_{n}$(coloured in blue) is the point-to-line geodesic. To prove Lemma \ref{local transversal fluctuation for point to line geodesic}, we estimate the probability $\mathbb{P}\l(\psi\left (\Gamma_{n}(r)\r)\geq ur^{2/3}\right)$. First restrict to the event when $\Gamma_{n}$ does not have transversal fluctuation more than $u(2n)^{2/3}$ on $\cl_{2n}$. This is the event $\mathsf{TF}_{n}$. \cite[Theorem 2.3]{BBF22} implies that complement of this event has exponentially small (in $u$) probability.  Now, on the event $\mathsf{TF}_{n},$ if the geodesic $\Gamma_{n}$ has large transversal fluctuation on the line $\cl_r,$ from $x=y$ then the point-to-point geodesic $\Gamma_{\bo{0},v}$ will have large transversal fluctuation on the line $\cl_r$ from the line $\cj_v$ (this is a consequence of ordering of geodesics). \cite[Proposition 2.1]{BBB23} implies exponentially small upper bound for probability of this event.} 
    \label{fig: ptoline local transversal fluctuation} 
\end{figure}
\begin{proof}[Proof of Lemma \ref{local transversal fluctuation for point to line geodesic}]

    The proof will follow from \cite[Theorem 2.3]{BBF22} and \cite[Proposition 2.1]{BBB23}.  We will show for $1 < u \ll n^{1/14},\\ 
    \P\left(\psi\left (\Gamma^{\ptoline{}}_{n}(r)\right) \ge ur^{2/3}\right) \le Ce^{c_1u^2-c_2 u^3}$. The other case will follow by symmetry. Further, we can assume $r<n$. The cases when $r\geq n$ follows from \cite[Theorem 2.3]{BBF22}. We define the following event. 
    \[
    \mathsf{TF}_{n}:=\left \{0 \leq \psi \left(\Gamma^{\ptoline{}}_{n}(2n)\right) \leq u(2n)^{2/3}\right \}.
    \]
    We have  
    \[ 
    \left\{ \psi  \left(\Gamma^{\ptoline{}}_{n}(r)\right)\ge ur^{2/3} \right\} \subset (\mathsf{TF}_{n})^c \cup \left (\mathsf{TF}_{n} \cap \left \{ \psi\left (\Gamma^{\ptoline{}}_{n}(r)\right)\ge ur^{2/3} \right \} \right)  .
    \]
    Hence, by \cite[Theorem 2.3]{BBF22} we have for some constant $c>0$ and $1<u \ll n^{1/14}$
    \begin{equation}
    \label{estimate at n scale}
    \P\left (\psi\left (\Gamma^{\ptoline{}}_{n}(r) \right) \geq u r^{2/3} \right) \le e^{cu^2-\frac 43 u^3}+ \P \left( \mathsf{TF}_{n} \cap \left \{\psi \left (\Gamma^{\ptoline{}}_{n}(r)\right) \geq ur^{2/3} \right\} \right).
    \end{equation}
    We consider the second probability on the right hand side. We consider the vertex $v=(n+u (2n)^{2/3},n-u (2n)^{2/3}) \in \cl_{2n}$ (see Figure \ref{fig: ptoline local transversal fluctuation}). Consider $\Gamma_{v}$, the point-to-point geodesics from $\bo{0}$ to $v$. Now we use planarity of geodesics. Planarity is a property satisfied by the geodesics by which the geodesics that start from ordered vertices (in the space direction) stay ordered (in the space direction) throughout their journeys. This follows from the almost sure uniqueness of geodesics and from the fact that they do not form loops.  
    \[
    \mathsf{TF}_{n} \cap \left \{\psi \left (\Gamma^{\ptoline{}}_{n}(r)\right) \geq ur^{2/3}\right \} \subset \left \{ \psi \left (\Gamma_v(r) \right) \geq ur^{2/3} \right \}. 
    \]
    Further, consider the straight line $\cj_v$ joining $\bo{0}$ and $v$. Let $\cj_v(r)$ denote the intersection point of $\cj_v$ and $\cl_r$ (see Figure \ref{fig: ptoline local transversal fluctuation}). We have 
    \[
     \frac{\psi\left (\cj_v(r) \right)}{\frac u2 (2n)^{2/3}}=\frac{r}{2n}.
    \]
    Now, we observe if $\psi(\Gamma_v(r)) \geq u r^{2/3}$ we have 
    \begin{equation}
    \label{local transversal fluctuation of point to point geodesic}
    \psi \left (\Gamma_v(r) \right)-\psi\left (\cj_v(r) \right) \geq ur^{2/3}-\frac{r}{2n}u (2n)^{2/3} \geq \left (1-\frac{1}{2^{1/3}} \right )ur^{2/3}.
    \end{equation}
    Now, we apply \cite[Proposition 2.1]{BBB23} to $\Gamma_v$. Hence, using \eqref{local transversal fluctuation of point to point geodesic} and \cite[Proposition 2.1]{BBB23} there exists $C,c>0$ such that 
    \begin{equation}
    \label{estimate for point to point}
    \P \left( \psi \left (\Gamma_v(r) \right) \geq ur^{2/3}\right) \leq Ce^{-cu^3}.
    \end{equation}
    \eqref{estimate at n scale} and \eqref{estimate for point to point} prove the lemma.\end{proof}
    \begin{figure}[t!]
    \includegraphics[width=13 cm]{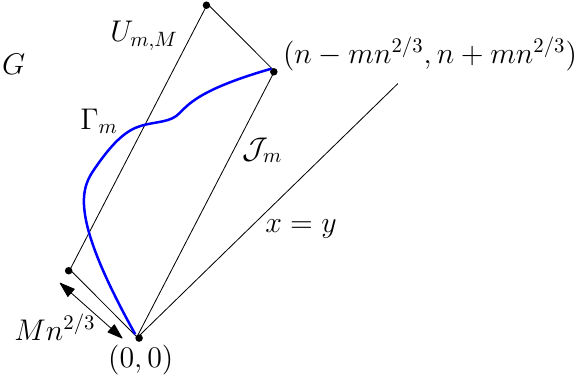}
    \caption{We consider the usual exponential LPP model. In the above figure $\Gamma_m$ is the maximising path between $\bo{0}$ and $\bo{n}_m$, where the maximum is taken over all up-right paths those don't intersect the region $\{(u,v) \in \Z^2: u>v\}.$ So, $\Gamma$ has the same distribution as the geodesic between $\bo{0}$ and $\bo{n}_m$ in the half space model. We want to estimate the probability of the event $\mathsf{LargeTF }(m,M,n)$, which is the event as shown in the figure. The above event can happen two ways. Either there is a path from $\bo{0}$ to $\bo{n}_m$ intersecting $G$ and has passage time not too small (this is the event $\ca)$. We use \cite[Proposition C.8]{BGZ21} to estimate the probability of $\ca$. On the complement of $\ca$, if $\Gamma_m$ intersects $G,$ then the half space passage time (hence also the restricted passage time $T_{\bo{0},\bo{n}_m}^{U_{m,M}}$) between $\bo{0}$ and $\bo{n}_m$ is small. We use \cite[Theorem 4.2(iii)]{BGZ21} to estimate this probability.} 
    \label{fig: halfspace transversal fluctuation} 
\end{figure}
To prove Lemma \ref{lemma: local transversal fluctuation for half space} first we prove the transversal fluctuation estimate in this case. For $m \geq 0 $, let $\bo{n}_{m}$ denote the vertex $(n-mn^{2/3},n+mn^{2/3})$ and let $\Gamma_m^{\ptohalfspace{}}$ denote the geodesic from $\bo{0}$ to $\bo{n}_m$ in the half space model. Let $\cj_m$ denote the straight line joining $\bo{0}$ and $\bo{n}_m$. Let $U_{m,M}$ denote the parallelogram whose opposite sides are on $\cl_0$ and $\cl_{2n}$ with endpoints $\bo{0}, \bo{n}_m, (-Mn^{2/3},Mn^{2/3}), \bo{n}_{m+M}$ respectively and these sides have lengths $2Mn^{2/3}$. So, $\cj_m$ is a side of the parallelogram (see Figure \ref{fig: halfspace transversal fluctuation}). Hence, the parallelogram does not intersect the region $\{(u,v) \in \Z: u>v\}$.
Let $\mathsf{LargeTF}(m,M,n)$ is the event that the geodesic $\Gamma^{\ptohalfspace{}}_m$ in the half space model intersect the region that lies above the opposite side of $\cj_m$ in $U_{m,M}$ (we will denote this region by $G$. See Figure \ref{fig: halfspace transversal fluctuation}).
\begin{lemma}
    \label{lemma: transversal fluctuation in half space}
    For $0< \gamma <1,$ and for all $0 <m \leq \gamma n^{1/3},$ we have there exists $C,c>0$ (depending only on $\gamma$) such that for all $M,n$ large enough (depending only on $\gamma$)
    \[
    \P \left( \mathsf{LargeTF }(m,M,n) \right) \leq Ce^{-c M}.
    \]
\end{lemma}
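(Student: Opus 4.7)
My plan is to work in the coupling with standard exponential LPP on $\Z^2$, where $\Gamma^{\ptohalfspace{}}_m$ is the unique maximizer of $\ell(\pi)$ among up-right paths $\pi\colon \bo{0}\to \bo{n}_m$ that remain in the half-space $\{u\leq v\}$. Since the parallelogram $U_{m,M}$ lies entirely in that half-space, the restricted passage time satisfies $T^{U_{m,M}}_{\bo{0},\bo{n}_m}\le T^{\ptohalfspace{}}_{\bo{0},\bo{n}_m}$, which I will exploit crucially. Write $\mu_{m,n}:=(\sqrt{n-mn^{2/3}}+\sqrt{n+mn^{2/3}})^2$; by \eqref{expected_passage_time_estimate general direction} this is within $O(n^{1/3})$ of $\E T_{\bo{0},\bo{n}_m}$, uniformly for $m\le \gamma n^{1/3}$ with $\gamma<1$.

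The key decomposition is to pick a threshold $\theta=\theta(M)$ (I anticipate $\theta=\alpha M^2$ for a small constant $\alpha=\alpha(\gamma)$) and set
\[
\ca:=\l\{\exists \pi\colon \bo{0}\to\bo{n}_m,\ \pi\subset\{u\leq v\},\ \pi\cap G\neq\emptyset,\ \ell(\pi)\ge \mu_{m,n}-\theta n^{1/3}\r\}.
\]
On $\mathsf{LargeTF}(m,M,n)\cap \ca^c$, the geodesic itself witnesses that its length $T^{\ptohalfspace{}}_{\bo{0},\bo{n}_m}$ is below $\mu_{m,n}-\theta n^{1/3}$, and a fortiori $T^{U_{m,M}}_{\bo{0},\bo{n}_m}<\mu_{m,n}-\theta n^{1/3}$. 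Hence
\[
\mathsf{LargeTF}(m,M,n)\subset \ca\ \cup\ \l\{T^{U_{m,M}}_{\bo{0},\bo{n}_m}<\mu_{m,n}-\theta n^{1/3}\r\}.
\]

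For the first event, I would decompose over the vertex $v$ at which a path $\pi$ first enters $G$ (its transversal displacement from $\cj_m$ is $\ge Mn^{2/3}$) and split $\ell(\pi)=T_{\bo{0},v}+T_{v,\bo{n}_m}$. The probability that either one-point passage time exceeds its mean by more than the quota allotted to it, given the enforced transversal displacement of $v$, is controlled by the one-point lower deviation and the transversal-fluctuation estimate from \cite[Proposition C.8]{BGZ21}. Summing the resulting bound over vertices on each anti-diagonal in $G$ and over anti-diagonals produces a bound of order $Ce^{-cM}$ once $\theta$ is chosen appropriately; the polynomial factors from counting vertices are absorbed into the exponential.

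For the second event, the restricted passage time $T^{U_{m,M}}_{\bo{0},\bo{n}_m}$ in a parallelogram of transversal width of order $Mn^{2/3}$ enjoys lower-tail concentration; \cite[Theorem 4.2(iii)]{BGZ21} yields
\[
\P\l(T^{U_{m,M}}_{\bo{0},\bo{n}_m}<\mu_{m,n}-\theta n^{1/3}\r)\le Ce^{-c\theta}
\]
for $\theta\le c'M$ (or a comparable polynomial relation), which with $\theta=\alpha M^2$ is well within $Ce^{-cM}$. Adding the two bounds yields the lemma.

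The main obstacle is the simultaneous tuning of $\theta$: I need $\theta$ large enough that the restricted passage time $T^{U_{m,M}}$ has a real chance of exceeding it (so the second bound is strong), yet small enough that in-$G$ paths with length $\ge \mu_{m,n}-\theta n^{1/3}$ are genuinely rare (so the first bound is strong). A secondary issue is that $m$ may range up to $\gamma n^{1/3}$, which shifts the characteristic direction; I handle this by working with $\mu_{m,n}$ rather than $4n$ and invoking the uniform expectation control of \eqref{expected_passage_time_estimate general direction}, along with the fact that $\gamma<1$ keeps $m n^{2/3}/n$ bounded away from $1$ so the one-point estimates apply uniformly.
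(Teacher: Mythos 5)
Your proposal is correct and follows essentially the same route as the paper: you use the same decomposition of $\mathsf{LargeTF}(m,M,n)$ into the event $\ca$ (a half-space path entering $G$ with passage time above $\E T_{\bo 0,\bo n_m}-c_1M^2n^{1/3}$) and the event that the constrained passage time $T^{U_{m,M}}_{\bo 0,\bo n_m}$ falls below that same threshold, with the key monotonicity $T^{U_{m,M}}\leq T^{\ptohalfspace{}}$ linking them, and you invoke the same two inputs, \cite[Proposition C.8]{BGZ21} and \cite[Theorem 4.2(iii)]{BGZ21}, with the threshold $\asymp M^2 n^{1/3}$. The only difference is presentational: you propose to re-derive the bound on $\P(\ca)$ via a union bound over entry vertices, whereas the paper simply cites Proposition~C.8 as giving $e^{-cM^3}$ directly.
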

\begin{proof}
We will use the natural coupling between exponential LPP and the half space model. Consider the usual exponential LPP model. Let $\Gamma_m$ is the maximising path between $\bo{0}$ and $\bo{n}_m$, where the maximum is taken over all up-right paths those don't intersect the region $\{(u,v) \in \Z^2: u>v\}.$ So, $\Gamma_m$ has the same distribution as $\Gamma_m^{\ptohalfspace{}}$. Now the proof is a direct application of \cite[Proposition C.8 and Theorem 4.2 (iii)]{BGZ21}. As in \cite[Proposition C.8]{BGZ21} we define the following event.
    \[
    \ca:=\{ \exists \gamma: \bo{0} \rightarrow \bo{n}_m \text{ with } \gamma \cap U_{m,M} \neq \emptyset \text{ and }\ell(\gamma) \geq \E T_{\bo{0}, \bo{n_m}}-c_1 M^2n^{1/3}\},
    \]
    where $c_1$ is as in \cite[Proposition C.8]{BGZ21}. We have 
    \[
     \mathsf{LargeTF}(m,M,n) \subset \ca \cup \{ T_{\bo{0}, \bo{n}_m}^{U_{m,M}} \leq \E T_{\bo{0}, \bo{n}_m}-c_1 M^2 n^{1/3}\},
    \]
    where $T_{\bo{0}, \bo{n}_m}^{U_{m,M}}$ denotes the maximum passage time over all paths from $\bo{0}$ to $\bo{n}_m$ those are constrained to be in $U_{m,M}$.
    Now, applying \cite[Proposition C.8 and Theorem 4.2 (iii)]{BGZ21} we get
    \[
    \P \left( \mathsf{LargeTF}(m,M,n) \right) \leq e^{-c M^3}+Ce^{-c M},
    \]
    for some $C,c>0$ (depending on $\gamma$). This completes the proof of Lemma.\end{proof}
\begin{proof}[Proof of Lemma \ref{lemma: local transversal fluctuation for half space}]
The Lemma follows from Lemma \ref{lemma: transversal fluctuation in half space} by applying a similar argument as \cite[Proposition 2.1]{BBB23}. As the proof is exactly similar we omit the details here.
\end{proof}

 \bibliography{intersection,main,main2}
 \bibliographystyle{plain}

\end{document}